\documentclass{amsart}

\usepackage[toc,page]{appendix}
\usepackage[headheight=12.0pt]{geometry}
\usepackage[all,cmtip]{xy}
\usepackage{amsmath}
\usepackage{amsfonts}
\usepackage{amssymb}
\usepackage{mathrsfs}   
\usepackage{amsthm}
\usepackage{xcolor}
\usepackage{cite}
\usepackage{stmaryrd}
\usepackage{graphicx}
\usepackage{pgf}
\usepackage{eepic}
\usepackage{pstricks}
\usepackage[all,cmtip]{xy}
\usepackage{epsfig}
\usepackage{fancyhdr}
\usepackage{tikz}
\usepackage[backref=page]{hyperref}

\hypersetup{
     colorlinks   = true,
     citecolor    = red,
     linkcolor = blue,
     urlcolor = purple
}
    
\setlength{\footskip}{30pt}

\setcounter{tocdepth}{1}

\geometry{
 a4paper,
 left=30mm,
 right=30mm,
 top=40mm,
 bottom=40mm,
 }
 \usepackage{mathptmx}  
\linespread{1.2}
 
  \newcommand\imCMsym[4][\mathord]{%
  \DeclareFontFamily{U} {#2}{}
  \DeclareFontShape{U}{#2}{m}{n}{
    <-6> #25
    <6-7> #26
    <7-8> #27
    <8-9> #28
    <9-10> #29
    <10-12> #210
    <12-> #212}{}
  \DeclareSymbolFont{CM#2} {U} {#2}{m}{n}
  \DeclareMathSymbol{#4}{#1}{CM#2}{#3}
}
\newcommand\alsoimCMsym[4][\mathord]{\DeclareMathSymbol{#4}{#1}{CM#2}{#3}}

\imCMsym{cmmi}{124}{\CMjmath}
\imCMsym[\mathop]{cmsy}{113}{\CMamalg}
\imCMsym[\mathop]{cmex}{96}{\CMcoprod}
\alsoimCMsym[\mathop]{cmex}{97}{\CMbigcoprod}
 
 \pagestyle{fancy}
 \fancyhf{}

\theoremstyle{plain}
\newtheorem*{theoremu}{Theorem}
\newtheorem{theorem}{Theorem}[section]

\newtheorem{proposition}[theorem]{Proposition}
\newtheorem{corollary}[theorem]{Corollary}

\newtheorem{lemma}[theorem]{Lemma}

\theoremstyle{definition}

\newtheorem{definition}[theorem]{Definition}

\theoremstyle{remark}
\newtheorem{remark}[theorem]{Remark}
\newtheorem{example}[theorem]{Example}

\newtheorem*{claimu}{Claim}

\newcommand{\Z}{{\mathbb Z}}
\newcommand{\Q}{{\mathbb Q}}

\newcommand{\C}{{\mathbb C}}

\renewcommand{\P}{{\mathbb P}}
\newcommand{\A}{{\mathbb A}}
\newcommand{\D}{{\mathbb D}}

\newcommand{\pow}[1]{\llbracket #1 \rrbracket}

\newcommand{\spec}[1]{\mathrm{Spec}\left(#1\right)}
\newcommand{\cur}[1]{\mathcal{#1}}

\newcommand{\isomto}{\overset{\sim}{\rightarrow}}

\newcommand{\rig}{\mathrm{rig}}
\newcommand{\dR}{\mathrm{dR}}

\newcommand{\tate}[1]{\langle #1 \rangle}

\newcommand{\spa}[1]{\mathrm{Spa}\left(#1\right)}
\newcommand{\spf}[1]{\mathrm{Spf}\left(#1\right)}

\newcommand{\et}{\mathrm{\acute{e}t}}

\title{A homotopy exact sequence for overconvergent isocrystals}

\fancyhead[RO]{C. Lazda, A. P\'al}
\fancyhead[LE]{Homotopy exact sequence}
\fancyfoot[C]{\thepage}

\setlength{\footskip}{30pt}

\begin{document}

\author{Christopher Lazda}
       \address[Lazda]{Dipartimento di Matematica ``Tullio Levi-Civita'' \\
        Via Trieste, 63 \\ 
        35121 Padova \\ 
        Italia}
       \email{lazda@math.unipd.it}
       
\author{Ambrus P\'al}
       \address[P\'al]{Department of Mathematics\\
       Huxley Building, 180 Queen's Gate\\
       London, SW7 2AZ\\
       UK}
       \email{a.pal@imperial.ac.uk}       

\begin{abstract} In this article we prove exactness of the homotopy sequence of overconvergent $p$-adic fundamental groups for a smooth and projective morphism in characteristic $p$. We do so by first proving a corresponding result for rigid analytic varieties in characteristic $0$, following dos Santos \cite{dS15} in the algebraic case. In characteristic $p$, we then proceed by a series of reductions to the case of a liftable family of curves, where we can apply the rigid analytic result.
\end{abstract}

\maketitle

\tableofcontents

\section*{Introduction}

One of the basic principles in `algebraic' approaches to homotopy theory is that a smooth and proper morphism $f:X\rightarrow S$ of schemes (in any characteristic) should behave like a Serre fibration of topological spaces. In particular, for any reasonable definition of homotopy groups, one expects a long exact sequence relating the homotopy groups of the base $S$, the total space $X$, and the fibre $X_s$ over some point $s\in S$. For \'{e}tale homotopy groups, for example, this was proved in \cite{Fri73}, at least after completing away from the residue characteristics of $S$.

While the \'etale fundamental group controls the category of $\ell$-adic local systems on varieties in characteristics different from $\ell$, the same is certainly not true for $p$-adic local systems in characteristic $p$. In this situation, the version of the fundamental group that is usually considered is the one defined using Tannakian duality; this is somewhat analogous to the full pro-algebraic completion of the topological fundamental group $\pi_1(X,x)$ of a complex algebraic variety. In this world of `pro-algebraic homotopy theory' much less is known than in \'etale homotopy theory, even in the case of smooth varieties over the complex numbers.

 For example, it is not completely clear what the correct analogues of the higher homotopy groups are (although see \cite{Toe00} for some work in this direction), and hence even formulating the analogue of the homotopy long exact sequence is problematic. Even if one sticks to the well-understood terms, i.e. to the sequence
 \[ \pi_1(X_s) \rightarrow \pi_1(X) \rightarrow \pi_1(S) \rightarrow \pi_0(X_s) \rightarrow \pi_0(X) \rightarrow \pi_0(S) \rightarrow * \]
 then showing exactness has in general proved to be rather difficult. Over $\C$, using de\thinspace Rham fundamental groups, this follows from `right exactness of the pro-algebraic completion functor', and more generally it was shown for fields of characteristic $0$ using a mixture of algebraic and transcendental methods in \cite{Zha14}. Other results along these sort of lines have been proved in \cite{DPS16,EH06,Laz15}.
 
 A major new approach to these sorts of problems was introduced in \cite{dS15}, where the author showed how to construct push-forwards of certain kinds of `non-linear $\cur{D}$-modules', i.e. stratified schemes over the total space $X$. He then used this construction to give a completely algebraic proof of exactness of the $\pi_1$-part of the sequence, assuming geometric connectedness of  $X_s$ and $S$. One of the crucial insights of his article is that by replacing linear representations with projective representations, one can avoid completely one of the major difficulties in proving exactness of these sorts of `homotopy sequences' (see \S\ref{sec: exacri}).
 
 Inspired by dos Santos' methods, in this article we prove the following result.
 
 \begin{theoremu}[\ref{theo: main2}] Let $f:X\rightarrow S$ be a smooth, projective morphism of smooth varieties over a perfect field $k$ of characteristic $p>0$, with geometrically connected fibres and base. Then the sequence of fundamental groups
 \[ \pi_1^\dagger(X_s) \rightarrow \pi_1^\dagger(X) \rightarrow \pi_1^\dagger(S) \rightarrow 1 \]
 classifying overconvergent isocrystals is exact.
 \end{theoremu}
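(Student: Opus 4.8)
The plan is to deduce Theorem~\ref{theo: main2} from the general Tannakian criterion for exactness of a sequence of affine group schemes (as used in \cite{dS15}), applied to the Tannakian categories $\isoc{S}$, $\isoc{X}$, $\isoc{X_s}$ together with the tensor functors $f^{*}$ and $i_{s}^{*}$ induced by $f$ and by the inclusion $i_{s}\colon X_s\hookrightarrow X$. Writing $G=\pi_1^\dagger(X)$, $Q=\pi_1^\dagger(S)$ and $H=\pi_1^\dagger(X_s)$, exactness of $H\to G\to Q\to 1$ is equivalent to the conjunction of: (a) $f^{*}\colon\isoc{S}\to\isoc{X}$ is fully faithful with essential image closed under subobjects; (b) every object of $\isoc{X}$ whose restriction to $X_s$ is trivial lies in the essential image of $f^{*}$; and (c) every object of $\isoc{X_s}$ is a subquotient of $i_{s}^{*}E$ for some $E\in\isoc{X}$. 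So the first step is to set up this dictionary and reduce the theorem to checking (a), (b), (c).

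Conditions (a) and (b) are cohomological and I expect them to be the easy part. Both should follow from the identity $\mathbf{R}^{0}f_{\rig,*}\cur{O}_{X}\cong\cur{O}_{S}$ in $\isoc{S}$ --- that is, from triviality of relative rigid cohomology in degree $0$ for a smooth projective morphism with geometrically connected fibres, which in turn rests on proper--smooth base change in rigid cohomology together with $H^{0}_{\rig}(X_{\bar{s}}/\bar{K})=\bar{K}$. Full faithfulness of $f^{*}$ is then the projection formula $\mathbf{R}^{0}f_{\rig,*}(f^{*}G)\cong G$; closure under subobjects follows by applying $\mathbf{R}^{0}f_{\rig,*}$ to a sub-isocrystal of $f^{*}G$ and using that the counit $f^{*}\mathbf{R}^{0}f_{\rig,*}E\to E$ exhibits the maximal ``constant'' subobject of $E$; and (b) amounts to observing that, once $i_{s}^{*}E$ is trivial, this counit is an isomorphism on the fibre over $s$, hence everywhere by the fibrewise triviality criterion.

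Condition (c) is the heart of the matter, and here I would carry out the series of geometric reductions announced in the abstract, the endpoint being an appeal to the characteristic~$0$ rigid analytic homotopy exact sequence proved earlier in the paper. Following the key insight of dos Santos, I would first replace a given object of $\isoc{X_s}$ by the associated \emph{projective} object, so that (c) becomes a question about projective bundles equipped with an overconvergent connection --- objects for which the obstruction to extending from the fibre $X_s$ to the total space $X$ is far more tractable. Next, shrinking $S$ and applying weak Lefschetz and relative Lefschetz pencil arguments --- replacing $S$ by a curve through $s$, introducing a pencil on $X$ and blowing up its relative base locus, and using that $\pi_1^\dagger$ of a general hyperplane section surjects onto $\pi_1^\dagger$ of the ambient variety, both fibrewise and relatively --- I would reduce to the case where $X\to S$ is a smooth projective family of geometrically connected curves over a smooth affine curve $S$ over $k$. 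Such a family lifts to a smooth projective family of curves $\mathfrak{X}\to\mathfrak{S}$ over a complete discrete valuation ring with residue field $k$; passing to the associated family of rigid analytic (dagger) spaces over the generic fibre, and using Berthelot's comparison between overconvergent isocrystals on the characteristic~$p$ fibres and the corresponding category of modules with integrable connection on the generic fibre --- compatibly with restriction to fibres --- one transports the already-established characteristic~$0$ statement to obtain (c) for the curve family, and hence in the original situation.

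The \emph{main obstacle} is precisely this third step. Two points seem genuinely delicate: first, the reduction to curves requires a weak Lefschetz theorem for overconvergent fundamental groups (surjectivity after cutting by relative hyperplane sections, and after blowing up), which is not formal in the overconvergent category; and second, the passage to characteristic~$0$ must be performed in families and compatibly with all the structure in sight --- pullback to fibres, the projective/descent data, and the identification of $\pi_1^\dagger$ of a smooth proper curve with the analytic fundamental group of the generic fibre of a lift. Once (c) is in hand, however, exactness of the sequence follows mechanically from (a), (b), (c) and the Tannakian criterion, so the argument really does reduce the characteristic~$p$ theorem to its characteristic~$0$ rigid analytic counterpart.
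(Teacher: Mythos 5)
Your outline has the right geometric ingredients --- weak Lefschetz, the Lefschetz-pencil reduction to a family of curves, liftability of curve families, and the appeal to the characteristic-$0$ result --- but there is a structural gap at the core. Reducing to conditions (a), (b), (c) of the linear Tannakian criterion and then attacking (c) is exactly what the paper is designed to \emph{avoid}: condition (c), that every object of $\mathrm{Isoc}^\dagger(X_s/K)$ embeds in the restriction of an object from $X$, is singled out in \S\ref{sec: exacri} as ``extremely difficult'' and is never verified. Instead, the paper shows that (a) and (b) alone already give \emph{weak} exactness (Theorem \ref{theo: tann}), so the remaining task is only to prove that the image of $\pi_1^\dagger(X_s)\to\pi_1^\dagger(X)$ is a \emph{normal} subgroup (Corollary \ref{cor: rednormal}). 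That normality is then imported from the exact characteristic-$0$ sequence via the surjection $\pi_1^{\mathrm{colim}}\twoheadrightarrow\pi_1^\dagger$ attached to a Monsky--Washnitzer lift (Theorem \ref{theo: heslift}); no object is ever extended from the fibre $X_s$ to $X$. Your remark about passing to the associated projective bundle gestures towards dos Santos' criterion (Theorem \ref{theo: exactds}), but that criterion asks one to prove $G$-invariance of the fixed scheme $\P(V)^L$ --- it is not a device for establishing (c). As written, your plan would stall exactly at (c), because the chain of reductions you describe proves exactness of the sequence (and hence implies (c) only after the fact), not (c) itself.

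A secondary but real issue: you describe (a) and (b) as the easy part, following formally from $\mathbf{R}^{0}f_{\mathrm{rig}*}\cur{O}_X\cong\cur{O}_S$ and proper--smooth base change. In fact the existence of the adjoint push-forward $f_*:\mathrm{Isoc}^\dagger(X/K)\to\mathrm{Isoc}^\dagger(S/K)$ with the expected fibres (Proposition \ref{prop: adjoint}) is one of the genuinely hard technical inputs of the paper --- in the absence of Frobenius structures it requires the full Berthelot--Caro machinery of arithmetic $\cur{D}^\dagger$-modules, overcoherence, and the comparison theorems of \S\ref{sec: adjoint}. Even the ``easy half'' of your reduction is therefore far from formal, and the paper flags this explicitly in its introduction as a serious obstruction to transporting dos Santos' argument directly to the overconvergent setting.
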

 
 If one tries to directly transport dos Santos' construction to the overconvergent setting, one is very quickly confronted by a seemingly insurmountable list of problems and subtleties: even in the linear case the problem of  constructing $\mathbf{R}^0f_*$ of an overconvergent isocrystal (without $F$-structure!) along a smooth and proper morphism is unreasonably difficult (see \S\ref{sec: adjoint}). Instead we proceed in a much more roundabout fashion, advancing via a lengthy chain of reductions, which here we present in reverse order to that found in the body of the article.
 
 First we cut our given morphism $f:X\rightarrow S$ by a sequence of hyperplane sections, which by some diagram chasing and a very weak form of the Lefschetz hyperplane theorem for fundamental groups allows us to reduce to the case of a family of curves. In this case, our morphism arises via pullback from the universal curve
 \[ \xymatrix{  X\ar[r]\ar[d] & \cur{C}_g \ar[d] \\ S \ar[r] & \cur{M}_g }  \]
 and hence by lifting the morphism $S\rightarrow \cur{M}_g$ along some smooth lift of $S$ (at least locally) we can assume that the whole family of smooth projective curves lifts to characteristic $0$. (This is not quite what we do, but this is the basic idea anyway.)
 
In this case we can write the overconvergent fundamental group $\pi_1^\dagger(S)$ as a quotient
\[\varprojlim_\lambda \pi_1^\dR(V_\lambda) \twoheadrightarrow \pi_1^\dagger(S) \]
of the inverse limit of the de\thinspace Rham fundamental groups $\pi_1^\dR(V_\lambda)$ as $V_\lambda$ ranges over strict neighbourhoods of $]S[$ inside the generic fibre of the given lift. Moreover, the same is true for $\pi_1^\dagger(X)$, using neighbourhoods $W_\lambda$, say (again, this is not strictly what we do but this is the essential idea). Now since we can choose these neighbourhoods so that each $W_\lambda \rightarrow V_\lambda$ is smooth and proper, some more diagram chasing allows us to reduce to the following result in rigid analytic geometry.

 \begin{theoremu}[\ref{theo: main1}] Let $f:W\rightarrow V$ be a smooth, projective morphism of smooth analytic varieties over a $p$-adic field $K$, with geometrically connected fibres and base. Let $v\in V(K)$. Then the sequence of fundamental groups
 \[ \pi_1^\dR(W_v) \rightarrow \pi_1^\dR(W) \rightarrow \pi_1^\dR(V) \rightarrow 1 \]
 classifying coherent modules with integrable connection is exact.
 \end{theoremu}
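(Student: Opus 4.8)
The approach is Tannakian, following dos~Santos \cite{dS15}. Enlarging $K$ by a finite extension if necessary --- which changes neither the hypotheses nor the conclusion, since both the de~Rham fundamental groups and the exactness of the sequence descend along such an extension --- we may fix a point $w\in W_v$ over $v$, so that evaluation at $w$ and at $v$ give compatible neutral fibre functors on the Tannakian categories $\mathrm{MIC}(W_v)$, $\mathrm{MIC}(W)$ and $\mathrm{MIC}(V)$, and pull-back along $W_v\hookrightarrow W\xrightarrow{f}V$ induces the sequence $\pi_1^\dR(W_v,w)\to\pi_1^\dR(W,w)\to\pi_1^\dR(V,v)\to1$ in question. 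By the standard criterion for exactness of such a sequence (see \cite{dS15} and the references therein), it is enough to prove three things: \emph{(i)} every object of $\mathrm{MIC}(V)$ pulls back to a trivial object of $\mathrm{MIC}(W_v)$; \emph{(ii)} $f^*\colon\mathrm{MIC}(V)\to\mathrm{MIC}(W)$ is fully faithful with essential image closed under subobjects; and \emph{(iii)} every $(E,\nabla)\in\mathrm{MIC}(W)$ whose restriction to $W_v$ is trivial is isomorphic to $f^*(E_0,\nabla_0)$ for some $(E_0,\nabla_0)\in\mathrm{MIC}(V)$.

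Statements \emph{(i)} and \emph{(ii)} are the formal part. For \emph{(i)}, the morphism $W_v\to V$ factors through the point $v$, so $(f^*E)|_{W_v}\cong E_v\otimes_K\cur{O}_{W_v}$ with its canonical connection. For \emph{(ii)}, the key input is that $f_*\cur{O}_W=\cur{O}_V$, compatibly with the connections, which holds by proper flat base change in rigid analytic geometry together with the hypothesis that the fibres of $f$ are geometrically connected and reduced; full faithfulness of $f^*$ then follows by the projection formula. For closure under subobjects, given $F\subseteq f^*E$ in $\mathrm{MIC}(W)$ one observes that $F$ restricts to a trivial object on every fibre $W_{v'}$ --- a subobject of a trivial object of the Tannakian category $\mathrm{MIC}(W_{v'})$ is trivial --- so the coherent sheaf $f_*F$ has constant fibre dimension, carries the Gauss--Manin connection, hence is locally free (a coherent module with integrable connection on a smooth rigid analytic space is locally free), and the adjunction map $f^*f_*F\to F$, being an isomorphism over every fibre, is an isomorphism since $V$ is connected.

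Statement \emph{(iii)} is the crux. Since a coherent module with integrable connection on $W$ whose relative connection along $f$ has full rank of horizontal sections on every fibre is necessarily pulled back from $V$ --- with $E_0=\mathbf{R}^0 f_* E$ carrying the Gauss--Manin connection --- condition \emph{(iii)} is equivalent to: if $E|_{W_v}$ is trivial then $E|_{W_{v'}}$ is trivial for every $v'\in V$. In rank one this follows from the relative de~Rham Picard of $W/V$: the classes $[E|_{W_{v'}}]$ fit together into a horizontal section of a group object with integrable connection over $V$ that vanishes at $v$, hence vanishes identically by rigidity of horizontal sections. In general, the plan --- this is the heart of dos~Santos' method, to be transposed to the rigid analytic setting --- is to pass from $E$ to its projective bundle $\mathbb{P}(E)\to W$, a \emph{stratified scheme} over $W$ that is proper over $W$ and trivial over $W_v$, and to construct its relative push-forward as a stratified $V$-scheme; it is precisely the properness of $\mathbb{P}(E)$ over $W$ that makes this push-forward behave well, where the linear framing scheme of $E$ (being affine) does not. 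One concludes that $\mathbb{P}(E)$, being relatively trivial over $v$, is relatively trivial over all of $V$, so that $E|_{W_{v'}}$ has scalar monodromy for every $v'$; combined with triviality of $\det E$ on every fibre (the rank-one case) and the finiteness of $n$-torsion in the relative Picard, a covering-space argument over connected $V$ forces $E|_{W_{v'}}$ itself to be trivial for every $v'$.

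The main obstacle I anticipate is the content of the third paragraph: setting up stratified rigid analytic spaces and constructing $\mathbf{R}^0 f_*$ of a stratified scheme in this setting, which requires representability of the relevant relative section functors, a properness statement making the locus of relative triviality open and closed in $V$, and enough control to run the spreading-out argument over a base $V$ that need not be quasi-compact. Most individual ingredients --- proper and flat base change, coherence of higher direct images, the relative Picard, rigidity of horizontal sections, the comparison of coherent sheaves with and without connection, and rigid analytic GAGA on the (algebraic) fibres --- are available or reduce fibrewise to classical facts; the work is in assembling them into family statements over $V$.
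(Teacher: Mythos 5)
Your exactness criterion in the first paragraph is not sufficient, and this is a genuine gap, not a matter of filling in details. What you list as (i), (ii), (iii) amounts to: $p:\pi_1^\dR(W)\to\pi_1^\dR(V)$ faithfully flat, plus the single condition that $q^*E$ is trivial if and only if $E\cong p^*F$. By \cite[Theorem A.1(iii)]{EHS08} these two things, together with the condition that the maximal $\pi_1^\dR(W_v)$-trivial subobject of any $q^*E$ comes from a subobject of $E$, yield only what the paper calls \emph{weak} exactness, i.e.\ $\ker p$ equals the normal closure $q(L)^{\mathrm{norm}}$ of the image of $\pi_1^\dR(W_v)$ --- not $q(L)$ itself. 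A concrete illustration: take $G=\mathrm{SL}_2$, $L=T$ its maximal torus, $A=1$. Here $p$ is trivially faithfully flat, $q^*$ of the trivial rep is trivial, and the only $G$-representations with $q^*E$ trivial are the trivial ones; so (i), (ii), (iii) all hold, yet $L\to G\to A\to 1$ is manifestly not exact. The condition you have dropped is the genuinely difficult one (EHS condition (3): every $\pi_1^\dR(W_v)$-representation embeds in a restricted $\pi_1^\dR(W)$-representation), and avoiding it is precisely the reason dos~Santos introduces, and the paper adopts, the \emph{projective} exactness criterion: for each $E\in\mathrm{MIC}(W)$, the fixed scheme $\P(E_w)^{\pi_1^\dR(W_v,w)}$ should be a $K$-subscheme of $\P(E_w)$ invariant under $\pi_1^\dR(W,w)$. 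The paper establishes weak exactness (Lemma~\ref{lemma: dradj}), then upgrades it via Proposition~\ref{prop: mix} by showing this invariance, using the closed immersion $f^*f_{\dR*}\P(E)\hookrightarrow\P(E)$ together with stability under the stratification (Theorem~\ref{theo: inva}) and Tannakian reconstruction for polarisable stratified schemes (Theorem~\ref{theo: tanninv}).

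Your third paragraph does gesture at the right object --- the push-forward of the stratified projective bundle $\P(E)$ --- but it is aimed at the wrong target. You use it to try to prove your (iii), i.e.\ that triviality on one fibre propagates to all fibres, via ``$\P(E)$ trivial on all fibres, plus $\det E$ trivial, plus a torsion argument.'' This route both fails to close the gap above (since (iii) is not the missing condition) and has its own unaddressed obstruction: a $\P^{n-1}$-bundle with stratification over $V$ need not be the projectivisation of a rank-$n$ module with connection (a Brauer-type obstruction), so passing from projective triviality on every fibre back to linear triviality requires more than the fibrewise $n$-torsion count suggests. What the paper actually proves is strictly weaker and exactly calibrated to the projective criterion: one does not need $E$ to be pulled back from $V$ when $E|_{W_v}$ is trivial, only that the closed subscheme of $\pi_1^\dR(W_v)$-fixed points in $\P(E_w)$ is stable under the larger group $\pi_1^\dR(W,w)$, which follows once $f^*f_{\dR*}\P(E)\hookrightarrow\P(E)$ is shown to be a stratified closed immersion.

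You are right that the technical centre of gravity lies in constructing $f_{\dR*}$ for stratified projective $W$-varieties in the analytic category; the paper's method is to reduce via relative rigid-analytic GAGA to the algebraic situation over an affinoid, apply dos~Santos' representability and tangency results there, and then check stability under the full (absolute) stratification on formal completions $\mathrm{Spf}(\widehat{\cur{O}}_{V,v})$ at rigid points, where one can again invoke the algebraic arguments (now over a non-smooth but formally smooth base, which requires rechecking rather than citing \cite[Proposition 13.4]{dS15}). So the outline in your last paragraph is compatible with the paper's strategy in spirit, but the logical backbone --- which exactness criterion you feed this machinery into --- needs to be replaced.
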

 
 The point is that now we have reduced to a statement solely concerning smooth projective morphisms of analytic $K$-varieties, with no reference to tubes or overconvergence. We are therefore in a situation where we really can directly apply dos Santos' ideas and arguments, as essentially all of the difficulties we originally faced have disappeared. This is now what we do: the proof of this `de\thinspace Rham' homotopy exact sequence consists entirely of translating dos Santos' proof from \cite{dS15} into the analytic context. 
 
In actual fact, we do much less than this. Rather than reprove all of dos Santos' results on `push-forwards' of stratified schemes for rigid analytic varieties, we instead use various tricks to be able to reduce to cases where we can in fact \emph{apply} his results. The basic idea is that it in fact suffices to show that for a stratified variety $Z$ over $W$, the unit map $f^*f_{\dR*} Z \rightarrow Z$ for the \emph{relative} push-forward is a closed immersion, and the image is stable under the stratification on $Z$. But now, by relative rigid analytic GAGA, these relative push-forwards simply arise as the analytification of those considered in \cite{dS15}. Moreover, that the image is stable under the stratification can be checked after passing to the completed local ring at any rigid point, and hence to the various infinitesimal neighbourhoods of this point. The situation is now completely algebraic over the ground field $K$, and so once more we can use dos Santos' results. In fact a little care is needed, since these infinitesimal neighbourhoods will not be smooth over $K$, so \emph{a priori} the results of \cite{dS15} do not apply. However, it is straightforward to show that the \emph{proofs} of these results apply in the situation we are interested in.

Finally, in \S\ref{sec: app}, we discuss some applications of the homotopy exact sequence for overconvergent fundamental groups. First of all we prove a `Lefschetz' type theorem, stating that if $Y\subset X$ is a smooth hyperplane section inside a smooth projective variety, then the induced map
\[ \pi_1^\dagger(Y)\rightarrow \pi_1^\dagger(X)  \]
on $p$-adic fundamental groups is surjective. We are then able to use this to show that when the ground field is algebraically closed, and $X$ is smooth and projective, then there is a canonical isomorphism
\[ \pi_0(\pi_1^\dagger(X)) \cong \pi_1^\et(X)\]
between the component group of the $p$-adic fundamental group and the pro-finite \'etale fundamental group. This generalises a result of Crew \cite[Proposition 4.4]{Cre92}, in which objects were assumed to have Frobenius structures.
  
\subsection*{Acknowledgements}

C. Lazda was supported by a Marie Curie fellowship of the Istituto Nazionale di Alta Matematica ``F. Severi''. A. P\'al was partially supported by the EPSRC Grant P36794. The authors would like to thank Imperial College London, Universit\`a Degli Studi di Padova, the Centre International de Rencontres Mathematiques in Marseille, and the Mittag-Leffler Institute in Stockholm for hospitality during the writing of this article. It should be immediately clear that we owe an enormous intellectual debt to J.P. dos Santos, and we would like to express out gratitude to him for the work done in \cite{dS15}, pointing the way towards the solution to a problem we had been considering for a number of years.

\subsection*{Notations and conventions}

\begin{itemize}
\item We will denote by $k$ a perfect field of characteristic $p>0$, $\cur{V}$ a complete DVR with residue field $k$ and fraction field $K$ of characteristic $0$. We will let $\varpi$ denote a choice of uniformiser for $\cur{V}$.
\item An algebraic variety over $k$ (resp. $K$) will mean a separated scheme of finite type, the category of these will be denoted  $\mathbf{Var}_{k}$ (resp. $\mathbf{Var}_K$). If $V$ is an algebraic variety over either $k$ or $K$, we will denote by $\mathbf{Var}_V$ the slice category of varieties over $V$.
\item An analytic variety over $K$ will mean an adic space, separated and locally of finite type over $\spa{K,\cur{V}}$. Since all rigid spaces will be locally of finite type over $K$, we may, without ambiguity, denote an affinoid adic space $\spa{A,A^+}$ simply by $\spa{A}$. We will let $\mathbf{Rig}_{K}$ denote the category of analytic varieties over $K$, and for any such $V$ the slice category will be denoted $\mathbf{Rig}_V$. The analytification of an algebraic variety over $K$ will always be considered as an adic space. Throughout, we will implicitly use \cite[Theorem II.A.5.2]{FK13} to apply the results of \cite{FK13} to objects of $\mathbf{Rig}_K$.
\item If $Y$ is a $k$-variety, we will denote by $\mathrm{Isoc}^\dagger(Y/K)$ the category of overconvergent isocrystals on $Y/K$.
\item A closed subgroup of an affine group scheme will always mean a closed sub-scheme that is also a subgroup, a surjective homomorphism will be a group scheme homomorphism which is faithfully flat.
\item Unadorned tensor or fibre products will be over $k,K$ or $\cur{V}$, it will be clear which from the context. Sometimes, in order to avoid confusion, we will denote the fibre product of a diagram $X\overset{f}{\rightarrow} Z \overset{g}{\leftarrow} Y$ by one of $X\times_{f,Z,g} Y$, $X\times_{f,Z}Y$ or $X\times_{Z,g}Y$, depending on which structure morphism needs clarifying.
\end{itemize}

\section{The homotopy sequence for analytic \texorpdfstring{$K$}{K}-varieties}

The first goal of this article will be the proof of a homotopy exact sequence for certain classes of families of smooth analytic $K$-varieties. To start with, we will need to define the de\thinspace Rham fundamental group of such spaces. So let $V$ be an analytic variety over $K$. 

\begin{proposition} \label{prop: tann1} Assume that $V$ is smooth, geometrically connected, and admits a rational point $v\in V(K)$. Then the category $\mathrm{MIC}(V/K)$ of coherent $\cur{O}_V$-modules with integrable connection is neutral Tannakian over $K$, with fibre functor $v^*$.
\end{proposition}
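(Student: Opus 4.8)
The plan is to verify the axioms of a neutral Tannakian category for $\mathrm{MIC}(V/K)$ with the functor $v^*$ as fibre functor. First I would observe that $\mathrm{MIC}(V/K)$ is an abelian category: since $V$ is smooth over $K$ (so $\cur{O}_V$ is coherent and $V$ is locally Noetherian), the category of coherent $\cur{O}_V$-modules is abelian, and an integrable connection is the same as a left module structure over the ring of differential operators $\cur{D}_{V/K}$, so kernels and cokernels of horizontal maps again carry natural connections. The tensor product $(\cur{E}_1,\nabla_1)\otimes(\cur{E}_2,\nabla_2)$ with the Leibniz connection makes $\mathrm{MIC}(V/K)$ symmetric monoidal, with unit $(\cur{O}_V,d)$; internal Homs (and in particular duals) exist because a coherent module with connection on a smooth variety is automatically a vector bundle, hence locally free of finite rank, so $\cur{H}\!om_{\cur{O}_V}(\cur{E}_1,\cur{E}_2)$ is again coherent and carries the natural connection. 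The $K$-linearity of all Hom-spaces and the identity $\mathrm{End}(\cur{O}_V,d)=H^0_{\dR}(V/K)=K$ (using geometric connectedness, so that global horizontal sections of $\cur{O}_V$ are constants) complete the abstract tensor-category axioms.

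Next I would check that $v^*$ is a fibre functor, i.e. an exact, faithful, $K$-linear tensor functor $\mathrm{MIC}(V/K)\to \mathrm{Vec}_K$. Tensor-compatibility and $K$-linearity are formal. Exactness: an object of $\mathrm{MIC}(V/K)$ is locally free, and a short exact sequence in $\mathrm{MIC}(V/K)$ is in particular a short exact sequence of vector bundles, which stays exact after the pullback $v^*$ (equivalently, after restriction to the point $v$); one should note that kernels and cokernels computed in $\mathrm{MIC}(V/K)$ agree with those computed as $\cur{O}_V$-modules precisely because everything is locally free. Faithfulness is the key point and I expect it to be the main obstacle: one must show that if a horizontal morphism $\varphi\colon(\cur{E}_1,\nabla_1)\to(\cur{E}_2,\nabla_2)$ vanishes at $v$, then $\varphi=0$. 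Equivalently, an object with $v^*\cur{E}=0$ is zero. This is where geometric connectedness of $V$ is used in an essential way: the rank of a coherent module with integrable connection on a connected smooth analytic variety is locally constant, hence constant, so $v^*\cur{E}=0$ forces $\cur{E}=0$. The cleanest route is to show that the locus where $\varphi$ has maximal rank is both open (by semicontinuity) and closed (by the connection: the kernel and image of $\varphi$ are sub-objects in $\mathrm{MIC}(V/K)$, hence locally free, hence of locally constant rank), and then invoke connectedness together with the point $v$ to propagate the vanishing globally.

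Finally I would assemble these facts: an essentially small abelian $K$-linear rigid symmetric monoidal category with $\mathrm{End}(\mathbf{1})=K$ admitting an exact faithful $K$-linear tensor functor to $\mathrm{Vec}_K$ is neutral Tannakian by Deligne--Milne's reconstruction theorem. The one genuinely analytic input — as opposed to a verbatim transcription of the classical algebraic argument — is that a coherent module with integrable connection on a smooth \emph{adic} space is locally free and of locally constant rank; this is local on $V$, so it follows from the corresponding statement for smooth affinoid algebras, where one can argue exactly as in the algebraic case using that $\mathrm{MIC}$-objects are finitely presented over a Noetherian ring and that the connection identifies nearby fibres. Geometric (rather than just topological) connectedness is needed so that this rank argument is insensitive to base change to $\bar K$, which is implicit in the requirement that $v^*$ land in $\mathrm{Vec}_K$ and detect isomorphisms.
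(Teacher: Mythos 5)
Your proposal is correct and reaches the same two load-bearing facts as the paper --- that every object of $\mathrm{MIC}(V/K)$ is locally free, and that $\mathrm{End}_{\mathrm{MIC}}(\cur{O}_V)=K$ --- but it assembles them differently. The paper applies \cite[Ch.~II, Prop.~1.20]{DMOS82}, which spares one from verifying rigidity outright: after establishing that $v^*$ is faithful, $K$-linear, exact and tensor, and that objects with one-dimensional fibre are line bundles (hence invertible), the criterion reduces the whole statement to $K\isomto H^0_{\dR}(V/K)$. You instead verify each Tannakian axiom by hand (abelianness via the $\cur{D}_{V/K}$-module reformulation, rigidity via existence of duals for locally free objects) and then apply the general reconstruction criterion. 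Both routes are valid; the paper's is shorter because DMOS~1.20 lets one bypass constructing internal Homs. The one place where you are noticeably thinner than the paper is the local freeness step, which is the genuine analytic content of the proposition: the paper makes it precise by reducing to the completed local ring $\widehat{A}_{\mathfrak m}\cong K\pow{x_1,\ldots,x_n}$ (using \'etale coordinates after possibly enlarging $K$), observing that the connection induces a formal integrable connection over this power series ring, and invoking \cite[Prop.~8.9]{Kat70}. Your phrase ``argue exactly as in the algebraic case\ldots the connection identifies nearby fibres'' gestures at this but elides the actual reduction; in the analytic category it is not immediate that local freeness can be checked at closed points, and the passage through $\widehat{A}_{\mathfrak m}$ is exactly what makes the algebraic (formal) argument available. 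Similarly, the claim that ``global horizontal sections of $\cur{O}_V$ are constants'' is asserted parenthetically; the paper derives $H^0_{\dR}(V/K)=K$ from the retraction supplied by $v^*$ together with the absence of nontrivial idempotents in $\Gamma(V,\cur{O}_V)$, which is worth making explicit since it is here that connectedness and the rational point actually enter.
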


\begin{proof}
We first claim that any coherent module with integrable connection is locally free. Indeed, this question is local, and we may assume $V=\spa{A}$ to be affinoid. In particular, $E$ comes from a coherent sheaf $E^a$ on $\spec{A}$ and it suffices to prove that $E^a$ is locally free. But this may be checked after passing to the completed local ring $\widehat{A}_\mathfrak{m}$ at any closed point $\mathfrak{m}\in \spec{A}$, which by enlarging $K$ can be assumed to be $K$-valued. Now choosing \'{e}tale co-ordinates $\spa{A}\rightarrow \D^n_K$ in some neighbourhood of this given $K$-point induces an isomorphism $\widehat{A}_\mathfrak{m}\cong K\pow{x_1,\ldots,x_n}$. Moreover the integrable connection on $E$ induces a formal integrable connection on $E^a\otimes_A \widehat{A}_\mathfrak{m}$. Hence we may apply \cite[Proposition 8.9]{Kat70}.

It therefore follows that
\[v^*: \mathrm{MIC}(V/K) \rightarrow \mathrm{Vec}_K  \]
is a faithful, $K$-linear, exact tensor functor, and since $V$ is connected, we can see that if $v^*(E)$ has dimension $1$, then $E$ is a line bundle. Hence applying \cite[Ch. II, Proposition 1.20]{DMOS82} it suffices to prove that the natural map
\[ K\rightarrow H^0_\mathrm{dR}(V/K):=H^0(V,\Omega^*_{V/K}) \]
is an isomorphism. Applying $v^*$ we obtain a retraction
\[ H^0_\mathrm{dR}(V/K)\cong \mathrm{End}_{\mathrm{MIC}(V/K)}(\cur{O}_V) \rightarrow \mathrm{End}_K(v^*\cur{O}_V)=K  \]
of this map. In particular, if $H^0_\mathrm{dR}(V/K)$ were strictly bigger than $K$, then $\Gamma(V,\cur{O}_V)$ would contain a non-trivial idempotent element, contradicting the connectedness of $V$.
\end{proof}

\begin{definition} Let $(V,v)$ be as in Proposition \ref{prop: tann1}. Then we define the de\thinspace Rham fundamental group $\pi_1^\mathrm{dR}(V,v)$ of $V$ to be the Tannaka dual of $\mathrm{MIC}(V/K)$ with respect to the fibre functor
\[ v^*: \mathrm{MIC}(V/K) \rightarrow \mathrm{Vec}_K . \]
\end{definition}

Now let $f:W\rightarrow V$ be a proper morphism of analytic $K$-varieties. Recall from \cite{Con06} that a line bundle $\cur{L}$ on $W$ is said to be $f$-ample if it is so on each fibre $W_v$ over a rigid point $v\in V$. In other words, for each rigid point $v\in V$, some tensor power $\cur{L}|_{W_v}^{\otimes{n}}$ defines a closed immersion $W_v\hookrightarrow \P^{N,\mathrm{an}}_{K(v)}$.

\begin{definition} We say that a proper morphism $f:W\rightarrow V$ of analytic $K$-varieties is projective if $W$ admits an $f$-ample line bundle.
\end{definition}

\begin{remark}
With this definition, a projective morphism admits a closed immersion $W\rightarrow \P^{N,\mathrm{an}}_V$ locally on the base $V$, by \cite[Theorem 3.2.7]{Con06}. Such an embedding need not exist globally, although it will if the base is affinoid, or itself projective over $\spa{K}$. Note also that with this definition, a composition of projective morphisms is projective, but projectivity is not necessarily local on the base.
\end{remark}

Suppose that $f:W\rightarrow V$ is a morphism of smooth, geometrically connected $K$-varieties, $w\in W(K)$ is a $K$-valued point, and set $v=f(w)$. If the fibre $W_v$ is also smooth and geometrically connected then we call the sequence
\[ \pi_1^\mathrm{dR}(W_v,w)\rightarrow \pi_1^\mathrm{dR}(W,w)\rightarrow \pi_1^\mathrm{dR}(V,v)\rightarrow 1  \]
of affine group schemes the \emph{homotopy sequence} associated to the pair $(f,w)$. Then the main result of the first part of this article is the following.

\begin{theorem} \label{theo: main1} Let $f:W\rightarrow V$ be a smooth projective morphism of smooth analytic $K$-varieties, with geometrically connected fibres and base, and let $w\in W(K)$. Then the homotopy sequence of the pair $(f,w)$ is exact.
\end{theorem}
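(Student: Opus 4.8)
The strategy is to translate, essentially line by line, the arguments of dos Santos \cite{dS15} into the rigid analytic setting. Write $i\colon W_v\hookrightarrow W$ for the inclusion of the fibre. Since $w\in W(K)$ and $v=f(w)\in V(K)$, and $W$, $V$ and $W_v$ are all smooth and geometrically connected, all three groups in the homotopy sequence are defined; moreover $W_v\to W\to V$ factors through $v$, so that $i^*f^*\cur{E}$ is a trivial object of $\mathrm{MIC}(W_v/K)$ for every $\cur{E}\in\mathrm{MIC}(V/K)$. By the Tannakian criterion for exactness of homotopy sequences of affine group schemes (see \cite{dS15} for the form we use), it therefore suffices to prove the following two statements.
\begin{enumerate}
\item[(a)] The functor $f^*\colon\mathrm{MIC}(V/K)\to\mathrm{MIC}(W/K)$ is fully faithful, with essential image stable under subobjects; equivalently, $\pi_1^\dR(W,w)\to\pi_1^\dR(V,v)$ is faithfully flat.
\item[(b)] An object $\cur{F}\in\mathrm{MIC}(W/K)$ satisfies $i^*\cur{F}\cong\cur{O}_{W_v}^{\oplus n}$ for some $n\geq 0$ if and only if $\cur{F}\cong f^*\cur{E}$ for some $\cur{E}\in\mathrm{MIC}(V/K)$.
\end{enumerate}

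For (a), the essential geometric input is the identity $f_*\cur{O}_W=\cur{O}_V$. This follows from the coherence and base change properties of cohomology along proper morphisms of rigid spaces \cite{FK13}, together with the fact that the fibres of $f$ are geometrically connected and reduced (being smooth), so that $H^0(W_{v'},\cur{O}_{W_{v'}})=K(v')$ for each rigid point $v'$ of $V$; combined with the projection formula it yields full faithfulness of $f^*$. For stability under subobjects, let $\cur{G}\hookrightarrow f^*\cur{E}$ be a subobject in $\mathrm{MIC}(W/K)$; by Proposition \ref{prop: tann1} all objects are locally free, so $\cur{G}$ is a subbundle of $f^*\cur{E}$ of some rank $r$. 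Its restriction to each fibre $W_{v'}$ is a subconnection of the trivial connection $(f^*\cur{E})|_{W_{v'}}$, hence --- $W_{v'}$ being proper and connected --- equals $T_{v'}\otimes\cur{O}_{W_{v'}}$ for some $r$-dimensional subspace $T_{v'}\subseteq\cur{E}_{v'}$. Thus the classifying morphism $W\to\mathrm{Grass}_r(\cur{E})$ of $\cur{G}\subseteq f^*\cur{E}$ is constant along the fibres of $f$, and since $f$ is proper with geometrically connected fibres and $f_*\cur{O}_W=\cur{O}_V$ it descends to a section $V\to\mathrm{Grass}_r(\cur{E})$, that is, to a subbundle $\cur{E}'\subseteq\cur{E}$ with $f^*\cur{E}'=\cur{G}$ inside $f^*\cur{E}$. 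A further application of $f_*\cur{O}_W=\cur{O}_V$ shows that $\nabla_{\cur{E}}$ preserves $\cur{E}'$, and then $\cur{G}=f^*(\cur{E}',\nabla_{\cur{E}'})$ lies in the essential image of $f^*$.

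The heart of the proof is the ``only if'' direction of (b), where dos Santos' decisive device enters: one replaces the linear object $\cur{F}$ by an associated \emph{non-linear} one, in the spirit of his use of projective representations. Concretely, attach to $\cur{F}$ a stratified $W$-variety $Z$ --- for instance the projective bundle $\P(\cur{F})$, or a suitable torsor --- and prove that the relative push-forward $f_{\dR*}Z$ exists as a stratified $V$-variety, and that the unit map $f^*f_{\dR*}Z\to Z$ is a closed immersion whose image is stable under the stratification of $Z$. Since $f$ is smooth and projective, one may work locally on $V$ and invoke relative rigid analytic GAGA \cite{Con06} to identify the pair $(f\colon W\to V,\,Z)$ with the analytification of a smooth projective morphism of Noetherian schemes together with a stratified variety over the total space; to the latter the constructions and results of \cite{dS15} apply directly. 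Analytifying back produces $f_{\dR*}Z$, and the unit map is a closed immersion because it is the analytification of the closed immersion of \cite{dS15}. Stability of its image under the stratification of $Z$ may be checked after passing to the completed local ring $\widehat{\cur{O}}_{W,w'}\cong K\pow{x_1,\ldots,x_n}$ at a rigid point $w'$ of $W$, and so over the infinitesimal neighbourhoods of $w'$; there the situation is algebraic over $K$, and although $\spec{K\pow{x_1,\ldots,x_n}}$ is not of finite type over $K$, the \emph{proofs} of the relevant statements in \cite{dS15} carry over verbatim (compare the appeal to \cite[Proposition 8.9]{Kat70} in Proposition \ref{prop: tann1}). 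Granting all this: if $i^*\cur{F}$ is trivial then $f_{\dR*}Z$ has a point above $v$, and a descent argument using the stratification on $f_{\dR*}Z$ together with the connectedness of $V$ shows that $f_{\dR*}Z$ is non-empty over all of $V$; consequently the closed immersion $f^*f_{\dR*}Z\hookrightarrow Z$ of stratified $W$-varieties is an isomorphism, so that $Z$, and with it $\cur{F}$, descends along $f$.

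The main obstacle is precisely the construction and control of these relative push-forwards. As explained in the introduction, for a coherent module with integrable connection and no Frobenius structure one cannot expect the linear push-forward $\RdR{0}\cur{F}$ to be coherent or to behave well, which is exactly why the detour through non-linear objects is indispensable; the remaining work peculiar to the analytic setting --- passing through relative GAGA, and descending to the non-smooth, non-finite-type infinitesimal neighbourhoods --- is, as indicated there, essentially a matter of translation.
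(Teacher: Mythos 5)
There is a genuine gap, and it is structural: you have mis-identified what the non-linear machinery is for, and you claim a Tannakian criterion that does not give what you need.

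Your conditions (a) and (b) amount, respectively, to faithful flatness of $\pi_1^\dR(W,w)\to\pi_1^\dR(V,v)$ and to condition (1) of the Esnault--Hai--Sun criterion (Theorem \ref{theo: exactesnault}). Together with the ``maximal trivial subobject'' condition (2), these give only \emph{weak} exactness in the sense of the paper, i.e.\ $\ker p$ equals the normal closure $q(L)^{\mathrm{norm}}$; they do not show $\ker p = q(L)$. The hard extra ingredient is condition (3) of the EHS criterion --- that every object of $\mathrm{Rep}(\pi_1^\dR(W_v,w))$ embeds into one restricted from $W$ --- or, equivalently, that the image of $\pi_1^\dR(W_v,w)$ in $\pi_1^\dR(W,w)$ is already normal. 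That is precisely what the ``projective'' criterion (Theorem \ref{theo: exactds}, dos Santos' Lemma 4.3) is designed to replace: one must show that for every $E\in\mathrm{MIC}(W/K)$ the fixed locus $\P(E_w)^{\pi_1^\dR(W_v,w)}$ is a $\pi_1^\dR(W,w)$-invariant closed subscheme of $\P(E_w)$. Nothing in (a) or (b) addresses this, so even a complete proof of (a) and (b) would not give the theorem.

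Compounding this, the non-linear push-forward machinery is deployed in your proposal to prove (b), but (b) needs none of it: the ``only if'' direction is exactly the statement that the linear adjunction $f^*f_{\dR*}\cur F\to\cur F$ is an isomorphism when $\cur F|_{W_v}$ is trivial, and since that can be checked on fibres it is immediate. And the argument you sketch via $Z=\P(\cur F)$ is not sound: from ``$f_{\dR*}Z$ is non-empty over all of $V$'' you cannot conclude that the unit $f^*f_{\dR*}Z\hookrightarrow Z$ is an isomorphism, and even if $\P(\cur F)$ did descend as a stratified $V$-variety, this would not imply $\cur F$ descends (this is the Brauer obstruction: a projective bundle with connection can descend without the underlying vector bundle doing so). The correct division of labour, as in the paper, is: establish weak exactness by purely linear means (Lemma \ref{lemma: dradj}); separately show, using the push-forward $f_{\dR*}\P(E)$ and stability of $f^*f_{\dR*}\P(E)\hookrightarrow\P(E)$ under the stratification (Theorem \ref{theo: inva}), that the fibre $\P(E_w)^{\pi_1^\dR(W_v,w)}$ is $\pi_1^\dR(W,w)$-invariant; and then combine via Proposition \ref{prop: mix}. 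The GAGA reduction, the passage to completed local rings and infinitesimal neighbourhoods, and the appeal to dos Santos' proofs over non-smooth bases that you outline are all genuinely part of the argument --- but they belong to the proof of Theorem \ref{theo: inva}, not to (b).
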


\section{Exactness criteria and polarisable \texorpdfstring{$G$}{G}-varieties} \label{sec: exacri}

The strategy to prove Theorem \ref{theo: main1} is essentially to translate dos Santos' proof of exactness of the homotopy sequence in \cite{dS15} from the algebraic to the analytic setting. The need to work analytically will present us with several difficulties, and consequently at many points we will prove weaker results, and with extra hypotheses, than those obtained in \cite{dS15}. In order to be able to get away with this, we will need to combine the `projective' criteria for exactness of a sequence of affine group schemes discussed in \cite[\S4]{dS15} with more traditional `linear' versions considered for example in \cite[Appendix A]{EHS08}. To begin with then, let us quickly recall how these criteria work.

\begin{theorem}[\cite{dS15}, Lemma 4.3] \label{theo: exactds} Let
\[ L \overset{q}{\rightarrow} G \overset{p}{\rightarrow} A \rightarrow 1\]
be a sequence of affine group schemes such that $p$ is faithfully flat. Then the sequence is exact if and only if for all $V\in \mathrm{Rep}(G)$ the inclusion
\[ \P(V)^{\ker p}(K) \subset \P(V)^{L}(K) \]
of $K$-points on the fixed schemes is an equality.
\end{theorem}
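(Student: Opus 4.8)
The plan is to argue directly with Tannaka duality, reducing the statement to Chevalley's theorem on stabilisers of lines together with faithfully flat descent for fixed-point subschemes; in particular I would not pass through a separate ``linear'' criterion, although the idea of \cite{dS15} of working with projective rather than linear representations is exactly what makes the argument run. Recall that exactness of the sequence means that $p\circ q$ is trivial and that the induced homomorphism $L\to\ker p$ is faithfully flat; since any homomorphism of affine group schemes over $K$ factors as a faithfully flat map onto its scheme-theoretic image followed by a closed immersion, and a faithfully flat closed immersion is an isomorphism, this is equivalent to: $p\circ q=1$ and the scheme-theoretic image $\overline{q(L)}$, then a closed subgroup of $\ker p$, equals $\ker p$. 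I would also record the elementary fact that if an affine group scheme $H$ acts on a separated scheme $Z$ and $H'\to H$ is faithfully flat, then $Z^{H'}=Z^{H}$ as closed subschemes (a morphism out of $H$ is constant precisely when its restriction along the epimorphism $H'\to H$ is); applied to $L\twoheadrightarrow\overline{q(L)}$ this gives $\P(V)^{L}=\P(V)^{\overline{q(L)}}$ for every $V\in\mathrm{Rep}(G)$. The ``only if'' direction is then immediate: if the sequence is exact then $\overline{q(L)}=\ker p$, so $\P(V)^{L}=\P(V)^{\ker p}$ already as schemes, a fortiori on $K$-points.

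For the ``if'' direction, assume the displayed equality of $K$-points holds for all $V$. First I would check that $p\circ q=1$. Applying the hypothesis to $V=p^{*}W$ for an arbitrary $W\in\mathrm{Rep}(A)$: here $\ker p$ acts trivially, so $\P(V)^{\ker p}=\P(V)$ and hence $\P(V)^{L}(K)=\P(V)(K)$; as $K$ is infinite, its rational points are Zariski dense in the integral scheme $\P(V)\cong\P^{N}_{K}$, and $\P(V)^{L}$ is closed, so $\P(V)^{L}=\P(V)$. Thus $L$ acts through scalars on $p^{*}W$ for every $W$, i.e.\ the composite $L\xrightarrow{p\circ q}A\to\mathrm{PGL}(W)$ is trivial for all $W\in\mathrm{Rep}(A)$; since $\bigcap_{W}\ker\bigl(A\to\mathrm{PGL}(W)\bigr)$ is trivial (an element acting by a scalar on every representation acts by the same scalar on all of them, by considering direct sums, hence by $1$, by considering the trivial representation), this forces $p\circ q=1$. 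So $H:=\overline{q(L)}$ is a closed subgroup of $N:=\ker p$, and it remains to prove $H=N$.

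Suppose $H\subsetneq N$. Writing $G$ as an inverse limit of affine algebraic quotients $G_{i}$ and letting $H_{i},N_{i}$ denote the images of $H,N$ in $G_{i}$, we must have $H_{i}\subsetneq N_{i}$ for some $i$. Chevalley's theorem, applied in the algebraic group $G_{i}$, produces a finite-dimensional $G_{i}$-representation $V$ and a $K$-rational line $\ell\subset V$ with $\mathrm{Stab}_{G_{i}}([\ell])=H_{i}$ — the construction (generate the ideal of $H_{i}$ by a finite-dimensional $G_{i}$-submodule and take a suitable exterior power) being carried out entirely over $K$. Regard $V$ as a $G$-representation via $G\to G_{i}$; then the $G$-action on $\P(V)$ factors through $G_{i}$, so $\mathrm{Stab}_{G}([\ell])$ is the preimage of $H_{i}$, which contains $H$ but not $N$ (as $N$ maps onto $N_{i}\not\subseteq H_{i}$). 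Hence $[\ell]\in\P(V)(K)$ lies in $\P(V)^{L}(K)=\P(V)^{H}(K)$ but not in $\P(V)^{\ker p}(K)$, contradicting the assumed equality. Therefore $H=N$ and the sequence is exact. One could instead deduce exactness from the linear criterion of \cite{EHS08} after relating $q(L)$-stable lines to fixed vectors, but this direct route seems cleaner.

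The main obstacle is this last step: producing a single representation of $G$, and a rational point of its projectivisation, that separates $q(L)$ from $\ker p$. This is precisely where it is essential to work with stabilisers of lines rather than with fixed vectors — Chevalley's theorem controls the former — and where some care is required because $G$ (a de\thinspace Rham or overconvergent fundamental group) is typically not of finite type, so that Chevalley's theorem must be invoked on an algebraic quotient of $G$ rather than on $G$ itself, with attention to keeping the separating line $K$-rational.
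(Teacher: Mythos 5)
The paper states this result as a direct citation of \cite[Lemma~4.3]{dS15} and provides no proof of its own, so there is no internal argument to compare against; your proposal has to stand on its merits, and it does. The structure is sound: exactness is correctly reformulated as $p\circ q=1$ together with $q(L)=\ker p$ (image meaning scheme-theoretic image, as in the paper's conventions); the observation that $\P(V)^{L}=\P(V)^{q(L)}$ because $L\twoheadrightarrow q(L)$ is faithfully flat is correct and handles the ``only if'' direction immediately; and in the ``if'' direction the reduction of $p\circ q=1$ to density of $K$-rational points of $\P^{N}_{K}$ (valid because $K$ has characteristic $0$) and the $W\oplus\mathbf{1}$ trick are both fine. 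The key step --- passing to an algebraic quotient $G_{i}$ where $\overline{q(L)}$ and $\ker p$ have distinct images (which works because a closed subgroup of a pro-algebraic $G$ is the limit of its images in the $G_{i}$) and then invoking Chevalley's theorem to manufacture a $K$-rational line whose stabiliser separates them --- is exactly where the ``projective rather than linear'' philosophy from \cite{dS15} enters, and you execute it correctly. One small remark: as the paper phrases the criterion, the inclusion $\P(V)^{\ker p}(K)\subseteq\P(V)^{L}(K)$ is already taken for granted (equivalently $p\circ q=1$ is implicitly part of the setup), so your rederivation of $p\circ q=1$ from the hypothesis, while correct, is not strictly needed; it does show your formulation is marginally more general.
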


\begin{theorem}[\cite{EHS08}, Theorem A.1(iii)] \label{theo: exactesnault} Let 
\[ 1\rightarrow L \overset{q}{\rightarrow} G \overset{p}{\rightarrow} A \rightarrow 1 \]
be a sequence of affine group schemes, such that $q$ is a closed immersion and $p$ is faithfully flat. Then the sequence is exact if and only if the following three conditions hold.
\begin{enumerate} 
\item If $V\in \mathrm{Rep}(G)$, then $q^*(V)$ is trivial in $\mathrm{Rep}(L)$ if and only if $V\cong p^*(W)$ for some $W\in \mathrm{Rep}(A)$;
\item for any $V\in \mathrm{Rep}(G)$, if $W_0\subset q^*(V)$ is the maximal trivial sub-object in $\mathrm{Rep}(L)$, then there exists $W\subset V \in \mathrm{Rep}(G)$ such that $q^*(W)=W_0\subset q^*(V)$;
\item any object of $\mathrm{Rep}(L)$ is a sub-object of one in the essential image of $q^*$.
\end{enumerate}
\end{theorem}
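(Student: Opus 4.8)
The plan is to deduce the equivalence by translating between properties of the homomorphisms $q,p$ and those of the associated functors $q^*,p^*$ on representation categories, via the standard Tannakian dictionary (\cite[Ch.~II]{DMOS82}): a homomorphism of affine group schemes is faithfully flat precisely when the induced functor on representations is fully faithful with image closed under sub-objects, and is a closed immersion precisely when every representation of the source is a sub-quotient of the restriction of a representation of the target.

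For the ``only if'' direction, assume the sequence is exact, so that $q$ identifies $L$ with the normal subgroup $\ker p$ and $G/L=A$. Then (1) is immediate: if $L$ acts trivially on $V$ the $G$-action factors through $G/L=A$, and conversely $q^*p^*(W)$ is trivial since $pq$ is trivial. For (2), normality of $L$ makes the maximal $L$-trivial sub-object $W_0=V^L$ stable under all of $G$, so $W:=V^L\subseteq V$ is the required $G$-sub-object. For (3), one embeds a given $U\in\mathrm{Rep}(L)$ into a sum of copies of the regular representation $\cur O(L)$, uses that the $L$-equivariant surjection $\cur O(G)|_L\twoheadrightarrow\cur O(L)$ splits (the regular representation being injective), and observes that finite-dimensionality of $U$ forces it into the restriction of a finite-dimensional $G$-sub-representation of $\cur O(G)^{\oplus n}$.

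For the ``if'' direction --- the substantive one --- factor $q$ as $L\xrightarrow{\bar q}\ker p\hookrightarrow G$, with $\bar q$ automatically a closed immersion; it suffices to show $\bar q$ is an isomorphism, equivalently that $\bar q^*\colon\mathrm{Rep}(\ker p)\to\mathrm{Rep}(L)$ is an equivalence. The heart of the argument is the identity $V^{\ker p}=V^L$ for every $V\in\mathrm{Rep}(G)$: the inclusion $V^{\ker p}\subseteq V^L$ is clear, while by (2) there is a $G$-sub-object $W\subseteq V$ with $q^*(W)=(q^*V)^L$, and then (1) applied to $W$ forces $W\cong p^*(W')$ for some $W'\in\mathrm{Rep}(A)$, so $\ker p$ acts trivially on $W$ and hence $V^L=W\subseteq V^{\ker p}$. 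Passing to internal Homs, this identity yields full faithfulness of $\bar q^*$; combining it with (3) --- every object of $\mathrm{Rep}(L)$ is a sub-object of a restriction $q^*(V)=\bar q^*(V|_{\ker p})$, hence, by full faithfulness together with closure under sub-objects, lies in the essential image of $\bar q^*$ --- upgrades $\bar q^*$ to an equivalence, so that $L=\ker p$ and the sequence is exact.

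The main obstacle I anticipate is the book-keeping around ``sub-object'' versus ``sub-quotient'': the textbook Tannakian criteria for closed immersions are naturally phrased with sub-quotients, whereas condition (3) and the faithful-flatness criterion are phrased with sub-objects, so one must invoke rigidity of the representation categories (passage to duals) and injectivity of the regular representation at exactly the right places, and take care that the internal-Hom reduction in the full-faithfulness step only ever involves objects for which the identity $V^{\ker p}=V^L$ has genuinely been established.
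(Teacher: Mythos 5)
Note first that the paper itself does not prove this result; it is imported verbatim from \cite{EHS08} (Theorem A.1(iii)) and used as a black box, so there is no internal proof to compare your argument against. Judged on its own terms, your ``only if'' direction is correct, and in the ``if'' direction the reduction to showing $\bar q\colon L\to\ker p$ is an isomorphism, together with the derivation of the key identity $V^{\ker p}=V^L$ for $V\in\mathrm{Rep}(G)$ from (1) and (2), is exactly the right backbone. The gap is in the final step. You assert that since (by (3)) every $U\in\mathrm{Rep}(L)$ is a sub-object of some $\bar q^*(\tilde V|_{\ker p})$, it follows ``by full faithfulness together with closure under sub-objects'' that $U$ lies in the essential image of $\bar q^*$. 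But ``the essential image of $\bar q^*$ is closed under sub-objects'' is, together with full faithfulness, \emph{precisely} the Tannakian characterisation of $\bar q$ being faithfully flat --- it is what you are trying to establish, not something you may assume. It is also not a formal consequence of full faithfulness: for $G$ connected reductive in characteristic $0$ and $B\subsetneq G$ a Borel, restriction $\mathrm{Rep}(G)\to\mathrm{Rep}(B)$ is fully faithful (since $V^G=V^B$ for every $V\in\mathrm{Rep}(G)$), yet $B\hookrightarrow G$ is a proper closed subgroup and, for instance, the highest-weight line in the standard representation is a $B$-sub-object not in the essential image. In that example condition (3) fails, and that is what has to do the work here.

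The step can be repaired, but it requires an idea your sketch does not supply: given $U\subseteq\bar q^*(\tilde V|_{\ker p})$ an $L$-sub-object with $\tilde V\in\mathrm{Rep}(G)$ (possible by (3)), apply (3) a second time to the quotient $\tilde V|_L/U$ to obtain an $L$-embedding $\tilde V|_L/U\hookrightarrow\tilde V'|_L$ with $\tilde V'\in\mathrm{Rep}(G)$. The resulting $L$-morphism $\tilde V|_L\to\tilde V'|_L$ has kernel $U$ and lies in $(\tilde V^\vee\otimes\tilde V')^L$, which equals $(\tilde V^\vee\otimes\tilde V')^{\ker p}$ by your key identity applied to the honest $G$-representation $\tilde V^\vee\otimes\tilde V'$; hence the morphism is $\ker p$-equivariant and $U=\ker$ is $\ker p$-stable. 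A parallel fix is needed in the full-faithfulness step, which you partially flag in your closing paragraph: the internal $\mathrm{Hom}$ of two $\ker p$-representations is not itself a $G$-representation, and must first be embedded into a restriction of one via \cite[Theorem A.1(ii)]{EHS08} applied to the normal subgroup $\ker p$ before the identity $V^{\ker p}=V^L$ can be invoked. Both repairs are in the spirit of your plan, but as written the proposal begs the question at its decisive point.
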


In practise, the first two of the conditions in Theorem \ref{theo: exactesnault} are (conceptually at least) very easy to verify, the third extremely difficult. It will therefore be useful to see what happens when we drop it. Note that the intersection of any collection of closed normal subgroups of an affine group scheme $G$ is also a closed normal subgroup, hence we may define the normal closure $H^\mathrm{norm}\subset G$ of a closed subgroup $H\subset G$ as the intersection of all closed normal subgroups containing it. 

\begin{definition} We say that a sequence of affine group schemes
\[ L \overset{q}{\rightarrow} G \overset{p}{\rightarrow} A \rightarrow 1 \]
is weakly exact if $G\overset{p}{\rightarrow} A$ is surjective, the composition $L\overset{p\circ q}{\rightarrow} A$ is trivial, and if $\ker(p)=q(L)^\mathrm{norm}$. In other words, the sequence
\[ 1\rightarrow q(L)^\mathrm{norm}\rightarrow G\rightarrow A \rightarrow 1\]
is exact.
\end{definition}

Weak exactness turns out to be exactly what we can prove without the third condition in Theorem \ref{theo: exactesnault}.

\begin{theorem} \label{theo: tann} Let $L\overset{q}{\rightarrow} G \overset{p}{\rightarrow} A \rightarrow 1 $ be a sequence of affine group schemes over $K$ such that $p$ is faithfully flat. Assume that:
\begin{enumerate} \item if $V\in \mathrm{Rep}(G)$, then $q^*(V)$ is trivial in $\mathrm{Rep}(L)$ if and only if $V\cong p^*(W)$ for some $W\in \mathrm{Rep}(A)$;
\item for any $V\in \mathrm{Rep}(G)$, if $W_0\subset q^*(V)$ is the maximal trivial sub-object in $\mathrm{Rep}(L)$, then there exists $W\subset V \in \mathrm{Rep}(G)$ such that $q^*(W)=W_0\subset q^*(V)$. 
\end{enumerate}
Then $L \rightarrow G \rightarrow A \rightarrow 1$ is weakly exact.
\end{theorem}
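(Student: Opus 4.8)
The plan is to deduce weak exactness from Theorem~\ref{theo: exactesnault}, applied to the sequence
\[ 1 \rightarrow N \rightarrow G \overset{p}{\rightarrow} A \rightarrow 1 ,\]
where $N:=q(L)^\mathrm{norm}$ denotes the normal closure of the scheme-theoretic image $q(L)\subset G$. First I would check that this is a complex, i.e. that $N\subseteq\ker(p)$: by hypothesis~(1) the representation $(p\circ q)^*(W)=q^*p^*(W)$ is trivial for every $W\in\mathrm{Rep}(A)$, so the tensor functor $(p\circ q)^*$ is the forgetful functor followed by the trivial embedding $\mathrm{Vec}_K\hookrightarrow\mathrm{Rep}(L)$, and Tannakian duality then forces $p\circ q$ to be the trivial homomorphism. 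Hence $q(L)\subseteq\ker(p)$, and since $\ker(p)$ is closed and normal, also $N\subseteq\ker(p)$. Combined with the faithful flatness of $p$ this takes care of the first two clauses in the definition of weak exactness, so that the entire statement reduces to the exactness of the displayed sequence, i.e. to the equality $\ker(p)=N$.

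To obtain this I would verify the three hypotheses of Theorem~\ref{theo: exactesnault} for the closed immersion $\iota\colon N\hookrightarrow G$. Two of them are automatic. Condition~(3) holds for any closed subgroup scheme: every finite-dimensional $N$-representation $M$ embeds into $(\mathrm{ind}_N^G M)|_N$ via the unit of the induction adjunction, and $\mathrm{ind}_N^G M$ is the union of its finite-dimensional $G$-subrepresentations, so $M$ is a subobject of the restriction of a finite-dimensional $G$-representation. Condition~(2) holds because $N$ is normal in $G$: for $V\in\mathrm{Rep}(G)$ the maximal trivial subobject of $V|_N$ is the invariant subspace $V^N$, and normality of $N$ makes $V^N$ a $G$-subrepresentation, so one may take $W=V^N$.

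It therefore only remains to check condition~(1) of Theorem~\ref{theo: exactesnault} for $\iota$, namely that $V|_N$ is trivial if and only if $V\cong p^*(W)$ for some $W\in\mathrm{Rep}(A)$; for this I would reduce to hypothesis~(1) of the present statement by showing that $V|_N$ is trivial if and only if $q^*(V)$ is trivial. One direction is immediate from $q(L)\subseteq N$. Conversely, if $q^*(V)$ is trivial then $q(L)$ is contained in the kernel of the homomorphism $G\rightarrow\mathrm{GL}(V)$, and this kernel is a closed normal subgroup of $G$, hence contains $N=q(L)^\mathrm{norm}$; therefore $V|_N$ is trivial. Feeding this equivalence into hypothesis~(1) gives precisely condition~(1) for $\iota$, and Theorem~\ref{theo: exactesnault} then yields $\ker(p)=N$, which is what weak exactness asserts.

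The step I expect to be delicate is the automaticity of condition~(3) for closed subgroups — the injectivity of the adjunction unit $M\rightarrow(\mathrm{ind}_N^G M)|_N$ together with the exhaustion of $\mathrm{ind}_N^G M$ by finite-dimensional $G$-subrepresentations — and more conceptually the realisation that the entire difficulty of the ``extremely difficult'' third condition of Theorem~\ref{theo: exactesnault} disappears once one has arranged, by passing to the normal closure $N$, that the left-hand map is a genuine closed immersion rather than an arbitrary homomorphism. Everything else is formal Tannakian bookkeeping; in particular, hypothesis~(2) is not logically needed for this route and is presumably retained only to mirror the shape of Theorem~\ref{theo: exactesnault}.
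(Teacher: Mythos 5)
Your overall strategy coincides with the paper's: pass to the normal closure $N=q(L)^{\mathrm{norm}}$, verify the three conditions of Theorem~\ref{theo: exactesnault} for the sequence $1\to N\to G\to A\to 1$, and conclude. Your reduction of condition~(1) for $N$ to hypothesis~(1) of the theorem, via the equivalence ``$V|_N$ trivial $\iff$ $q^*(V)$ trivial'' (one direction from $q(L)\subset N$, the other from normality of $\ker(G\to\mathrm{GL}(V))$), is correct and is essentially the ``easy half'' of what the paper checks when it transfers its hypotheses to $L^{\mathrm{norm}}$. Your observation that condition~(2) of Theorem~\ref{theo: exactesnault} is \emph{automatic} for a normal closed subgroup (because $V^N$ is then $G$-stable) is also correct, and this is genuinely a small simplification over the paper, which instead uses hypothesis~(2) of the present theorem to show that $V^L=V^{L^{\mathrm{norm}}}$. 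You are right that, along this route, hypothesis~(2) is never invoked.

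The genuine gap is your treatment of condition~(3). You assert that it ``holds for any closed subgroup scheme,'' with a proof via ``the unit of the induction adjunction $M\to(\mathrm{ind}_N^G M)|_N$.'' Both the general claim and the mechanism are wrong. For affine group schemes over a field, restriction is \emph{left} adjoint to $\mathrm{ind}_N^G$; the unit is a map $V\to\mathrm{ind}_N^G(V|_N)$ for $G$-representations $V$, and the counit is a map $(\mathrm{ind}_N^G M)|_N\to M$ for $N$-representations $M$. There is no canonical injection $M\hookrightarrow(\mathrm{ind}_N^G M)|_N$. Moreover the conclusion is false for general closed subgroups: take $N=B\subset G=\mathrm{SL}_2$ the Borel and $M=k_{-1}$ the negative-weight character of $B$. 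Then $\mathrm{ind}_B^{\mathrm{SL}_2}(k_{-1})=H^0(\mathbb{P}^1,\mathcal{O}(-1))=0$, and more to the point $k_{-1}$ does not embed $B$-equivariantly into $V|_B$ for any finite-dimensional $\mathrm{SL}_2$-module $V$ (the $B$-stable lines in such $V$ are highest-weight lines, all of non-negative weight). What is true, and what you actually need, is that condition~(3) holds for a \emph{normal} closed subgroup $N\subset G$: then $A=G/N$ is affine, so $\mathrm{ind}_N^G(M)$ is a quasi-coherent $\mathcal{O}(A)$-module and the counit $(\mathrm{ind}_N^G M)|_N\to M$ is identified with the surjective fibre map at $e\in A$; dualising and exhausting $\mathrm{ind}_N^G(M^*)$ by finite-dimensional $G$-submodules gives the embedding. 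This is the statement the paper cites as \cite[Theorem A.1(ii)]{EHS08}, and it is the normality of $N$, not merely the fact that $N\hookrightarrow G$ is a closed immersion, that makes the ``extremely difficult'' condition~(3) evaporate. Since in your argument $N=q(L)^{\mathrm{norm}}$ is indeed normal, the overall scheme of proof does go through once the justification of condition~(3) is repaired, but as written the key step is both misattributed and overgeneralised.
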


\begin{proof} First note that by \cite[Theorem A.1]{EHS08} we may describe $\mathrm{Rep}(q(L))$ as the full subcategory of $\mathrm{Rep}(L)$ consisting of objects which are sub-quotients of objects in the essential image of $q^*:\mathrm{Rep}(G)\rightarrow \mathrm{Rep}(L)$. In particular, it is straightforward to verify that both conditions continue to hold if we replace $L$ by $q(L)$, in other words we may assume that $q$ is a closed immersion and $L$ is in fact a closed subgroup of $G$.

We next claim that moreover the conditions continue to hold if we replace $L$ by the normal subgroup $L^\mathrm{norm}$ it generates, the non-trivial one is (2). In this case, we know from condition (2) applied to $L$ that for any representation $V$ of $G$, the subspace $V^L$ is in fact stable by $G$. Since $V^L$ is therefore a $G$-representation on which $L$ acts trivially, it follows that $L^\mathrm{norm}$ acts trivially, in particular we have $V^L=V^{L^\mathrm{norm}}$, which suffices to prove that (2) also holds for $L^\mathrm{norm}$. 

In other words we may in fact assume that $L=L^\mathrm{norm}$, and in particular that $L$ is a normal subgroup of $G$. But now we note that by \cite[Theorem A.1(ii)]{EHS08} any object of $\mathrm{Rep}(L)$ is a sub-object of one in the essential image of $q^*$, and hence applying Theorem \ref{theo: exactesnault} we can see that the sequence
\[ 1\rightarrow L \rightarrow G \rightarrow A\rightarrow 1 \]
is exact.
\end{proof}

As mentioned before, the conditions of Theorem \ref{theo: tann} are often easy to verify, and in the situation of Theorem \ref{theo: main1} we may do so as follows. Let $f:W\rightarrow V$, $w\in W(K)$ be as in Theorem \ref{theo: main1}, and suppose we are given $E\in \mathrm{MIC}(W/K)$. We define
\begin{align*}
f_{\mathrm{dR}*}E&:= \mathbf{R}^0f_*\left(E\otimes_{\cur{O}_W}\Omega^*_{W/V} \right) \\
&= \ker\left(f_*E \rightarrow f_*(E\otimes \Omega^1_{W/V}) \right) \\
&= f_*\ker\left(E\rightarrow E\otimes \Omega^1_{W/V} \right)
\end{align*}
to be the sheaf of relative horizontal sections. Since $f$ is proper, $f_{\mathrm{dR}*}E$ is a coherent sheaf on $V$, and exactly as in \cite[\S2]{KO68} we may endow it with an integrable connection. One easily verifies that 
\[ f^*:\mathrm{MIC}(V/K) \leftrightarrows \mathrm{MIC}(W/K) : f_{\mathrm{dR}*}\]
are adjoint functors, and that for any $K$-valued point $v\in V(K)$ there is a natural isomorphism
\[ v^*f_{\mathrm{dR}*}E \cong H^0_\mathrm{dR}(W_v/K,E|_{W_v}). \]

\begin{lemma}\label{lemma: dradj} In the situation of Theorem \ref{theo: main1} the sequence
\[ \pi_1^\mathrm{dR}(W_v,w)\rightarrow \pi_1^\mathrm{dR}(W,w)\rightarrow \pi_1^\mathrm{dR}(V,v) \rightarrow 1 \]
is weakly exact.
\end{lemma}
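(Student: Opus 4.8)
The plan is to apply Theorem~\ref{theo: tann} to the sequence
\[ L := \pi_1^{\mathrm{dR}}(W_v,w) \overset{q}{\longrightarrow} G := \pi_1^{\mathrm{dR}}(W,w) \overset{p}{\longrightarrow} A := \pi_1^{\mathrm{dR}}(V,v), \]
where $q$ and $p$ are Tannaka-dual, respectively, to the restriction functor $i^*\colon \mathrm{MIC}(W/K)\to\mathrm{MIC}(W_v/K)$ along $i\colon W_v\hookrightarrow W$ and to $f^*\colon\mathrm{MIC}(V/K)\to\mathrm{MIC}(W/K)$; the fibre functors at $w$ are compatible since $w\in W_v(K)$ and $v=f(w)$. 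So I need to check that $p$ is faithfully flat, and verify conditions (1) and (2) of that theorem, which here read: for $E\in\mathrm{MIC}(W/K)$, (1) $E|_{W_v}$ is trivial if and only if $E\cong f^*F$ for some $F$, and (2) the maximal trivial subobject of $E|_{W_v}$ extends to a subobject of $E$ on $W$.

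First I would record that, since $f$ is proper with geometrically connected (hence reduced) fibres, $f_*\mathcal{O}_W=\mathcal{O}_V$, whence also $f_{\mathrm{dR}*}\mathcal{O}_W=\mathcal{O}_V$ (being squeezed between $\mathcal{O}_V$ and $f_*\mathcal{O}_W$); by the projection formula the unit $E\to f_{\mathrm{dR}*}f^*E$ is then an isomorphism, so $f^*$ is fully faithful. For (1), the implication $E\cong f^*F\Rightarrow E|_{W_v}$ trivial is immediate. Conversely, if $E|_{W_v}$ is trivial I would set $M:=f_{\mathrm{dR}*}E$, which is locally free on the connected $V$ by the proof of Proposition~\ref{prop: tann1}; the base-change isomorphism $(v')^*M\cong H^0_{\mathrm{dR}}(W_{v'},E|_{W_{v'}})$ then forces $h^0_{\mathrm{dR}}(W_{v'},E|_{W_{v'}})$ to be constant in $v'$, hence equal to $\mathrm{rk}\,E$, so every $E|_{W_{v'}}$ is trivial and the counit $\varepsilon\colon f^*M\to E$ restricts on each fibre to the bijective evaluation map; a morphism of coherent sheaves that is fibrewise an isomorphism is an isomorphism, giving $E\cong f^*M$.

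Granting full faithfulness of $f^*$ and (1), faithful flatness of $p$ follows from the usual Tannakian criterion: I only need that a subobject $U\subseteq f^*E$ in $\mathrm{MIC}(W/K)$ is of the form $f^*E'$ with $E'\subseteq E$, and this holds because $U|_{W_v}$, being a subobject of the trivial object $(f^*E)|_{W_v}$, is trivial, so (1) applies and full faithfulness upgrades the resulting isomorphism $f^*E'\cong U$ to an honest subobject. For (2), given $E$ I would take $F:=\mathrm{im}\big(\varepsilon\colon f^*f_{\mathrm{dR}*}E\to E\big)\subseteq E$; then $E/F$ is locally free (again Proposition~\ref{prop: tann1}), so $F|_{W_v}\hookrightarrow E|_{W_v}$ is injective, and identifying $(f^*f_{\mathrm{dR}*}E)|_{W_v}$ with $H^0_{\mathrm{dR}}(W_v,E|_{W_v})\otimes_K\mathcal{O}_{W_v}$ via base change, under which $\varepsilon|_{W_v}$ becomes evaluation, I get that $F|_{W_v}$ is exactly the maximal trivial subobject of $E|_{W_v}$. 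Theorem~\ref{theo: tann} then yields weak exactness.

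I do not expect any serious obstacle here: this is the ``easy'' half of dos Santos' argument, and everything is driven by the three inputs already set up before the statement — the functor $f_{\mathrm{dR}*}$, the adjunction $f^*\dashv f_{\mathrm{dR}*}$, and the base-change isomorphism $v^*f_{\mathrm{dR}*}E\cong H^0_{\mathrm{dR}}(W_v,E|_{W_v})$. The one place calling for care is the compatibility of the counit $\varepsilon$ with the fibrewise evaluation maps, together with the bookkeeping ($\mathcal{T}\!or$-vanishing coming from local freeness of objects of $\mathrm{MIC}$) needed to compute $\mathrm{im}(\varepsilon)|_{W_v}$; it is precisely local freeness of coherent modules with integrable connection that makes these rigidity arguments go through.
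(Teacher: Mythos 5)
Your proposal is correct and follows essentially the same strategy as the paper's own proof: apply Theorem \ref{theo: tann} by using the adjunction $f^*\dashv f_{\dR*}$ and the base-change formula $v^*f_{\dR*}E\cong H^0_\dR(W_v,E|_{W_v})$ to establish full faithfulness of $f^*$, faithful flatness of $p$ via the ``subobject of a trivial object is trivial'' observation, and conditions (1) and (2) via the counit $\varepsilon\colon f^*f_{\dR*}E\to E$. The paper is terser (e.g.\ it takes $f^*f_{\dR*}E\subset E$ directly for condition (2) rather than the image of $\varepsilon$, implicitly using that $\varepsilon$ is injective because its kernel is locally free and vanishes on $W_v$), but the underlying reasoning is the same.
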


\begin{proof}
Using the fact that $v^*f_{\mathrm{dR}*}E \cong H^0_\mathrm{dR}(W_v/K,E|_{W_v})$ one easily checks that the adjunction map $F\rightarrow f_{\mathrm{dR}*}f^*F$ is an isomorphism for any $F\in \mathrm{MIC}(V/K)$, thus the functor $f^*$ is fully faithful. If we are given a sub-object $E\subset f^*F$, then again applying $f^*f_{\mathrm{dR}*}$ we obtain
\[ f^*f_{\mathrm{dR}*}E \subset E\subset f^*F \]
and we claim that in fact $ f^*f_{\mathrm{dR}*}E = E$. But since this can be checked on fibres, it follows from the fact that any sub-object of a trivial object in $\mathrm{MIC}(W_v/K)$ is itself trivial.

Hence the map $\pi_1^\mathrm{dR}(W,w)\rightarrow \pi_1^\mathrm{dR}(V,v)$ is faithfully flat. To show that condition (1) in Theorem \ref{theo: tann} holds, we note that for $E\in \mathrm{MIC}(W/K)$ the adjunction map $f^*f_{\dR*}E\rightarrow E$ is an isomorphism iff it is so on fibres, which happens iff $E|_{W_v}$ is trivial. Similarly, for (2) we can take $f^*f_{\dR*} E\subset E$ as the required sub-object.
\end{proof}

The reason that this is useful is that now we can formulate an alternative version of dos Santos' criterion from Theorem \ref{theo: exactds}.

\begin{proposition} \label{prop: mix} Let $L\overset{q}{\rightarrow} G \overset{p}{\rightarrow} A \rightarrow 1 $ be a weakly exact sequence of affine group schemes. Then the sequence is exact if and only if for any $V\in \mathrm{Rep}(G)$ the fixed scheme
\[ \P(V)^{L} \subset \P(V) \]
is invariant under $G$.
\end{proposition}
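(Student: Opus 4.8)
The plan is to deduce the equivalence from dos Santos' criterion (Theorem~\ref{theo: exactds}), together with the fact that, for a weakly exact sequence, $\P(V)^{\ker p}$ is the largest $G$-invariant closed subscheme of $\P(V)$ contained in $\P(V)^L$. Two preliminary remarks: since we work over a field, $q$ factors as a faithfully flat morphism onto a closed subgroup $q(L)\subseteq G$, and the fixed scheme $\P(V)^L=\P(V)^{q(L)}$ depends only on this image; and weak exactness gives $q(L)\subseteq q(L)^\mathrm{norm}=\ker p$, so $\P(V)^{\ker p}\subseteq\P(V)^L$ for every $V\in\mathrm{Rep}(G)$, while $\P(V)^{\ker p}$ is itself $G$-invariant because $\ker p$ is normal in $G$.

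The technical heart is the claim that if $Z\subseteq\P(V)$ is a closed subscheme which is $G$-invariant and contained in $\P(V)^L$, then $Z\subseteq\P(V)^{\ker p}$. To prove it I would introduce the pointwise stabiliser $H=\{g\in G:g|_Z=\mathrm{id}_Z\}$, a closed subgroup of $G$: since $Z$ is closed in the proper $K$-scheme $\P(V)$ it is itself proper over $K$, and $H$ is the equaliser of two morphisms $G\rightrightarrows\underline{\mathrm{Hom}}_K(Z,\P(V))$ into a separated $K$-scheme, the first sending $g$ to the morphism $z\mapsto g\cdot z$ and the second being constant at the inclusion $Z\hookrightarrow\P(V)$. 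Because $Z$ is $G$-invariant, $H$ is a \emph{normal} closed subgroup of $G$; because $Z\subseteq\P(V)^L$, it contains $q(L)$; therefore it contains the normal closure $q(L)^\mathrm{norm}=\ker p$. But $\ker p\subseteq H$ means precisely that $\ker p$ fixes $Z$ pointwise, i.e. $Z\subseteq\P(V)^{\ker p}$.

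Granting the claim, the proposition follows. If $\P(V)^L$ is $G$-invariant for all $V\in\mathrm{Rep}(G)$, then applying the claim with $Z=\P(V)^L$ gives $\P(V)^L\subseteq\P(V)^{\ker p}$, hence equality, so in particular $\P(V)^{\ker p}(K)=\P(V)^L(K)$ for all $V$ and Theorem~\ref{theo: exactds} shows that the sequence is exact. Conversely, if the sequence is exact then its image equals $\ker p$, i.e. $q(L)=\ker p$, and hence $\P(V)^L=\P(V)^{q(L)}=\P(V)^{\ker p}$ is $G$-invariant for every $V$ because $\ker p$ is normal in $G$.

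The step I expect to be the main obstacle is the group-scheme-theoretic bookkeeping underlying the claim: checking that the pointwise stabiliser $H$ is a closed normal subgroup of $G$, so that $q(L)\subseteq H$ does force $q(L)^\mathrm{norm}\subseteq H$. Properness of $\P(V)$, and so of any $G$-invariant closed subscheme of it, is exactly the ingredient that makes this representability work, and the precise statements needed should be available from the treatment of fixed schemes in \cite{dS15}.
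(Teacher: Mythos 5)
Your argument is correct and is essentially the same as the paper's: both reduce the statement to Theorem~\ref{theo: exactds} and then show that the pointwise stabiliser of $\P(V)^L$ (your $H$, the paper's $\ker\rho$ for $\rho:G\to\mathbf{Aut}_K(\P(V)^L)$) is a \emph{closed} normal subgroup of $G$, using projectivity of $\P(V)^L$ to get representability, so that it contains $q(L)^{\mathrm{norm}}=\ker p$. The only cosmetic difference is that you phrase the representability via $\underline{\mathrm{Hom}}_K(Z,\P(V))$ and deduce normality of $H$ from $G$-invariance of $Z$, whereas the paper works directly with $\mathbf{Aut}_K(\P(V)^L)$ and gets normality for free as the kernel of a homomorphism; you also spell out the (easy) converse direction, which the paper leaves implicit.
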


\begin{proof} By Theorem \ref{theo: exactds} we must prove that the inclusion $\P(V)^{\ker p}(K) \subset \P(V)^L(K)$ of $K$-points on the fixed scheme is an equality. If $\P(V)^L$ is invariant under $G$, then we obtain a homomorphism
\[ \rho: G\rightarrow \mathbf{Aut}_K(\P(V)^L) \]
of functors on $K$-schemes that by definition satisfies $q(L)\subset \ker \rho$. Since $\P(V)^L$ is a projective variety the functor $\mathbf{Aut}_K(\P(V)^L)$ is representable by a group scheme over $K$, hence $\ker  \rho$ is a \emph{closed} normal subgroup of $G$. Since it contains $q(L)$, it must also contain $q(L)^\mathrm{norm}$, from which we deduce that $\ker p$ must act trivially on $\P(V)^L$. Hence the claimed equality does indeed hold.
\end{proof}

This shows the importance of considering projective schemes together with actions of the fundamental group, and many results from \cite{dS15} involve extending the classical Tannakian duality to include these sorts of objects. We expect many of these results to also hold in the analytic context, but in our impatience to prove Theorem \ref{theo: main1} (and consequently \ref{theo: main2} below) we have not investigated this fully. Instead, we will stick to the more restrictive category of varieties together with a \emph{polarisable} action.

\begin{definition} \label{def: polar} Let $G$ be an affine group scheme over $K$, $Y$ a proper $K$-variety, and $\rho:G\rightarrow \mathbf{Aut}_K(Y)$ an action of $G$ on $Y$. We say that the action is polarisable if:
\begin{enumerate} \item $\rho$ factors through an algebraic quotient $G\twoheadrightarrow H$;
\item there exists an ample line bundle $\cur{L}$ on $Y$ admitting a $H$-linearisation.
\end{enumerate}
\end{definition}

Note that `polarisable' simply means that such an $H$ and $\cur{L}$ exist, we do not specify them as part of the data.

\begin{lemma} \label{lemma: polar} A $G$-action on $Y$ is polarisable if and only if there exists some $V\in \mathrm{Rep}(G)$ and a $G$-equivariant closed embedding
\[ Y \hookrightarrow \P(V).\]
\end{lemma}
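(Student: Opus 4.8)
The plan is to prove both implications by relating $G$-equivariant closed embeddings into projectivizations of representations with the two conditions in Definition \ref{def: polar}.

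For the "if" direction, suppose we are given $V\in\mathrm{Rep}(G)$ and a $G$-equivariant closed embedding $Y\hookrightarrow\P(V)$. First I would observe that since $Y$ is proper (hence of finite type) and $\P(V)$ is an increasing union of the finite-dimensional projective spaces $\P(V')$ for $V'\subset V$ a $G$-subrepresentation, the embedding factors through some $\P(V')$ with $\dim_K V'<\infty$; replacing $V$ by $V'$ we may assume $V$ is finite-dimensional. Then the action of $G$ on $Y$ factors through the image of $G$ in $\mathrm{GL}(V)$, which is an algebraic quotient $G\twoheadrightarrow H$, giving condition (1). For condition (2), take $\cur{L}=\cur{O}_{\P(V)}(1)|_Y$: this is ample since $Y\hookrightarrow\P(V)$ is a closed immersion, and the tautological $H$-linearisation of $\cur{O}_{\P(V)}(1)$ (coming from $H\hookrightarrow\mathrm{GL}(V)$) restricts to an $H$-linearisation of $\cur{L}$.

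For the "only if" direction, suppose the action is polarisable, with $G\twoheadrightarrow H$ and an ample line bundle $\cur{L}$ on $Y$ carrying an $H$-linearisation. Since $\cur{L}$ is ample and $Y$ is proper over $K$, some power $\cur{L}^{\otimes n}$ is very ample, and $\cur{L}^{\otimes n}$ inherits an $H$-linearisation from that of $\cur{L}$; so we may assume $\cur{L}$ itself is very ample. Now set $V:=H^0(Y,\cur{L})^{\vee}$, a finite-dimensional vector space. The $H$-linearisation of $\cur{L}$ induces a linear $H$-action on $H^0(Y,\cur{L})$ (functorially in the sense of \cite{DMOS82}-style GIT/linearisation formalism), hence an $H$-action, and therefore a $G$-action via $G\twoheadrightarrow H$, on $V$; this makes $V$ an object of $\mathrm{Rep}(G)$. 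Very ampleness of $\cur{L}$ gives a closed embedding $Y\hookrightarrow\P(H^0(Y,\cur{L})^{\vee})=\P(V)$, and the compatibility of this embedding with the $H$-linearisation makes it $G$-equivariant.

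The main point requiring care — and the step I expect to be the real content rather than formal nonsense — is the passage from an $H$-linearised very ample line bundle to a $G$-equivariant projective embedding, i.e. checking that the linear action on global sections is algebraic (factors through $H$, not just the abstract group) and that the evaluation map $\cur{O}_Y\otimes_K H^0(Y,\cur{L})\to\cur{L}$ is genuinely $H$-equivariant, so that the induced embedding respects the actions. This is standard in the algebraic setting over a field, and since $Y$ is a proper $K$-variety and $H$ is an affine algebraic group over $K$ acting algebraically, the usual arguments (e.g. from the theory of linearised line bundles) apply verbatim; I would simply cite the relevant statement rather than reprove it.
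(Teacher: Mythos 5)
Your proof follows essentially the same approach as the paper: for the forward direction, reduce to a very ample $H$-linearised line bundle $\cur{L}$ and use the resulting $G$-action on $H^0(Y,\cur{L})$ to produce the equivariant embedding; for the converse, observe that the $G$-action on $V$ factors through an algebraic quotient and restrict $\cur{O}(1)$. The only cosmetic difference is the projective-space convention (you use $\P(H^0(Y,\cur{L})^\vee)$ where the paper writes $\P(H^0(Y,\cur{L}))$, i.e.\ lines versus hyperplanes), which does not affect the argument.
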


\begin{proof} Since the action on any such $V$ must factor through an algebraic quotient, the existence of such an embedding clearly implies polarisability. For the converse, we may assume that $G$ is algebraic and that the line bundle $\cur{L}$ in condition (2) is very ample. In this situation, $H^0(Y,\cur{L})$ is a finite dimensional representation of $G$, and the natural map
\[ Y \rightarrow \P(H^0(Y,\cur{L}))\]
is $G$-equivariant.
\end{proof}

\begin{remark} In fact, the proof of this lemma shows that the condition in Definition \ref{def: polar} that the action of $G$ on either $Y$ or the ample line bundle $\cur{L}$ factors through some algebraic quotient is redundant.
\end{remark}

\begin{corollary} If $G\rightarrow H$ is a homomorphism of affine group schemes, and $Y$ is a proper $K$-variety with a polarisable $H$-action, then the induced $G$-action is also polarisable.
\end{corollary}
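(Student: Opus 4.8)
The plan is to reduce immediately to Lemma \ref{lemma: polar}, which repackages polarisability as the existence of a $G$-equivariant closed embedding into the projectivisation of a representation. So first I would apply Lemma \ref{lemma: polar} to the given polarisable $H$-action on $Y$, obtaining a representation $V\in\mathrm{Rep}(H)$ together with an $H$-equivariant closed embedding $Y\hookrightarrow\P(V)$.

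Next I would pull $V$ back along the given homomorphism $G\to H$ to regard it as an object of $\mathrm{Rep}(G)$; call this representation $V_G$. The key observation is that both the induced $G$-action on $Y$ and the $G$-action on $\P(V_G)=\P(V)$ are, by definition, obtained by composing the respective $H$-actions with $G\to H$. Hence the closed embedding $Y\hookrightarrow\P(V)$, being $H$-equivariant, is automatically equivariant for these induced $G$-actions. Applying Lemma \ref{lemma: polar} once more, this time in the direction that deduces polarisability from the existence of such an embedding, we conclude that the induced $G$-action on $Y$ is polarisable.

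I do not expect any genuine obstacle here: all the real content is already contained in Lemma \ref{lemma: polar}, and once that characterisation is in hand the corollary is a purely formal functoriality statement. The only point that needs checking — and it is immediate from the construction of the induced actions — is the compatibility asserted above, namely that restricting scalars on the ambient representation along $G\to H$ turns an $H$-equivariant embedding into a $G$-equivariant one.
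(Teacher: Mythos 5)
Your proof is correct and is exactly the intended argument: the paper states the corollary without proof immediately after Lemma \ref{lemma: polar}, and your two applications of that lemma (first to produce an $H$-equivariant embedding $Y\hookrightarrow\P(V)$, then to read it as a $G$-equivariant embedding after restricting $V$ along $G\to H$) are precisely the implicit reasoning.
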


\section{Stratified analytic spaces}

The proof of Lemma \ref{lemma: dradj} demonstrates that the problem of proving weak exactness of the homotopy sequence is more or less that of constructing well-behaved `push-forwards' of coherent modules with integrable connections along the given map $f:W\rightarrow V$. Similarly, one of the key insights of \cite{dS15} is that the problem of proving that the conditions of Theorem \ref{theo: exactds} hold is essentially one of constructing push-forwards of more general, non-linear fibre bundles over $W$, endowed with `non-linear connections'. The construction of such push-forwards is exactly what we will want to imitate in the analytic setting. First, however, we will need to discuss the concept of a stratification on a analytic variety over some given base, which is the correct way to generalise integrable connections to non-linear objects.

So let $V/K$ be an analytic variety, which for now will we not necessarily assume to be smooth. Let $P_V^n$ denote the $n$th order infinitesimal neighbourhood of $V$ inside $V\times V$, and $p_i^n:P_V^n\rightarrow V$ for $i=0,1$ the projection maps.

\begin{definition} Let $Z\rightarrow V$ be an analytic variety over $V$. Then a stratification on $Z$ is a collection of compatible isomorphisms 
\[ \varepsilon_n:  Z\times_{p_0^n} P^n_V \isomto  P^n_V \times_{p_1^n} Z,\]
of $P^n_V$-varieties such that $\epsilon_0=\mathrm{id}$, and which satisfy the cocyle condition (see for example \cite[Ch. II, \S1]{Ber74}). A morphism of stratified varieties is simply a morphism compatible with the maps $\varepsilon_n$, and the category of such objects will be denoted $\mathbf{Str}(V/K)$. We will denote the full subcategory of $\mathbf{Str}(V/K)$ consisting of varieties $Z\rightarrow V$ which are \emph{projective} over $V$ by $\mathbf{StrP}(V/K)$. 
\end{definition}

\begin{example} \label{exa: strat} Assume that $V$ is smooth. \begin{enumerate}
\item If $Z\rightarrow V$ is a bundle of analytic affine spaces, in other words is locally isomorphic to the projection $\A^{n,\mathrm{an}}_V \rightarrow V$, and the stratification maps
\[ Z\times_{p_0^n} P^n_V \isomto  P^n_V \times_{p_1^n} Z \]
are linear, then we recover the notion of a coherent module with integrable connection on $V$. 
\item \label{exa: strat2} If $E$ is a coherent $\cur{O}_V$-module with integrable connection, with associated affine bundle $\mathbf{E}\rightarrow V$, then the projectivisation $\P(E)\rightarrow V$ of $\mathbf{E}$ inherits a stratification from that on $E$.
\end{enumerate}
\end{example}

More generally, a natural source of stratified varieties will be varieties equipped with an action of the fundamental group. From now on we will assume that $V$ is smooth, connected, and with a $K$-rational point $v\in V(K)$.

\begin{definition} We will denote the category of proper $K$-varieties together with a polarisable $\pi_1^\dR(V,v)$-action by $\cur{R}_V$.
\end{definition}

It is worth pointing out that the category we have denoted $\cur{R}_V$ is not the direct analogue in the analytic context of dos Santos' category of the same name considered in \cite[\S6.2]{dS15}. Our conditions are rather more restrictive, however, $\cur{R}_V$ will still have enough objects for our purposes.

The construction of objects in $\mathbf{StrP}(V/K)$ from those in $\cur{R}_V$ is relatively straightforward. Indeed, if $(Y,\rho) \in \cur{R}_V$ then we may choose an equivariant embedding $Y\hookrightarrow \P^N_K$ for some linear action of $\pi_1^\dR(V,v)$ on $\A^{N+1}_K$, and via this we may view the projective co-ordinate ring
\[ S_Y := \bigoplus_n \Gamma(Y,\cur{O}_{\P^N_K}(n) ) \]
as a $\pi_1^\dR(V,v)$-representation. By construction, we know that $S_Y$ is the colimit of its finite dimensional sub-representations, and hence via the usual Tannakian correspondence we can construct an associated ind-coherent sheaf $\cur{S}_Y$ of graded rings on $V$, equipped with an integrable connection. We now define
\[ \cur{U}_V(Y,\rho):= \mathbf{Proj}_{\cur{O}_V} ( \cur{S}_Y )\]
via the relative Proj construction of \cite{Con06}. The integrable connection on $\cur{S}_Y$ induces a stratification on $\cur{U}_V(Y,\rho)$, making it into an object of $\mathbf{StrP}(V/K)$. This generalises Example \ref{exa: strat}(\ref{exa: strat2}) in that if $Y=\P(E_v)$ for some $E\in \mathrm{MIC}(V/K)$, then $\cur{U}_V(Y)\cong \P(E)$.

\begin{proposition} \label{prop: functor} This construction induces a functor $\cur{U}_V:\cur{R}_V\rightarrow \mathbf{StrP}(V/K)$ from polarisable $\pi_1^\dR(V,v)$-varieties to projective stratified $V$-varieties. It is compatible with pull-back via morphisms $f:W\rightarrow V$ in the sense that the diagram
\[  \xymatrix{ \cur{R}_V \ar[d]\ar[r] & \mathbf{StrP}(V/K)\ar[d] \\ \cur{R}_W\ar[r] & \mathbf{StrP}(W/K)  } \]
is 2-commutative.
\end{proposition}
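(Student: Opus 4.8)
The plan is to verify in turn that $\cur{U}_V(Y,\rho)$ is a well-defined object of $\mathbf{StrP}(V/K)$, independent of the auxiliary equivariant embedding, that $\cur{U}_V$ is functorial, and that it commutes with pull-back. Throughout we use the Tannakian equivalence $\mathrm{MIC}(V/K)\simeq\mathrm{Rep}(\pi_1^\dR(V,v))$ furnished by Proposition \ref{prop: tann1}, extended in the evident way to ind-objects and to graded algebra objects, so that $\cur{S}_Y$ is the graded ind-$\cur{O}_V$-algebra with integrable connection corresponding to the graded $\pi_1^\dR(V,v)$-representation $S_Y$.

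For the object: replacing $S_Y$ by a Veronese subalgebra if necessary, it is a finitely generated graded $K$-algebra generated in degree one, and the same then holds for $\cur{S}_Y$ relatively over $\cur{O}_V$, so $\mathbf{Proj}_{\cur{O}_V}(\cur{S}_Y)$ exists and is projective over $V$ by \cite{Con06}. An integrable connection on $\cur{S}_Y$ is by definition precisely a compatible system of isomorphisms $p_0^{n*}\cur{S}_Y\isomto p_1^{n*}\cur{S}_Y$ of graded $\cur{O}_{P^n_V}$-algebras, restricting to the identity over $V$ and satisfying the cocycle condition (cf. Example \ref{exa: strat}); applying the relative $\mathbf{Proj}$ functor, which commutes with the base changes $p_i^n:P^n_V\to V$, and propagating the cocycle condition, yields the required stratification $\varepsilon_n$ on $\cur{U}_V(Y,\rho)$. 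Independence of the chosen embedding is cleanest to see by reinterpreting the construction: writing $\pi=\pi_1^\dR(V,v)$ and letting $P\to V$ be the pro-algebraic universal $\pi$-torsor --- the relative spectrum over $V$ of the ind-$\mathrm{MIC}$-algebra attached to the regular representation $\cur{O}(\pi)$, with its canonical stratification --- one has canonically $\cur{U}_V(Y,\rho)\cong P\times^\pi Y$, with the stratification inherited from $P$, a description visibly independent of all choices. (Alternatively, any two equivariant embeddings are dominated by the one attached to the tensor product of the two very ample linearised bundles, and $\mathbf{Proj}$ is insensitive to such an enlargement.)

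Functoriality is then immediate: a morphism $\phi\colon(Y,\rho)\to(Y',\rho')$ in $\cur{R}_V$ induces $\mathrm{id}_P\times^\pi\phi$, a projective morphism of stratified $V$-varieties, and composition is respected. For pull-back along $f\colon W\to V$, the induced functor $\cur{R}_V\to\cur{R}_W$ is restriction of the action along the homomorphism $\pi_1^\dR(W,w)\to\pi_1^\dR(V,v)$ induced by $f$, while $f^*\colon\mathrm{MIC}(V/K)\to\mathrm{MIC}(W/K)$ corresponds to the same restriction of representations; hence $f^*\cur{S}_Y=\cur{S}_{f^*Y}$ as graded algebras with connection, and since relative $\mathbf{Proj}$ commutes with base change \cite{Con06} we obtain a canonical isomorphism $f^*\cur{U}_V(Y,\rho)\cong\cur{U}_W(f^*(Y,\rho))$ in $\mathbf{StrP}(W/K)$, compatible with the stratifications (base-change the $\varepsilon_n$) and with composition of morphisms $W''\to W'\to W$ by uniqueness of base-change isomorphisms. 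The one step demanding genuine care in the analytic setting is the good behaviour of Conrad's relative $\mathbf{Proj}$ --- its existence, projectivity, and especially its compatibility with the base changes producing the stratification and the pull-back isomorphism; granting this, the cocycle condition and the 2-commutativity are routine diagram chases.
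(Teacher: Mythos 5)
Your proposal is correct, but it takes a somewhat different route from the paper on the two points requiring argument. For independence of the equivariant embedding and for functoriality in $(Y,\rho)$, the paper works directly with the graded algebras: two embeddings $Y\hookrightarrow\P(E_v)$ and $Y\hookrightarrow\P(F_v)$ are compared via their product $Y\hookrightarrow\P(E_v)\times_K\P(F_v)$ and the two projections, and functoriality is reduced via the graph construction to closed immersions and projections. You instead propose the cleaner conceptual description $\cur{U}_V(Y,\rho)\cong P\times^\pi Y$ via the universal pro-algebraic $\pi$-torsor $P\to V$, which makes both independence and functoriality manifest. This is a genuine simplification of presentation, but it trades one delicacy for another: $P$ is the relative analytic spectrum of a non-finitely-generated ind-coherent $\cur{O}_V$-algebra, so it is not an object of $\mathbf{Rig}_V$, and one must say something about why the associated fibre space construction $P\times^\pi Y$ makes sense and lands back in $\mathbf{StrP}(V/K)$ (e.g.\ by noting the action on $Y$ factors through an algebraic quotient $H$, so $P\times^\pi Y=P_H\times^H Y$ with $P_H$ the induced $H$-torsor, which is finite type). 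Your parenthetical alternative --- domination by the tensor of the two linearised ample bundles --- is essentially the paper's product argument in disguise (via the Segre embedding), so both proofs are within reach of your sketch. The pull-back compatibility is argued identically in both: $f^*$ on $\mathrm{MIC}$ corresponds to restriction along $\pi_1^\dR(W,w)\to\pi_1^\dR(V,v)$, and relative $\mathbf{Proj}$ commutes with base change by \cite[Theorem 2.3.6]{Con06}.
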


\begin{proof} There are two things to check: firstly that $\cur{U}_V(Y,\rho)$ does not depend on the choice of $\pi_1^\dR(V,v)$-linearised ample line bundle $\cur{L}$, and secondly that we can make the association functorial in $(Y,\rho)$. For the first claim, we note that given two equivariant embeddings $Y\rightarrow \P(E_v)$ and $Y\rightarrow \P(F_v)$ we can simply consider their product
\[ Y\hookrightarrow \P(E_v) \times_K \P(F_v)  \]
and show that the two projection maps induce isomorphisms between appropriate relative Proj constructions. Similarly, to obtain functoriality, we can use the graph construction to reduce to considering closed immersions and projections. These can both be very easily handled.

Finally, functoriality in $f:W\rightarrow V$ follows from the facts that the homomorphism $\pi_1^{\dR}(W,w)\rightarrow \pi_1^\dR(V,v)$ corresponds to $f^*$ on the level of modules with integrable connection, and that $\mathbf{Proj}$ commutes with pull-back of ind-coherent modules by \cite[Theorem 2.3.6]{Con06}.
\end{proof}

With additional polarisability assumptions, `Tannakian reconstruction' theorems are very easy to prove using the classical `linear' versions.

\begin{definition} We say that $Z\in \mathbf{StrP}(V/K)$ is polarisable if there exists $E\in \mathrm{MIC}(V/K)$ and a closed embedding $Z\hookrightarrow \P(E)$ of stratified $V$-varieties.
\end{definition}

Clearly, the functor $\cur{R}_V$ lands inside the full subcategory $\mathbf{StrPol}(V/K)\subset \mathbf{StrP}(V/K)$ consisting of polarisable stratified $V$-varieties.

\begin{theorem}  \label{theo: tanninv}  The functor
\[ \cur{U}_V: \cur{R}_V\rightarrow \mathbf{StrPol}(V/K) \]
is an equivalence of categories.
\end{theorem}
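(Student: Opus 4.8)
The plan is to deduce this projective Tannakian equivalence directly from the classical linear one --- that is, from the equivalence $\mathrm{MIC}(V/K)\simeq \mathrm{Rep}(\pi_1^\dR(V,v))$ underlying Proposition \ref{prop: tann1} --- together with the fact that the relative $\mathbf{Proj}$ construction of \cite{Con06} is compatible with pull-back and base change. The central device is the following dictionary, which I would establish first. Fix $(Y,\rho)\in\cur{R}_V$ with its chosen equivariant embedding $Y\hookrightarrow \P^N_K$ and graded coordinate representation $S_Y$, so that $\cur{U}_V(Y,\rho)=\mathbf{Proj}_{\cur{O}_V}(\cur{S}_Y)$ with $\cur{S}_Y$ the ind-coherent sheaf of graded rings with connection corresponding to $S_Y$. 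Then a closed subvariety $Z\hookrightarrow \cur{U}_V(Y,\rho)$ is stable under the induced stratification precisely when the graded ideal sheaf $\cur{I}\subset\cur{S}_Y$ cutting it out is horizontal for the connection, and via the linear Tannakian equivalence such horizontal graded ideal sheaves correspond exactly to $\pi_1^\dR(V,v)$-stable graded ideals $I\subset S_Y$, hence to closed $\pi_1^\dR(V,v)$-stable subvarieties $Y':=\mathbf{Proj}(S_Y/I)\subset Y$, in such a way that $\cur{U}_V(Y')=\mathbf{Proj}_{\cur{O}_V}(\cur{S}_Y/\cur{I})\cong Z$. This last isomorphism holds because both sides compute the relative $\mathbf{Proj}$ of the graded ring of relative sections of an $f$-ample line bundle, so \cite{Con06} applies. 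I would also record here the fibre computation $v^*\cur{U}_V(Y,\rho)\cong Y$, natural in $(Y,\rho)$: this holds since $v^*\cur{S}_Y=S_Y$ and $\mathbf{Proj}$ commutes with base change, using that $\mathrm{Proj}(S_Y)=Y$ for the very ample $\cur{O}_Y(1)$. In particular $\cur{U}_V$ is faithful and reflects isomorphisms.

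For essential surjectivity, let $Z\in\mathbf{StrPol}(V/K)$ and fix a closed embedding $Z\hookrightarrow \P(E)$ of stratified $V$-varieties with $E\in \mathrm{MIC}(V/K)$. Since $\P(E)\cong \cur{U}_V(\P(E_v),\rho_E)$, where $\rho_E$ is the action induced by the linear $\pi_1^\dR(V,v)$-action on $E_v=v^*E$ (which is polarisable, being linearised by $\cur{O}(1)$ on $\P(E_v)$), the dictionary applied to $\P(E_v)$ produces a closed $\pi_1^\dR(V,v)$-stable subvariety $Y\subset \P(E_v)$ with $\cur{U}_V(Y)\cong Z$. Restricting the $\pi_1^\dR(V,v)$-linearised ample bundle from $\P(E_v)$ to $Y$ shows that the induced action $\rho$ on $Y$ is again polarisable, so $(Y,\rho)\in\cur{R}_V$, as required.

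For full faithfulness it remains, given faithfulness, to prove fullness. So let $(Y,\rho),(Y',\rho')\in\cur{R}_V$ and let $\phi\colon \cur{U}_V(Y,\rho)\to \cur{U}_V(Y',\rho')$ be a morphism of stratified $V$-varieties. I would first check that $\cur{U}_V$ commutes with products, $\cur{U}_V(Y\times_K Y')\cong \cur{U}_V(Y,\rho)\times_V\cur{U}_V(Y',\rho')$, compatibly with the two projections: this follows via the Segre embedding, since the homogeneous coordinate ring of $Y\times_K Y'$ is the Segre product of $S_Y$ and $S_{Y'}$, a subalgebra of $S_Y\otimes_K S_{Y'}$, which corresponds under the linear Tannakian equivalence to the Segre product of $\cur{S}_Y$ and $\cur{S}_{Y'}$, and relative $\mathbf{Proj}$ of the Segre product computes the relative fibre product. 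The graph $\Gamma_\phi$ is then a closed, stratification-stable subvariety of $\cur{U}_V(Y,\rho)\times_V\cur{U}_V(Y',\rho')\cong \cur{U}_V(Y\times_K Y')$, so by the dictionary $\Gamma_\phi=\cur{U}_V(\Gamma)$ for a unique closed $\pi_1^\dR(V,v)$-stable $\Gamma\subset Y\times_K Y'$. Its first projection to $\cur{U}_V(Y,\rho)$ is an isomorphism (being the first projection of a graph), so since $\cur{U}_V$ reflects isomorphisms the map $\Gamma\to Y$ is an isomorphism; hence $\Gamma$ is the graph of an equivariant morphism $\psi\colon Y\to Y'$, and $\cur{U}_V(\psi)$ has the same graph as $\phi$, so $\cur{U}_V(\psi)=\phi$.

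The main obstacle is the dictionary of the first paragraph: one must verify that forming the graded ideal sheaf of a closed subvariety of a relative $\mathbf{Proj}$, recovering a $V$-projective variety as the relative $\mathbf{Proj}$ of its graded ring of relative sections, and interchanging all of this with the linear Tannakian equivalence and with the fibre functor $v^*$, are all legitimate in the rigid analytic setting. This is exactly where one leans on the relative rigid GAGA and relative $\mathbf{Proj}$ formalism of \cite{Con06}; granting it, the compatibility of the connection on $\cur{S}_Y$ with the formation of ideal sheaves and quotient rings (horizontality being preserved by kernels, images, and multiplication maps) is a formal matter, and the rest is bookkeeping.
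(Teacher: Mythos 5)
Your proposal is correct and follows essentially the same strategy as the paper's proof: establish a bijection between $\pi_1^\dR(V,v)$-invariant closed subschemes of $Y$ and closed stratified subvarieties of $\cur{U}_V(Y,\rho)$ via horizontal graded ideals and linear Tannakian duality, deduce essential surjectivity from polarisability, and obtain fullness by the graph construction (reducing to closed immersions and projections). You spell out a few steps the paper leaves implicit — the fibre computation giving faithfulness and reflection of isomorphisms, and the Segre-embedding argument for compatibility with products — but the underlying argument is the same.
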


\begin{proof} This follows very easily from ordinary Tannakian duality and the relative Proj construction introduced in \cite{Con06}. We first claim that given $(Y,\rho)\in \cur{R}_V$, the functor $\cur{U}_V$ induces a bijection between $\pi_1^{\dR}(V,v)$-invariant sub-schemes of $Y$ and closed stratified sub-varieties of $\cur{U}_V(Y,\rho)$. Indeed, injectivity is clear, since for $T\subset Y$ closed and $\pi_1^\dR(V,v)$-invariant we may recover $T$ as the fibre of $\cur{U}_V(T,\rho)$ over $v$.

For surjectivity, we note that by construction $\cur{U}_V(Y,\rho)=\mathbf{Proj}_{\cur{O}_V}(\cur{S}_Y)$ for some ind-coherent graded $\cur{O}_V$-algebra $\cur{S}_Y$, equipped with an integrable connection. The closed sub-variety $\cur{T}\hookrightarrow \cur{U}_V(Y,\rho)$ is therefore given by some quotient
\[ \cur{S}_Y \rightarrow \cur{S}_\cur{T} \]
which, since $T$ is a stratified sub-variety, must be horizontal. There is therefore an induced integrable connection on $\cur{S}_{\cur{T}}$. Moreover, since $\cur{S}_Y$ is the colimit of its coherent, horizontal sub-bundles, the  same is true of $\cur{S}_\cur{T}$/ Hence by the usual Tannakian correspondence this has to come from some $\pi_1^{\dR}(V,v)$-invariant quotient $S_Y=\cur{S}_{Y,v}\rightarrow S_T:=\cur{S}_{\cur{T},v}$. Then $T=\mathrm{Proj}(S_T)$ is the required invariant closed sub-scheme of $Y$.

This immediately implies essential surjectivity of $\cur{U}_V$, and in fact also implies full faithfulness. Indeed, as in Proposition \ref{prop: functor}, to prove full faithfulness it suffices via the graph construction to treat closed immersions and projections from products. The latter is obvious, and we have just proved the former. \end{proof}

\section{Relative stratifications and push-forwards} \label{sec: push}

To construct appropriate `push-forwards' of smooth, projective stratified varieties along a smooth and projective morphism in \cite{dS15}, dos Santos proceeds in two stages. First of all he considers the push-forward of a `relatively stratified variety' $Z$, and then shows that when this arises from a variety with an `absolute' stratification, there is a canonical induced stratification on this push-forward. The analogy to bear in mind from the `linear' case is that the push-forward $f_{\dR*}E$ of some module with integrable connection $E\in \mathrm{MIC}(W/K)$ is constructed by first viewing it as an object in $\mathrm{MIC}(W/V)$, one then puts a connection on $f_{\dR*}E$ by using the fact that $E$ came from $\mathrm{MIC}(W/K)$.

In this section we will achieve the first step by appealing to GAGA, which will tell us that we can actually apply dos Santos' results to provide the required push-forwards. In Section \ref{sec: pf1} below, we will then find another way to complete the proof of Theorem \ref{theo: main1} \emph{without} having to develop the analytic analogue of the `infinitesimal equivalence relations' used in \cite{dS15}, instead by reducing to the situation over the formal polydisc $\spf{K\pow{x_1,\ldots,x_n}}$ over $K$. We start by introducing certain `formal adic spaces', which will allow a slightly better way of talking about stratifications.

\begin{definition} Let $T\hookrightarrow V$ be a closed immersion of analytic $K$-varieties, it is therefore by \cite[Proposition II.7.3.5]{FK13} defined by a coherent ideal sheaf $\cur{I}_T\subset \cur{O}_Z$. Let $Z^{(n)}_T$ denote the closed sub-variety of $V$ defined by the ideal sheaf $\cur{I}_T^{n+1}$ (i.e. $Z_T^{(n)}$ is the $n$th infinitesimal neighbourhood of $T$ in $V$). We define the `formal completion of $V$ along $T$' to be the ind-object $V_{/T}:=\{ V^{(n)}_T \}_n\in \mathrm{Ind}(\mathbf{Rig}_K)$ in the category of analytic $K$-varieties.
\end{definition}

Let $\mathrm{Sh}(\mathbf{Rig}_K)$ denote the category of sheaves on $\mathbf{Rig}_K$ for the analytic topology. Since objects of $\mathbf{Rig}_K$ are locally quasi-compact, we have a fully faithful embedding
\[\mathrm{Ind}(\mathbf{Rig}_K) \hookrightarrow \mathrm{Sh}(\mathbf{Rig}_K) \]
and we will use this to view $V_{/T}$ as such a sheaf.

\begin{example} If $V=\spa{K\tate{x},\cur{V}\tate{x}}$ and $T\hookrightarrow V$ is the zero section, then
\[ V^{(n)}_T = \spa{\frac{K[x]}{(x^{n+1})},\cur{V}+x\frac{K[x]}{(x^{n+1})}}. \]
So we should think of $V_{/T}$ as being given by something like
\[  ``\spa{K\pow{x},\cur{V}+xK\pow{x}}"  \]
where the topology on $K\pow{x}$ has a basis of open subgroups of the form $\varpi^mW\pow{x}+x^nK\pow{x}$. Note that with this topology, $K\pow{x}$ is not an $f$-adic ring, and hence the pair $(K\pow{x},\cur{V}+xK\pow{x})$ is \emph{not} an affinoid ring in the sense of \cite[\S1.1]{Hub96}. It would be interesting to see if there is a more general category of adic spaces in which things like $\spa{K\pow{x},\cur{V}+xK\pow{x}}$ make sense.
\end{example}

By considering the diagonal $\Delta:V\rightarrow V^2$ of a smooth, separated analytic $K$-variety, we obtain the ind-variety that we will denote by $\widehat{P}_V$, which comes equipped with two `projection' maps $p_i:\widehat{P}_Z\rightarrow Z$. With this language, we can rephrase the data of a stratification on some variety $Z \rightarrow V$ as an isomorphism
\[Z \times_{p_0} \widehat{P}_V  \isomto  \widehat{P}_V \times_{p_1} Z \]
in the slice category $\mathrm{Sh}(\mathbf{Rig}_K)_{/\widehat{P}_Z}$ of sheaves over $\widehat{P}_V$, subject to certain obvious conditions. If we let $s: \widehat{P}_V\rightarrow \widehat{P}_V$ denote the map switching the factors and $c :\widehat{P}_V\times_{p_1,V,p_0} \widehat{P}_V \rightarrow \widehat{P}_V$ the map induced by $((v_0,v_1),(v_1,v_2))\mapsto (v_0,v_2)$, then exactly as in \cite[Ch. II, \S1]{Ber74} we can show that the data
\[ p_0,p_1:\widehat{P}_V \rightrightarrows V,\;\;  c :\widehat{P}_V\times_{p_1,V,p_0} \widehat{P}_V \rightarrow \widehat{P}_V ,\;\;\Delta: V\rightarrow\widehat{P}_V,\;\;\ s:\widehat{P}_V\rightarrow \widehat{P}_V\]
forms a `formal groupoid' over $\spa{K}$, and that a stratification on a $V$-variety $Z$ is equivalent to an action of this groupoid.

Similarly, if we are given some morphism $f:W\rightarrow V$, then we may consider the formal completion $\widehat{P}_{W/V}$ along the diagonal $\Delta:W\rightarrow W\times_V W$. We have
\[ p_0,p_1:\widehat{P}_{W/V} \rightrightarrows W,\;\;  c :\widehat{P}_{W/V}\times_{p_1,W,p_0} \widehat{P}_{W/V} \rightarrow \widehat{P}_{W/V} ,\;\;\Delta: W\rightarrow\widehat{P}_{W/V},\;\;\ s:\widehat{P}_{W/V}\rightarrow \widehat{P}_{W/V}\]
exactly as before, giving rise to a groupoid over $V$.

\begin{definition} A $V$-linear stratification on a $W$-variety $Z$ is an action of the groupoid $\widehat{P}_{W/V}\rightrightarrows W$. We denote the category of $W$-varieties with a $V$-linear stratification by $\mathbf{Str}(W/V)$, and the full subcategory of objects which are projective over $W$ by $\mathbf{StrP}(W/V)$
\end{definition} 

These notions satisfy all the usual functorialities, which can be summarised by say that for any commutative square
\[ \xymatrix{ W'\ar[r]\ar[d] & W \ar[d] \\ V'\ar[r] & V  } \]
there is a pull-back functor $\mathbf{Str}(W/V)\rightarrow \mathbf{Str}(W'/V')$, which is transitive in the obvious manner. For example taking $(W/V) \rightarrow (W/K)$ we obtain the forgetful functor $\mathbf{Str}(W/K)\rightarrow \mathbf{Str}(W/V)$. 

Now let us suppose that we have a smooth, projective morphism $f:W\rightarrow V$ of analytic $K$-varieties, with geometrically connected fibres. Note that we \emph{do not} assume at this point that the base $V$  is smooth. If $\mathbf{P}(V)=\mathbf{StrP}(V/V)$ denotes the category of projective $V$-varieties, then as we have just seen there is a pull-back functor
\[f^*:\mathbf{P}(V) \rightarrow \mathbf{StrP}(W/V). \]
We wish to construct an `adjoint' to $f^*$. To do so, suppose therefore that we are given some $Z\in \mathbf{StrP}(W/V)$. Define a functor
\begin{align*} f_*Z :\mathbf{Rig}_V &\rightarrow \mathbf{Sets} \\
T/V &\mapsto \left\{  \text{sections of } Z \times_V T \rightarrow W\times_V T  \right\}
\end{align*}
where sections are considered as certain closed sub-varieties of  $Z \times_V T$. (Thus $f_*Z$ is a sub-functor of the Hilbert functor, the usual flatness condition is redundant for $f_*Z$, since $W$ is flat over $V$.)

\begin{proposition}  \label{prop: rep1} The functor $f_*Z$ is representable by an analytic variety over $V$ which has the following property: for each open affinoid $\spa{A}\subset V$, the restriction of $f_*Z \times_V \spa{A}\rightarrow \spa{A}$ to each of its connected components arises as the analytification of a quasi-projective $A$-scheme.
\end{proposition}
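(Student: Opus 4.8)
The plan is to realise $f_*Z$ as the rigid analytification of Grothendieck's scheme of sections of an algebraic model of $f$, using relative rigid GAGA. First I reduce to an affinoid base. The functor $f_*Z$ is a sheaf on $\mathbf{Rig}_V$ for the analytic topology, and its formation commutes with base change on $V$: if $T\to\spa{A}\to V$ with $\spa{A}\subseteq V$ open affinoid, then $Z\times_V T=(Z\times_V\spa{A})\times_{\spa{A}}T$ and similarly for $W$, so $(f_*Z)(T)$ only depends on $Z\times_V\spa{A}\to W\times_V\spa{A}$. It therefore suffices to represent $f_*Z$ when $V=\spa{A}$ is affinoid; the representing objects over the affinoid opens of a general $V$ then glue, and the asserted structure is local on $V$. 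So assume from now on that $V=\spa{A}$.

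Since $Z\to W\to V$ is a composition of projective morphisms it is projective, and as $V=\spa{A}$ both $Z$ and $W$ embed as closed analytic subvarieties of some $\P^{N,\mathrm{an}}_A$. By rigid analytic GAGA over $\spa{A}$ \cite{Con06} these are exactly the analytifications of closed subschemes of $\P^N_A$, so there are projective $A$-schemes $\mathfrak{W},\mathfrak{Z}$ with $\mathfrak{W}\an=W$, $\mathfrak{Z}\an=Z$; applying GAGA to the graph of $Z\to W$ (a closed subvariety of $(\mathfrak{Z}\times_A\mathfrak{W})\an$, proper over $\spa{A}$) produces an $A$-morphism $\mathfrak{g}\colon\mathfrak{Z}\to\mathfrak{W}$ with $\mathfrak{g}\an=(Z\to W)$, and $\mathfrak{W}\to\spec A$ is flat (indeed smooth, as is $W\to V$). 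Let $H:=\underline{\mathrm{Sec}}(\mathfrak{Z}/\mathfrak{W}/A)$ be the functor $S\mapsto\{\text{sections of }\mathfrak{Z}_S\to\mathfrak{W}_S\}$ on $A$-schemes. Sending a section to its graph, a closed subscheme of $\mathfrak{Z}_S$ which is flat over $S$ because isomorphic to $\mathfrak{W}_S$, identifies $H$ with the locus in $\mathrm{Hilb}(\mathfrak{Z}/A)$ of closed subschemes on which $\mathfrak{g}$ restricts to an isomorphism; since being an isomorphism is an open condition on the base for proper flat families, $H$ is an open subscheme of $\mathrm{Hilb}(\mathfrak{Z}/A)$. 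As $\mathfrak{Z}\to\spec A$ is projective, $\mathrm{Hilb}(\mathfrak{Z}/A)$ is a disjoint union of projective $A$-schemes (Grothendieck), so $H$ is a disjoint union of quasi-projective $A$-schemes; in particular each of its connected components is quasi-projective over $A$.

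Next I claim $f_*Z\cong H\an$ over $V$. The universal section over $H$ is a section of $\mathfrak{Z}\times_A H\to\mathfrak{W}\times_A H$; since analytification commutes with the relevant fibre products and base changes and carries $\mathfrak{g}$ to $Z\to W$, it analytifies to a section of $Z\times_V H\an\to W\times_V H\an$, hence to a natural transformation $H\an\to f_*Z$ of sheaves on $\mathbf{Rig}_V$. To see this is an isomorphism it suffices to check bijectivity on affinoid test objects $T=\spa{B}$. Given a section $s$ over $T$, its graph $\Gamma_s\subseteq Z\times_V T=(\mathfrak{Z}_B)\an$ (the last identification because analytification commutes with base change along $\spa{B}\to\spa{A}$) is closed and $B$-flat, being isomorphic to $W\times_V T$ which is flat over $T$ as $f$ is smooth; so by GAGA over $\spa{B}$ it is the analytification of a $B$-flat closed subscheme $\mathfrak{G}\subseteq\mathfrak{Z}_B$ on which, again by GAGA (now for morphisms of proper $B$-schemes), $\mathfrak{g}_B$ restricts to an isomorphism. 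That is, $\mathfrak{G}$ is the graph of a section $\mathfrak{W}_B\to\mathfrak{Z}_B$, equivalently a morphism $\spec B\to H$ over $\spec A$, equivalently, by the universal property of analytification on affinoid test objects, a morphism $\spa{B}\to H\an$ over $V$. Unwinding the constructions shows these assignments are mutually inverse, with uniqueness coming from full faithfulness of analytification on morphisms of proper $A$-schemes. Finally $H$ is locally of finite type over $A$, so $f_*Z=H\an$ is an analytic variety over $V$ and decomposes as $\coprod_c H_c\an$ over the connected components $H_c$ of $H$; each $H_c$ is quasi-projective over $A$ and $H_c\an$ is connected, which is exactly the asserted structure.

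The GAGA bookkeeping and the appeal to Grothendieck's theory of Hilbert schemes are routine; the step that requires genuine care is the comparison over an affinoid test object $\spa{B}$. One must check that GAGA and flatness transport correctly across the base $\spec B$, rather than just over the ground field $K$, so that a $B$-flat closed analytic subvariety of $(\mathfrak{Z}_B)\an$ really does algebraise to a $B$-flat closed subscheme of $\mathfrak{Z}_B$, and that the algebraic and analytic notions of $\spec B$- respectively $\spa{B}$-point of $H$ and $H\an$ coincide.
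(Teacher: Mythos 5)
Your proof is correct and follows essentially the same route as the paper: reduce to an affinoid base, algebraize $Z\to W\to\spec A$ via relative rigid GAGA, invoke Grothendieck's scheme of sections (the paper simply cites FGA \S4, Variant c., whereas you rederive it as an open subscheme of the Hilbert scheme), and then identify $f_*Z$ with its analytification by checking on affinoid test objects via GAGA over $\spa{B}$.
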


\begin{proof} This is similar in spirit to \cite[Theorem 4.1.3]{Con06}. Since $f_*Z$ is clearly a sheaf for the analytic topology on $\mathbf{Rig}_K$ we may in fact assume that $V=\spa{A}$ is affinoid. Hence, by relative rigid analytic GAGA \cite[Example 3.2.6]{Con06}, $W$ is the analytification of a smooth projective $A$-scheme $W^a$, and $Z\rightarrow W$ is the analyitification of a projective morphism $Z^a\rightarrow W^a$. We consider the corresponding functor
\begin{align*}
f^a_*Z^a :\mathbf{Sch}_A &\rightarrow \mathbf{Sets} \\
T/A &\mapsto \left\{  \text{sections of } Z^a \times_{A} T \rightarrow W^a \times_A T  \right\}
\end{align*}
of locally Noetherian $A$-schemes, which by \cite[\S4, Variant c.]{Gro60} is representable by a disjoint union of quasi-projective $A$-schemes. It therefore suffices to show that the analytification of $f^a_*Z^a$ represents the functor $f_*Z$. Since both are sheaves for the analytic topology, it suffices to check this on affinoids $\spa{B}\rightarrow \spa{A}$. In this case, we can again appeal to rigid analytic GAGA, which says that any closed sub-variety of $Z\times_{\spa{A}} \spa{B}$ is algebraic, i.e. comes from a unique closed sub-scheme of $Z^a_{B}$.
\end{proof}

Let $b:T\rightarrow f_*Z$ be a point of $f_*Z$, corresponding to a section $\tau_b:W\times_V T \rightarrow Z\times_V T$. Pulling back by the two projections $p_i:\widehat{P}_{W/V}\rightarrow W$, i.e. applying  $\widehat{P}_{W/V}\times_{p_i,W}$, we obtain sections 
\[p_i^*(\tau_b): \widehat{P}_{W/V} \times_V T \rightarrow \widehat{P}_{W/V}\times_{p_i,W}Z\times_V T. \]
of $\mathrm{id}\times_{p_i,W}g$. We say that $b$ is \emph{horizontal} if $\epsilon(p^*_0(\tau_b))=p^*_1(\tau_b)$, where $\epsilon$ is the stratification on $Z$.

\begin{definition} We define $f_{\dR*}Z\subset f_*Z$ to be the sub-functor of horizontal sections.
\end{definition}

\begin{proposition} \label{prop: rep2} The sub-functor $f_{\dR*}Z$ is representable by a closed analytic sub-variety of $f_*Z$. If $Z$ is smooth over $W$, then for any open affinoid $\spa{A}\subset V$ the restriction of $f_{\dR*}Z \times_V \spa{A}\rightarrow \spa{A}$ to each of its connected components is projective. 
\end{proposition}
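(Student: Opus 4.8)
The plan is to reduce to the algebraic situation of \cite{dS15} via relative rigid analytic GAGA. Since being a closed immersion is local on the target, and since both conclusions concern the restriction of $f_{\dR*}Z$ over an open affinoid of $V$, I may assume throughout that $V=\spa{A}$ is affinoid, so that $A$ is Noetherian. By \cite[Example 3.2.6]{Con06}, $W$ is then the analytification of a smooth projective $A$-scheme $W^a$, and $Z$ the analytification of a projective $W^a$-scheme $Z^a$, which is moreover smooth over $W^a$ when $Z$ is smooth over $W$.

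The crucial point is that the $V$-linear stratification on $Z$ also algebraises. Indeed $W\times_V W=(W^a\times_A W^a)^{\mathrm{an}}$, so the $n$th infinitesimal neighbourhood $P^n_{W/V}$ of the diagonal is the analytification of $P^n_{W^a/A}$, a closed subscheme of $W^a\times_A W^a$ and hence proper over $A$; similarly $Z\times_{p_i^n}P^n_{W/V}$ is the analytification of $Z^a\times_{p_i^n}P^n_{W^a/A}$, which is projective over $P^n_{W^a/A}$ and so proper over $A$. Applying GAGA over $\spa{A}$ to these proper $A$-schemes, the stratification isomorphisms $\varepsilon_n$ are the analytifications of unique isomorphisms $\varepsilon_n^a$ of the corresponding algebraic objects; the normalisation $\varepsilon_0^a=\mathrm{id}$ and the cocycle condition transfer, being expressed by equalities of morphisms of proper $A$-schemes. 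Thus $Z^a$ carries a relative stratification over $A$ in the sense of \cite{dS15} whose analytification is the given one on $Z$.

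Next I would identify the functors of horizontal sections. Exactly as in the proof of Proposition \ref{prop: rep1}, for an affinoid $\spa{B}\to\spa{A}$ a section of $Z\times_V\spa{B}\to W\times_V\spa{B}$ corresponds by GAGA to a section of $Z^a_B\to W^a_B$, compatibly with pulling back along $p_0^n$ and $p_1^n$; and since horizontality is tested against the analytifications of the $\varepsilon_n^a$, a section is horizontal precisely when the corresponding algebraic one is. Hence, as subfunctors of $f_*Z=(f^a_*Z^a)^{\mathrm{an}}$, we have $f_{\dR*}Z=(f^a_{\dR*}Z^a)^{\mathrm{an}}$, where $f^a_{\dR*}Z^a\subset f^a_*Z^a$ is the locus of horizontal algebraic sections. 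By \cite{dS15}, $f^a_{\dR*}Z^a$ is a closed subscheme of $f^a_*Z^a$, and when $Z^a$ is smooth over $W^a$ each connected component of $f^a_{\dR*}Z^a$ is projective over $A$; the base $\spec{A}$ being Noetherian, the construction of this relative direct image and these properties go through (should the smoothness of $\spec{A}$ over $K$ be needed anywhere in \cite{dS15}, one checks as in the introduction that the proofs do not in fact require it). Analytifying, $f_{\dR*}Z$ is a closed analytic sub-variety of $f_*Z$; and when $Z/W$ is smooth, a connected component of $f_{\dR*}Z\times_V\spa{A}$ is the analytification of a closed subscheme of a projective $A$-scheme, hence projective over $\spa{A}$.

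The main obstacle is the algebraisation of the relative stratification in the second paragraph: everything rests on the observation that the infinitesimal neighbourhoods $P^n_{W/V}$, and their fibre products with $Z$, are analytifications of \emph{proper} $A$-schemes, so that relative GAGA applies to the stratification data itself and not only to $W$ and $Z$; one must then check that the defining compatibilities of a stratification, and the definition of a horizontal section, are phrased so as to survive this transfer. The remaining points — representability of $f_*Z$ and of $f^a_*Z^a$, and the projectivity of the algebraic relative direct image — are imported wholesale from Proposition \ref{prop: rep1} and \cite{dS15} respectively.
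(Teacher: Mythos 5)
Your proof is correct and follows essentially the same approach as the paper's: reduce to $V$ affinoid, algebraise $W$, $Z$, and the stratification via relative rigid analytic GAGA, and then identify $f_{\dR*}Z$ with the analytification of dos Santos's scheme $H_{f^a}(Z^a)$, importing closedness and projectivity from \cite[\S10]{dS15} over the Noetherian base $\spec{A}$. You simply spell out more explicitly the GAGA step for the stratification (observing that the $P^n_{W^a/A}$ and their fibre products with $Z^a$ are proper over $A$), which the paper handles in a single sentence.
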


\begin{proof} Being a closed sub-variety is local, and hence we may in fact assume that $V=\spa{A}$ is affinoid. Now again the whole situation algebrises: we have some smooth projective $f^a:W^a\rightarrow \spec{A}$ and some projective $Z^a\rightarrow W^a$ giving rise to $Z$ upon analytification. Moreover, since the algebraic infinitesimal neighbourhoods give rise to the analytic ones upon analytification, it follows that the analytic stratification on $Z$ comes from a unique  $A$-linear algebraic stratification on $Z^a$. Now we simply note that the results of \cite[\S10]{dS15} apply over any separated, Noetherian base scheme, for example, $\spec{A}$. Translated into algebraic terms, what we have termed `horizontal' corresponds exactly to what dos Santos calls `tangential', hence we may again use rigid analytic GAGA to show that our functor $ f_{\dR*}Z$ is simply the analytification of dos Santos' scheme $H_{f^a}(Z^a)$.
\end{proof}

The defining property of $f_{\dR*}Z$ gives a section $W\times_V f_{\dR*}Z \rightarrow Z\times_V f_{\dR*}Z$, and by composing with the first projection we therefore obtain a morphism $\varepsilon_Z: f^*f_{\dR*}Z\rightarrow Z$ of $W$-varieties. Essentially all of the main properties of $f_{\dR*}Z$ can then be deduced from those proved in \cite{dS15}.

\begin{proposition} \label{prop: props} Let $Z\in \mathbf{StrP}(W/V)$. \begin{enumerate}
\item The map $\varepsilon_Z: f^*f_{\dR*}Z\rightarrow Z$ is horizontal with respect to the pull-back ($V$-linear) stratification on $f_{\dR*}Z$ and the given ($V$-linear)  stratification on $Z$.
\item \label{prop: pp2} Formation of $f_{\dR*}Z$ is compatible with base change: if $V'\rightarrow V$ is a morphism of smooth $K$-varieties, then $(f_{\dR*}Z)\times_V V' \cong f_{\dR*}(Z\times_V V')$, and, via this isomorphism, $\varepsilon_{Z\times_V V'} = \varepsilon_Z \times_V V'$.
\item \label{prop: pp3} If the base $V=\spa{K}$ is a point, $w\in W(K)$ is a rational point, and $Z=\cur{U}_W(Y,\rho)$ for some $(Y,\rho)\in \cur{R}_W$, then $\varepsilon_Z: f^*f_{\dR*}Z \rightarrow Z$ is obtained by applying $\cur{U}_V$ to the closed immersion
\[ Y^{\pi_1^\dR(W,w)}\rightarrow Y, \]
considered as a morphism of $\pi_1^\dR(W,w)$-varieties.
\end{enumerate}
\end{proposition}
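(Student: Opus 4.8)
The plan is to deduce all three assertions from the corresponding algebraic statements in \cite{dS15} by relative rigid analytic GAGA, exactly as in the proofs of Propositions \ref{prop: rep1} and \ref{prop: rep2}, supplemented for part (\ref{prop: pp3}) by the Tannakian description of Theorem \ref{theo: tanninv} together with its algebraic counterpart. The common mechanism is the one already used above: over an affinoid base everything algebrises, the analytic notion of a \emph{horizontal} section coincides with dos Santos' notion of a \emph{tangential} section, and consequently $f_{\dR*}Z$ and the map $\varepsilon_Z$ are the analytifications of $H_{f^a}(Z^a)$ and of the associated algebraic section.

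Assertion (1) is essentially a reformulation of the definition of $f_{\dR*}Z$. By Proposition \ref{prop: rep2} the identity morphism of $f_{\dR*}Z$ classifies a universal horizontal section $\tau$ of $Z\times_V f_{\dR*}Z\rightarrow W\times_V f_{\dR*}Z$, and $\varepsilon_Z$ is $\tau$ followed by the projection onto $Z$; since the $V$-linear stratification on $f^*f_{\dR*}Z=W\times_V f_{\dR*}Z$ is the one pulled back from $W$, unwinding the horizontality identity $\epsilon(p_0^*\tau)=p_1^*\tau$ yields precisely the statement that $\varepsilon_Z$ is a morphism in $\mathbf{StrP}(W/V)$ (alternatively one reduces to affinoid $V$ and quotes the algebraic version from \cite{dS15}). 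For part (\ref{prop: pp2}), since the claim is local on $V$ and on $V'$ we may take both to be affinoid, hence Noetherian; algebrising via relative GAGA as in Proposition \ref{prop: rep2} reduces us to the base-change property of dos Santos' $H_{f^a}$ along $\spec{A'}\rightarrow\spec{A}$, which is valid over any Noetherian base, and we transport the resulting isomorphism back through analytification. That $\varepsilon_{Z\times_V V'}=\varepsilon_Z\times_V V'$ is then immediate from the description of $\varepsilon$ as a universal section followed by a projection, both being manifestly stable under base change; one could equally argue directly that $f_*Z$ is a subfunctor of a Hilbert functor and hence commutes with arbitrary base change, and that the closed condition of horizontality cutting out $f_{\dR*}Z$ is preserved under base change.

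For part (\ref{prop: pp3}) we have $V=\spa{K}$, so $W$ is smooth, projective and geometrically connected over $K$, hence the analytification of such a $K$-scheme $W^a$; by GAGA the category $\mathrm{MIC}(W/K)$ is equivalent to its algebraic counterpart, so $\pi_1^\dR(W,w)$ coincides with the algebraic de\thinspace Rham fundamental group, and $Z=\cur{U}_W(Y,\rho)$ (which is polarisable, being in the image of $\cur{U}_W$) is the analytification of dos Santos' $\cur{U}_{W^a}(Y,\rho)$ equipped with its algebraic absolute stratification, since both result from applying the relative Proj construction to the same Tannakian datum $\cur{S}_Y$ and relative Proj commutes with analytification (relative GAGA, \cite{Con06}, cf. Proposition \ref{prop: functor}). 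By Proposition \ref{prop: rep2} it now suffices to prove the algebraic statement that $H_{f^a}(\cur{U}_{W^a}(Y,\rho))\cong Y^{\pi_1^\dR(W,w)}$, with the tangential section corresponding to the closed immersion $Y^{\pi_1^\dR(W,w)}\hookrightarrow Y$; conceptually this holds because, under the equivalence between polarisable stratified $W$-varieties and polarisable $\pi_1^\dR(W,w)$-varieties, a horizontal section of $Z\rightarrow W$ is the same datum as a $\pi_1^\dR(W,w)$-fixed $K$-point of $Y$, and likewise after base change to an arbitrary $K$-scheme. This assertion is proved in \cite{dS15}, and we simply transport it across the analytification functor.

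The step I expect to be the main obstacle is not any individual computation but the bookkeeping of this GAGA dictionary: verifying that $\cur{U}_W$ agrees with the analytification of dos Santos' $\cur{U}_{W^a}$, that \emph{horizontal} agrees with \emph{tangential} (as already needed for Proposition \ref{prop: rep2}), and that the schemes $H_{f^a}$ behave correctly under the base changes and analytifications invoked above, so that all of dos Santos' constructions and identities carry over. The genuine mathematical content, particularly of (\ref{prop: pp3}), lies entirely in \cite{dS15}.
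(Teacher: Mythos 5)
Your overall strategy for all three parts — algebrise over an affinoid base via relative GAGA, invoke the corresponding statements of dos Santos, and transport back through analytification — is precisely the paper's approach, and your handling of (1) and (2) matches the paper's essentially verbatim (reduce to local affinoid $V$, cite \cite[Proposition 12.1, Corollary 12.4]{dS15}).

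For (3), however, there is a small but genuine omission. After reducing to the algebraic statement $H_{f^a}(\cur{U}_{W^a}(Y,\rho))\cong Y^{\pi_1^\dR(W^a,w)}$ you simply assert that "this assertion is proved in \cite{dS15}", but \cite[Proposition 14.5]{dS15} carries a hypothesis: the stratification on $Z^a$ must be \emph{simple} in dos Santos' sense. The paper explicitly flags this as \emph{the} point that needs checking, and explains that it follows from the discussion in the paragraph preceding the proof of that proposition in \cite{dS15}. Your "conceptual" Tannakian argument gestures at why the identification ought to hold, but it does not substitute for verifying the actual hypothesis of the theorem you are citing — and without that verification, the reduction to \cite{dS15} is not yet complete. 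Everything else in your GAGA bookkeeping (identifying $\cur{U}_W$ with the analytification of $\cur{U}_{W^a}$, identifying \emph{horizontal} with \emph{tangential}, etc.) is correct and aligned with the paper.
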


\begin{proof} Note that the first two are local on $V$, and therefore follow from their algebraic versions \cite[Proposition 12.1, Corollary 12.4]{dS15}. For the third, we wish to algebrise and apply \cite[Proposition 14.5]{dS15}, the point is to check that the induced (algebraic) stratification on $Z$ is simple. This follows from the argument in the paragraph preceding the proof of \cite[Proposition 14.5]{dS15}.
\end{proof}

\section{Formal stratifications, integrability and base change}

It will also be necessary for us to have a formal analogue of the above constructions, working for now over an arbitrary field $F$. Since the basic ideas are essentially identical to those in the previous section, we will not give too many details. Let $t=(t_1,\ldots,t_d)$ be a collection of variables and set $S=\spf{F\pow{t}}$ to be the $d$-dimensional formal polydisc over $F$ (using the $t$-adic topology). We will let $f:X\rightarrow S$ be a smooth morphism of finite type. In this situation we may define the formal groupoids 
\[\widehat{P}_X \rightrightarrows X,\;\; \widehat{P}_{X/S} \rightrightarrows X,\;\; \widehat{P}_S \rightrightarrows S \]
\emph{exactly} as before, and consequently we have the notion of a formal stratification on some formal $X$-scheme $Z$.

If both $Z/X$ and $X/S$ are projective then we may define the push-forward $f_*Z$ as a disjoint union of formal schemes over $S$, as well as the closed sub-scheme $f_{\dR*}Z\subset f_{*}Z$ exactly as in the previous section. If we let $S_n,X_n,Z_n$ denote the mod $t^{n+1}$-reductions, then we could equally well construct $f_{\dR*}Z$ and $f_{*}Z$ as the limits
\[ f_{\dR*}Z= \mathrm{colim}_n ( f_{n,\dR*}Z_n) ,\;\;  f_{*}Z=\mathrm{colim}_n ( f_{n,*}Z_n ) \]
of the algebraic push-forwards along $f_n:X_n\rightarrow S_n$, as considered by dos Santos in \cite[\S10]{dS15} (and that he terms $H_0(Z_n)$ and $H_{f_n}(Z_n)$ respectively). The following represents a simple extension to formal schemes of the results of \cite{dS15}.

\begin{theorem} \label{theo: stratfor} Assume that $X$ is projective over $S$, and that $Z$ is smooth and projective over $X$. Then there exists an $F$-linear stratification on $f_{\dR*}Z$ as a formal $S$-scheme such that the map
\[ f^*f_{\dR*}Z \rightarrow Z \]
is compatible with the $F$-linear stratifications on both sides.
\end{theorem}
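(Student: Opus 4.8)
The plan is to transcribe dos Santos' construction of the Gauss--Manin stratification on a direct image \cite[\S12]{dS15} into the formal setting, using the formal groupoids $\widehat{P}_X\rightrightarrows X$, $\widehat{P}_{X/S}\rightrightarrows X$ and $\widehat{P}_S\rightrightarrows S$ in place of his infinitesimal neighbourhoods of the diagonal. One uses throughout that the object $Z$ carries an $F$-linear stratification $\epsilon$, whose restriction along $\widehat{P}_{X/S}\hookrightarrow\widehat{P}_X$ is the $S$-linear stratification used to form $f_{\dR*}Z$ in the first place (this is really part of the hypotheses: without an $S$-linear stratification $f_{\dR*}Z$ is not defined, and without an $F$-linear one the map $f^*f_{\dR*}Z\rightarrow Z$ carries no $F$-linear structure). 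To give the desired $F$-linear stratification on the formal $S$-scheme $f_{\dR*}Z$ is to give, compatibly in $n$ and subject to the identity and cocycle conditions, isomorphisms
\[ \varepsilon_n:(f_{\dR*}Z)\times_{p_0^n}P^n_S\isomto P^n_S\times_{p_1^n}(f_{\dR*}Z) \]
of formal $P^n_S$-schemes, where $P^n_S$ is the $n$th infinitesimal neighbourhood of the diagonal in $S\times_F S$. (All of $S$, $X$, $Z$, $f_{\dR*}Z$ are ind-objects, so one must also track the colimit in $n$; this bookkeeping is routine and I suppress it.)

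The first ingredient is a base-change property for $f_{\dR*}$, the exact formal analogue of Proposition \ref{prop: props}(\ref{prop: pp2}): writing $p_i:P^n_S\rightarrow S$ for the two projections, $X^{(i)}:=X\times_{S,p_i}P^n_S$, $f^{(i)}:X^{(i)}\rightarrow P^n_S$, and $Z^{(i)}:=Z\times_X X^{(i)}$ with the $P^n_S$-linear stratification pulled back from the $S$-linear one on $Z$, there is a canonical identification $p_i^*(f_{\dR*}Z)\cong f^{(i)}_{\dR*}Z^{(i)}$ taking the unit map $\varepsilon_{Z^{(i)}}$ to $p_i^*\varepsilon_Z$. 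This follows either by repeating the argument of Proposition \ref{prop: rep2} over the base $P^n_S$, or from the description $f_{\dR*}Z=\mathrm{colim}_m(f_{m,\dR*}Z_m)$ together with dos Santos' algebraic base change \cite[Proposition 12.1]{dS15} for the truncations $f_m:X_m\rightarrow S_m$. Granting this, to produce $\varepsilon_n$ it suffices to produce an isomorphism $Z^{(0)}\isomto Z^{(1)}$ of formal $P^n_S$-schemes which is compatible with the two $P^n_S$-linear stratifications, restricts to $\mathrm{id}_Z$ over the diagonal $S\subset P^n_S$, and covers an isomorphism $X^{(0)}\isomto X^{(1)}$ over $P^n_S$; one then sets $\varepsilon_n:=f^{(0)}_{\dR*}(-)$ applied to it, and the identity and cocycle conditions for the $\varepsilon_n$ are inherited from those for the isomorphisms $Z^{(0)}\isomto Z^{(1)}$, which in turn follow from the cocycle condition on $\epsilon$.

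The construction of the isomorphism $Z^{(0)}\isomto Z^{(1)}$ is the heart of the matter, and is where the hypotheses that $f$ is smooth and that $Z$ has an \emph{absolute} stratification are used. Since $f$ is smooth, $X^{(0)}$ and $X^{(1)}$ are two smooth formal deformations of $X/S$ over the nilpotent thickening $S\hookrightarrow P^n_S$, so Zariski-locally on $X$ there is an isomorphism $\phi:X^{(0)}\isomto X^{(1)}$ over $P^n_S$ restricting to $\mathrm{id}_X$; any such $\phi$ amounts to a map $X^{(0)}\rightarrow\widehat{P}_X$ (an infinitesimal path from the identity to $\phi$ inside $X$), and pulling back $\epsilon$ along it yields an isomorphism $Z^{(0)}\isomto\phi^*Z^{(1)}$, that is, a lift of $\phi$ to $Z^{(0)}\isomto Z^{(1)}$, automatically compatible with the relative stratifications by the cocycle condition on $\epsilon$. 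The choice of $\phi$ is not canonical, but two choices differ by an automorphism of $X^{(0)}$ over $P^n_S$ restricting to $\mathrm{id}_X$; such an automorphism is an element of the (truncated) relative groupoid $\widehat{P}_{X/S}$ and acts on $Z^{(0)}$ through the $P^n_S$-linear stratification. The crucial point --- which is precisely dos Santos' analysis of infinitesimal equivalence relations, and the reason horizontality was built into the definition of $f_{\dR*}Z$ --- is that such an element acts trivially on the scheme of horizontal sections $f^{(0)}_{\dR*}Z^{(0)}$, so that the resulting $\varepsilon_n$ does not depend on $\phi$; the local constructions therefore glue to a global one. I expect this independence-and-gluing step to be the only real obstacle, though, as with the corresponding passages of \cite{dS15}, it should reduce to carefully unwinding the groupoid formalism; alternatively one may run the whole argument level-by-level on $f_m:X_m\rightarrow S_m$, checking that dos Santos' proofs go through over the non-smooth bases $S_m$.

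Finally there is the compatibility statement: the map $\varepsilon_Z:f^*f_{\dR*}Z\rightarrow Z$ of Proposition \ref{prop: props}, already known to be horizontal for the $S$-linear stratifications, must be shown horizontal for the $F$-linear ones. The $F$-linear stratification on $f^*f_{\dR*}Z$ is by definition the pull-back along $\widehat{f}:\widehat{P}_X\rightarrow\widehat{P}_S$ of the stratification $\{\varepsilon_n\}$ just constructed, and since the latter was manufactured precisely so as to be compatible, under the base-change identifications of the second paragraph, with the unit maps $\varepsilon_{Z^{(i)}}$ and hence with $\epsilon$, the required horizontality is immediate from the construction. The same bookkeeping shows the entire construction is compatible with the transition maps of the colimit over $n$, which completes the proof.
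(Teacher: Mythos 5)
Your proposal is correct in outline but organized rather differently from the paper's proof. You re-derive the Gauss--Manin construction directly at the formal level: the base-change identification $p_i^*(f_{\dR*}Z)\cong f^{(i)}_{\dR*}Z^{(i)}$ reduces the problem to constructing an isomorphism $Z^{(0)}\isomto Z^{(1)}$; local smooth-deformation theory furnishes local isomorphisms $X^{(0)}\cong X^{(1)}$, which the absolute stratification on $Z$ lifts to $Z^{(0)}\cong Z^{(1)}$; and horizontality of sections absorbs the non-canonicity of the local choice, so that the resulting $\varepsilon_n$ glue and are independent of choices. The paper instead works level-by-level on the truncations $Z_n\rightarrow X_n\rightarrow S_n$ and invokes dos Santos' ``infinitesimal equivalence relations'' machinery as a black box: the construction of the equivalence relation on $f_{n,\dR*}Z_n$ and the horizontality of $\varepsilon_{Z_n}$ with respect to it require no smoothness hypothesis on the base $S_n$ at all, and the \emph{only} step in \cite{dS15} where such smoothness enters is \cite[Proposition 13.4]{dS15}, which upgrades the equivalence relation to a genuine stratification. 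The paper then shows that the proof of that single proposition carries over word for word in the present setting, using local \'etale coordinates for $X/S$ and $Z/X$ together with the formal smoothness of $S$ over $F$ to exhibit the stratification of $Z$ in the explicit form needed for dos Santos' argument. The paper's route is the leaner one, pinpointing exactly where smoothness of the base is needed and delegating everything else to \cite{dS15}; your route is more self-contained conceptually but must re-verify the gluing and cocycle compatibilities --- a gap you flag yourself, noting the alternative of ``checking that dos Santos' proofs go through over the non-smooth bases $S_m$'', which is precisely what the paper does after isolating Proposition 13.4 as the only genuine obstruction.
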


\begin{proof} The point is that this essentially follows from \cite{dS15} upon `taking the limit in $n$', although a little care is needed to achieve this. Consider for all $n$ the `mod $t^{n+1}$ reduction' $Z_n \overset{g_n}{\rightarrow} X_n \overset{f_n}{\rightarrow} S_n$ of everything in sight, we wish to construct a stratification on $f_{n,\dR*}Z_n$ as an $S_n$-scheme such that $f_n^*f_{n,\dR*}Z_n \rightarrow Z_n$ is compatible with the stratifications. We cannot directly apply the results of \cite{dS15} since $S_n$ is not smooth over $F$, but we can get around this as follows.

Firstly, let us recall that dos Santos views stratifications as particular kinds of `infinitesimal equivalence relations', and \emph{without} using any smoothness assumptions on the `base' of the fibration (in our case $S_n$) he constructs an infinitesimal equivalence relation on $f_{n,\dR*}Z_n$ such that $f_n^*f_{n,\dR*}Z_n \rightarrow Z_n$ intertwines these two equivalence relations. The point is then to try to prove that this equivalence relation on $f_{n,\dR*}Z_n$ actually comes from a stratification. This is the content of \cite[Proposition 13.4]{dS15}, and this proposition is the \emph{only} place where smoothness assumptions are used.

But here we can exploit the fact that our situation arises as the mod $t^{n+1}$ reduction of $Z\overset{g}{\rightarrow} X \overset{f}{\rightarrow} S$ with $S$ formally smooth over $K$. In particular, if we choose local \'etale co-ordinates $x=(x_1,\ldots, x_k)$ for $X/S$ and $z=(z_1,\ldots,z_m)$ for $Z/X$, then the stratification on $Z$ corresponds to some section $\cur{O}_Z\pow{dt,dx,dz} \rightarrow \cur{O}_Z\pow{dt,dx}$ of the natural inclusion $\cur{O}_Z\pow{dt,dx} \rightarrow \cur{O}_Z\pow{dt,dx,dz}$. In particular we may therefore choose $m$ elements $F_1,\ldots,F_m$ generating the kernel, locally on $X$ and $Z$. Now reducing mod $t^{n+1}$ we can follow the proof of \cite[Proposition 13.4]{dS15} word for word to conclude.
\end{proof}

For this to be useful to us, we will need to compare this with the set-up considered previously, let us therefore return to that situation. So we have some smooth projective map $f:W\rightarrow V$ of analytic $K$-varieties, with geometrically connected fibres, and $g:Z\rightarrow W$ some smooth and projective stratified variety over $W$. If $v\in V$ is a \emph{smooth} rigid point of $V$, then we may consider the various infinitesimal neighbourhoods $v^{(n)}:=V_{v}^{(n)}$ as before. The point is that now the base change $Z_{v^{(n)}}\rightarrow W_{v^{(n)}} \rightarrow v^{(n)}$ algebrises relative to the ground field $K$, so we may consider $Z_{v^{(n)}}\rightarrow W_{v^{(n)}}$ as a projective morphism of projective $K$-schemes, equipped with a $K$-linear stratification. Thus taking the limit in $n$ we obtain a smooth projective stratified formal scheme $\widehat{Z}_v\overset{g_v}{\rightarrow} \widehat{W}_v \overset{f_v}{\rightarrow} \mathrm{Spf}(\widehat{\cur{O}}_{V,v})$, where $\widehat{\cur{O}}_{V,v}$ is simply considered with the maximal-adic topology (not any kind of $p$-adic topology). Since $v$ was chosen to be a smooth point, we therefore find ourselves in the situation of Theorem \ref{theo: stratfor}. 

Hence we have some projective stratified formal scheme $f_{v,\dR*}\widehat{Z}_v$ over $\mathrm{Spf}(\widehat{\cur{O}}_{V,v})$. Alternatively, since the base change of $f_{\dR*}Z$ to $v^{(n)}$ is a disjoint union of projective $v^{(n)}$-schemes, we may take the limit in $n$ to obtain $f_{\dR*}Z \times_V \mathrm{Spf}(\widehat{\cur{O}}_{V,v})$ which is a disjoint union of projective formal $\widehat{\cur{O}}_{V,v}$-schemes. Note that this is simply notation, since there is no actual map $\mathrm{Spf}(\widehat{\cur{O}}_{V,v}) \rightarrow V$ of locally ringed spaces.

\begin{proposition} \label{prop: bc2} There is a natural isomorphism 
\[ f_{\dR*}Z \times_V \mathrm{Spf}(\widehat{\cur{O}}_{V,v}) \isomto f_{v,dR*}\widehat{Z}_v  \]
of disjoint unions of projective formal schemes over $\widehat{\cur{O}}_{V,v}$, such that the diagram
\[ \xymatrix{  f^*f_{\dR*}Z \times_V \mathrm{Spf}(\widehat{\cur{O}}_{V,v}) \ar[r] \ar[d]_{\varepsilon_Z \times_V\mathrm{Spf}(\widehat{\cur{O}}_{V,v}) } & f_v^*f_{v,dR*}\widehat{Z}_v \ar[d]^{\varepsilon_{\widehat{Z}_v}} \\  Z \times_V \mathrm{Spf}(\widehat{\cur{O}}_{V,v})  \ar@{=}[r]& \widehat{Z}_v   }  \]
of formal schemes with $\widehat{V}_v$-linear stratifications commutes.
\end{proposition}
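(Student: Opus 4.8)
The plan is to reduce the statement to the algebraic setting over $\spec{A}$, exactly as in the proofs of Propositions \ref{prop: rep1} and \ref{prop: rep2}, and then to pass to the inverse limit over the infinitesimal neighbourhoods $v^{(n)}$ of $v$. Since both sides of the asserted isomorphism are compatible with Zariski localisation on $V$, one may assume $V=\spa{A}$ is affinoid; relative rigid analytic GAGA then presents $Z\to W\to V$ as the analytification of a diagram $Z^a\to W^a\to \spec{A}$ of Noetherian schemes with $Z^a\to W^a$ projective, $W^a\to\spec{A}$ smooth and projective, $Z^a$ carrying an $A$-linear algebraic stratification, and --- as recorded in the proof of Proposition \ref{prop: rep2} --- with $f_{\dR*}Z$ equal to the analytification of dos Santos' scheme $H_{f^a}(Z^a)$. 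The point $v$ corresponds to a maximal ideal $\mathfrak{m}_v\subset A$, the neighbourhood $v^{(n)}$ to the Artinian quotient $A_n:=A/\mathfrak{m}_v^{n+1}$, and $\widehat{\cur{O}}_{V,v}=\varprojlim_n A_n$; write $Z_n\to W_n\to\spec{A_n}$ for the reductions mod $\mathfrak{m}_v^{n+1}$.

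The crucial input will be that the algebraic push-forward $Z^a\mapsto H_{f^a}(Z^a)$, together with its counit $\varepsilon$, commutes with arbitrary base change on the (Noetherian) base. This is essentially \cite[Proposition 12.1, Corollary 12.4]{dS15}; although those are stated for smooth bases, one checks --- as flagged in the introduction --- that the construction of $H_{f^a}(Z^a)$ as the closed subfunctor of the relative-sections functor cut out by horizontality uses no smoothness of $\spec{A}$, since both the sections functor and the infinitesimal neighbourhoods $P^n_{W^a/\spec{A}}$ commute with base change. Applying this to $\spec{A_n}\to\spec{A}$ yields, compatibly in $n$, canonical isomorphisms
\[ H_{f^a}(Z^a)\times_{\spec{A}}\spec{A_n}\;\isomto\; H_{f_n}(Z_n) \]
which identify $\varepsilon_{Z^a}\times_{\spec{A}}\spec{A_n}$ with $\varepsilon_{Z_n}$.

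I would then assemble these over $n$. By construction $f_{\dR*}Z\times_V\mathrm{Spf}(\widehat{\cur{O}}_{V,v})$ is the formal scheme presented by the tower $\{H_{f^a}(Z^a)\times_{\spec{A}}\spec{A_n}\}_n$, whereas $f_{v,\dR*}\widehat{Z}_v$ is by definition presented by $\{f_{n,\dR*}Z_n\}_n=\{H_{f_n}(Z_n)\}_n$; the isomorphisms above therefore glue to the desired isomorphism of disjoint unions of projective formal $\widehat{\cur{O}}_{V,v}$-schemes, and the level-$n$ identities $\varepsilon_{Z^a}\times\spec{A_n}=\varepsilon_{Z_n}$ pass to the limit to show that the displayed square commutes on underlying formal schemes. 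Finally, the pull-back $\widehat{V}_v$-linear stratifications on $f_v^*f_{v,\dR*}\widehat{Z}_v$ and on $f^*f_{\dR*}Z\times_V\mathrm{Spf}(\widehat{\cur{O}}_{V,v})$, and the given stratifications on $\widehat{Z}_v$ and on $Z\times_V\mathrm{Spf}(\widehat{\cur{O}}_{V,v})$, all descend from the $A_n$-linear stratifications on $Z_n$ and $H_{f_n}(Z_n)$ appearing in Theorem \ref{theo: stratfor} and Proposition \ref{prop: props}(1); since dos Santos' Proposition 12.1 records the horizontality of $\varepsilon$ at each finite level, the square commutes as a square of formal schemes with $\widehat{V}_v$-linear stratifications.

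The main obstacle I anticipate is precisely the bookkeeping deferred in the introduction: one must verify that dos Santos' base-change results for $H_f$ and for the counit $\varepsilon$ in \cite[\S12]{dS15} go through verbatim over the non-smooth Artinian bases $\spec{A_n}$, and --- more delicately --- that the $\widehat{V}_v$-linear stratification on $f_{v,\dR*}\widehat{Z}_v$ produced by running the proof of \cite[Proposition 13.4]{dS15} in Theorem \ref{theo: stratfor} agrees, at each finite level, with the base change of the $V$-linear stratification carried by $f_{\dR*}Z$, rather than merely being abstractly isomorphic to it. Both demand unwinding dos Santos' constructions (in terms of infinitesimal equivalence relations) rather than quoting his theorems as black boxes; everything else is formal.
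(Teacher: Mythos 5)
Your argument is essentially the paper's proof unpacked: the paper simply applies Proposition~\ref{prop: props}(\ref{prop: pp2}) to the infinitesimal neighbourhoods $v^{(n)}\to V$ and passes to the limit in $n$, which is exactly the GAGA reduction to Artinian bases $\spec{A_n}$ and dos Santos' base-change results for $H_f$ and $\varepsilon$ that you spell out. Your closing worry, though, rests on a slight misreading of the statement: Proposition~\ref{prop: bc2} concerns only the \emph{relative} ($\widehat{V}_v$-linear) stratifications, which on all four corners of the square are either the canonical pull-back stratification on an $f^*$-object or the base change of the given one on $Z$, so their compatibility is purely formal --- the delicate absolute ($F$-linear) stratification produced in Theorem~\ref{theo: stratfor} is a separate structure that plays no role in this proposition and need not be compared with anything here.
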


\begin{proof}
This simply follows from applying Proposition \ref{prop: props}(\ref{prop: pp2}) to the various infinitesimal neighbourhoods $v^{(n)}\rightarrow V$, and then taking the limit in $n$.
\end{proof}

\begin{corollary} \label{cor: flapr} If $Z\in \mathbf{StrP}(W/K)$ is smooth over $W$, then for each connected open affinoid $\spa{A}\subset V$ the fibre product
\[ f_{\dR*}Z \times_V \spa{A} \]
is flat and projective over $\spa{A}$.
\end{corollary}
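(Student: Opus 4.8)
The plan is to strengthen Proposition \ref{prop: rep2} — which already produces, for each connected component of $f_{\dR*}Z\times_V\spa{A}$, projectivity over $\spa{A}$ — into the desired global statement. It suffices to prove two things: (i) the morphism $f_{\dR*}Z\times_V\spa{A}\to\spa{A}$ is flat, and (ii) $f_{\dR*}Z\times_V\spa{A}$ has only finitely many connected components. Granting these, $f_{\dR*}Z\times_V\spa{A}$ is a finite disjoint union of schemes each projective over $\spa{A}$, hence is itself projective over $\spa{A}$ (combine the relatively ample line bundles, using the linear-subspace trick to produce a single projective embedding), and we are done.

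For (i), I would first reduce to the algebraic setting: over the affinoid $\spa{A}$ we have $f_{\dR*}Z\times_V\spa{A}\cong(H_{f^a}(Z^a))\an$ as in the proof of Proposition \ref{prop: rep2}, so it is enough to show $H_{f^a}(Z^a)\to\spec{A}$ is flat; since $A$ is Jacobson and Noetherian this may be checked after completing at closed points of the source, equivalently (by Proposition \ref{prop: bc2}) after passing to $f_{v,\dR*}\widehat{Z}_v$ over $\spf{\widehat{\cur{O}}_{V,v}}$ for $v$ a rigid point of $\spa{A}$. As $V$ is smooth, $v$ is a smooth point, so $\widehat{\cur{O}}_{V,v}\cong K(v)\pow{t_1,\ldots,t_d}$, and by Theorem \ref{theo: stratfor} the formal scheme $f_{v,\dR*}\widehat{Z}_v$ carries a stratification over this base relative to $K$. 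The key observation is that a projective stratified formal scheme over the formal polydisc $\spf{K(v)\pow{t_1,\ldots,t_d}}$ is necessarily the constant family on its special fibre — unwinding the defining isomorphism of the stratification against the explicit formal groupoid $\widehat{P}_{\spf{K(v)\pow{t}}}$ and integrating the resulting `formal parallel transport', which is possible in characteristic $0$ — and in particular is flat. (Alternatively, one embeds it equivariantly in a $\P(E)$ and applies \cite[Proposition 8.9]{Kat70} to the graded pieces of the homogeneous coordinate ring, exactly as in the proof of Proposition \ref{prop: tann1}.)

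For (ii), I would use that, by Proposition \ref{prop: rep2} together with (i), every connected component $C$ of $f_{\dR*}Z\times_V\spa{A}$ is both projective and flat over the connected space $\spa{A}$, hence surjects onto $\spa{A}$; so $C$ meets the fibre over a fixed rigid point $v$, and distinct components meet it in disjoint closed subsets, whence the number of components of $f_{\dR*}Z\times_V\spa{A}$ is bounded by that of this fibre. It therefore suffices to bound the number of connected components of the fibre, which — algebraising as before — is the scheme $H_{f_v}(Z_v)$ of horizontal sections of $Z_v\to W_v$. Now the `evaluation at $w$' morphism $H_{f_v}(Z_v)\to Z_w$ is a monomorphism: by the same characteristic-$0$ integration as above, two horizontal sections agreeing at $w$ agree on the formal neighbourhood of $w$, hence on all of the connected variety $W_v$. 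Since $H_{f_v}(Z_v)$ is locally of finite type (it is a disjoint union of quasi-projective schemes by Proposition \ref{prop: rep1}), this monomorphism is an immersion, exhibiting $H_{f_v}(Z_v)$ as a locally closed subscheme of the projective variety $Z_w$; in particular it has finitely many connected components.

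The step I expect to be the main obstacle is (ii): making precise and correct both the uniqueness of horizontal sections (hence the monomorphism property of evaluation) and the passage from `monomorphism locally of finite type' to `immersion', together with checking that the constant-family description in (i) is legitimately what the stratification of Theorem \ref{theo: stratfor} provides. The remaining manipulations are routine applications of the functorialities set up in Section \ref{sec: push}.
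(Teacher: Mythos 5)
Your overall strategy --- reduce via GAGA to the algebraic situation over $\spec{A}$, establish flatness by passing to formal completions at closed points, and then use the surjection $\pi_0(\text{fibre})\twoheadrightarrow \pi_0(f^a_{\dR*}Z^a)$ (each component being flat and projective over the connected base, hence surjective onto it) to reduce finiteness of $\pi_0$ to a statement about a single fibre --- is the same as the paper's. For part (i) the paper, after invoking Theorem \ref{theo: stratfor} to produce the formal stratification, simply cites \cite[Lemma 6.2]{dS15} rather than re-deriving that a projective stratified scheme over a characteristic-$0$ formal polydisc is a constant deformation of its closed fibre; your ``formal parallel transport'' argument amounts to reproving that lemma and is fine, but note that your suggested alternative via an equivariant embedding into $\P(E)$ is not available at this level of generality, since $Z$ is only assumed smooth over $W$ and not polarisable.

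There is, however, a genuine gap in (ii). A locally-of-finite-type monomorphism into a projective scheme need not be an immersion: the map
\[ \coprod_{n\in\N}\spec{K}\longrightarrow \P^1_K,\qquad n\longmapsto [n:1], \]
is a monomorphism, each component maps by a closed immersion, and the map is locally of finite type, yet the source is not quasi-compact and so cannot be a locally closed subscheme of $\P^1_K$. A monomorphism is unramified and hence is \emph{locally on the source} an immersion --- which is all your hypotheses give --- and even upgrading each component of $H_{f_v}(Z_v)$ to a projective $K(v)$-scheme (so that each maps by a closed immersion) does not bound the number of components, as the same example shows: one can have infinitely many pairwise disjoint nonempty closed subschemes of a Noetherian scheme. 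The paper circumvents this by a Tannakian argument: by Proposition \ref{prop: props}(\ref{prop: pp3}) together with \cite[Proposition 6.4]{dS15}, the fibre is identified with the fixed-point scheme $Y^{\pi_1^\dR(W_v,w)}$ inside the projective $\pi_1^\dR(W_v,w)$-variety $Y$, where $Z_v\cong\cur{U}_{W_v}(Y,\rho)$; this is a closed subscheme of $Y$ and so automatically has finitely many connected components. Your observation that evaluation at $w$ is a monomorphism of the horizontal-sections scheme into $Z_w$ is correct and shows, for instance, that the components of $H_{f_v}(Z_v)$ have pairwise disjoint images in $Z_w$, but by itself it does not force quasi-compactness of $H_{f_v}(Z_v)$, which is what you need.
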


\begin{proof}
We may assume that $V=\spa{A}$, and that $f,g$ come from algebraic maps
\[ f^a:W^a\rightarrow \spec{A},\;\;g^a:Z^a\rightarrow W^a \]
of projective $A$-schemes. We have already observed in Propostion \ref{prop: rep2} that in this situation $f_{\dR*}Z $ is the analytification of a disjoint union $f^a_{\dR*}Z^a$ of projective $A$-schemes. Moreover, for any closed point $\mathfrak{m}\in \spec{A}$, we know by Theorem \ref{theo: stratfor} that the base change $f^a_{\dR*}Z^a \otimes_A \widehat{A}_\mathfrak{m}$ admits a formal stratification. In particular, by applying \cite[Lemma 6.2]{dS15}, we can see that $f^a_{\dR*}Z^a \otimes_A \widehat{A}_\mathfrak{m}$ must be flat over $\widehat{A}_\mathfrak{m}$. 

Since this is true for all $\mathfrak{m}$, it follows that $f^a_{\dR*}Z^a$ is flat over $A$, so the restriction of $f^a_{\dR*}Z^a\rightarrow \spec{A}$ to each of its connected components is flat and projective. Since $\spec{A}$ is connected, each of these components must be set theoretically surjective over $\spec{A}$, so each has a non-empty fibre over $\mathfrak{m}$. Hence the map
\[ \pi_0(f^a_{\dR*}Z^a \otimes_A A/\mathfrak{m}) \rightarrow \pi_0(f^a_{\dR*}Z^a ) \]
on connected components is surjective. Now applying \cite[Proposition 6.4]{dS15} and Proposition \ref{prop: props}(\ref{prop: pp3}) to the fibre $f^a_{\dR*}Z^a \otimes_A A/\mathfrak{m}$ we know that this it has only finitely many connected components. Therefore so does $f^a_{\dR*}Z^a$, it is thus flat and projective over $A$. The claim now follows by taking the analytification.
\end{proof}

\section{Invariance under the stratification and exactness of the homotopy sequence} \label{sec: pf1}

Having constructed the relative push-forwards in \S\ref{sec: push}, what we should do next is emulate the construction of \cite[\S13]{dS15} to endow $f_{\dR*}Z$ with a stratification, at least when $Z/W$ is smooth and comes from an object of $\mathbf{StrP}(W/K)$. We should then show that $f_{\dR*}$ is a `weak adjoint' to $f^*$. In fact, to obtain the proof of Theorem \ref{theo: main1} we can get away with the `formal' version of this result, namely Theorem \ref{theo: stratfor}. Let us put ourselves in the situation of Theorem \ref{theo: main1}, so that $f:W\rightarrow V$ is smooth with geometrically connected fibres, and $V$ is smooth and geometrically connected. Fix $w\in W(K)$ and set $v=f(w)$. Let $(Y,\rho)\in \cur{R}_W$ and assume that $Y$ is smooth over $K$. Let $Z=\cur{U}_W(Y,\rho)\in \mathbf{StrP}(W/K)$. By applying the forgetful functor $\mathbf{StrP}(W/K)\rightarrow \mathbf{StrP}(W/V)$ we may construct $\varepsilon_Z: f^*f_{\dR*}Z\rightarrow Z$ as in the previous section.

\begin{proposition} The map $\varepsilon_Z:f^*f_{\dR*}Z\rightarrow Z$ is a closed immersion.
\end{proposition}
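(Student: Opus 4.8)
The plan is to verify that $\varepsilon_Z$ is a closed immersion after base change to the completed local ring $\widehat{\cur{O}}_{V,v'}$ at every rigid point $v'\in V$, since being a closed immersion is local on the target and can be checked on completed stalks by faithful flatness. After possibly enlarging $K$ we may take $v'$ to be $K$-rational, and since $V$ is smooth we are exactly in the situation set up at the end of \S\ref{sec: push}: by Proposition \ref{prop: bc2} the base change of $\varepsilon_Z$ to $\spf{\widehat{\cur{O}}_{V,v'}}$ is identified with $\varepsilon_{\widehat{Z}_{v'}}:f_{v'}^*f_{v',\dR*}\widehat{Z}_{v'}\rightarrow \widehat{Z}_{v'}$, the map attached to the smooth projective stratified formal scheme $\widehat{Z}_{v'}\rightarrow \widehat{W}_{v'}\rightarrow \spf{\widehat{\cur{O}}_{V,v'}}$. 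Moreover, since $Z=\cur{U}_W(Y,\rho)$ for $(Y,\rho)\in \cur{R}_W$, the stratified scheme $\widehat{Z}_{v'}$ is the pullback of $\cur{U}_W(Y,\rho)$ along the infinitesimal thickenings of $v'$, so it arises from a polarisable action and carries a \emph{simple} (i.e. `absolute' $K$-linear) stratification in dos Santos' sense, exactly as recorded in the proof of Proposition \ref{prop: props}(\ref{prop: pp3}).

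\textbf{Reduction to dos Santos.} Having made this reduction, I would argue as in the proof of Theorem \ref{theo: stratfor}: write $\widehat{Z}_{v'}$ as the colimit in $n$ of the mod-$\mathfrak{m}^{n+1}$ reductions $Z_n\overset{g_n}{\rightarrow}W_n\overset{f_n}{\rightarrow}\spec{\widehat{\cur{O}}_{V,v'}/\mathfrak{m}^{n+1}}$, each of which is a projective morphism of projective $K$-schemes with a $K$-linear (simple) stratification. The assertion that $\varepsilon_{\widehat{Z}_{v'}}$ is a closed immersion then follows by taking the limit in $n$ from the corresponding algebraic statement for $\varepsilon_{Z_n}:f_n^*f_{n,\dR*}Z_n\rightarrow Z_n$, which is precisely (the relevant half of) the content of dos Santos' analysis of the unit map for his push-forward $H_{f_n}$ in \cite{dS15}. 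The one subtlety, exactly as in Theorem \ref{theo: stratfor}, is that the base $\spec{\widehat{\cur{O}}_{V,v'}/\mathfrak{m}^{n+1}}$ is not smooth over $K$, so the \emph{statements} in \cite{dS15} do not literally apply; but the relevant \emph{proofs} do, since the only place smoothness of the base is used is \cite[Proposition 13.4]{dS15}, and here one again exploits that everything is the mod-$\mathfrak{m}^{n+1}$ reduction of the formally smooth situation $\widehat{Z}_{v'}\rightarrow \widehat{W}_{v'}\rightarrow \spf{\widehat{\cur{O}}_{V,v'}}$, choosing local étale coordinates upstairs and following that proof word for word.

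\textbf{Main obstacle.} The genuinely delicate point is not the glueing or the GAGA bookkeeping but checking that one really is entitled to invoke dos Santos' results in the non-smooth-base setting — i.e. tracking through \cite{dS15} that the closed-immersion property of the unit map depends only on the parts of his argument that survive without smoothness of the base (together with the coordinate trick of Theorem \ref{theo: stratfor}), rather than on \cite[Proposition 13.4]{dS15} itself. Once that is granted, the statement is immediate: $\varepsilon_Z$ becomes a closed immersion after passing to $\widehat{\cur{O}}_{V,v'}$ for every rigid point $v'$, hence is a closed immersion.
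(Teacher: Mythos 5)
Your reduction — pass to the formal completion $\spf{\widehat{\cur{O}}_{V,v'}}$ at every rigid point, identify the completed unit map with $\varepsilon_{\widehat{Z}_{v'}}$ via Proposition~\ref{prop: bc2}, and descend the closed-immersion property by faithful flatness and formal GAGA — is formally sound as far as it goes, but the step you single out as ``the genuinely delicate point'' is a genuine gap, not a routine verification. You want to deduce that each $\varepsilon_{Z_n}:f_n^*f_{n,\dR*}Z_n\to Z_n$ over the Artinian base $\spec{\widehat{\cur{O}}_{V,v'}/\mathfrak{m}^{n+1}}$ is a closed immersion ``from dos Santos'', waving at the same ``only Proposition~13.4 uses smoothness'' observation used in the proof of Theorem~\ref{theo: stratfor}. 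But that observation was established there for a different statement (constructing the stratification on the push-forward), and transplanting it here is unjustified: dos Santos' results that the paper actually invokes about $\varepsilon_Z$ (Propositions~12.1, 14.5 and Corollary~12.4 of \cite{dS15}, as packaged in Proposition~\ref{prop: props}) give horizontality, base-change compatibility, and the identification of the fibre with $Y^{\pi_1^\dR(W,w)}\hookrightarrow Y$; none of them says that the unit map of the \emph{relative} push-forward over a non-trivial (let alone non-reduced, non-smooth) base is a closed immersion. For $n\geq 1$ you would be claiming something not in \cite{dS15}, and if you try to prove it you will find yourself reproducing exactly the argument the paper uses — which makes the whole formal-completion detour redundant.

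The paper's proof avoids the infinitesimal neighbourhoods entirely and is considerably more economical. It only checks closed immersion on the \emph{honest} fibres $W_{v'}$ over rigid points $v'$, which is precisely the $n=0$ absolute case of Proposition~\ref{prop: props}(\ref{prop: pp3}): there $\varepsilon_Z$ is $\cur{U}$ applied to the fixed-point inclusion $Y^{\pi_1^\dR(W,w)}\hookrightarrow Y$, hence a closed immersion. Corollary~\ref{cor: flapr} gives that $f_{\dR*}Z$ is projective over each affinoid $\spa{A}\subset V$; GAGA then algebrises everything, and one is reduced to the following purely commutative-algebraic fact: a morphism $i:T^a\to Z^a$ of projective $A$-schemes over a Noetherian ring that is a closed immersion on the fibres over all closed points of $\spec{A}$ is a closed immersion. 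This is proved by observing that $i$ is quasi-finite on closed points of $Z^a$, hence everywhere by \cite[Th\'eor\`eme~13.1.5]{EGA4.3}, hence finite (being projective), and then by checking surjectivity on residue fields plus Nakayama. No result about push-forwards over Artinian bases, no formal GAGA, and no extension of dos Santos beyond the clean absolute case is needed.

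In short: your plan can in principle be completed, but only by independently supplying the missing closed-immersion statement over Artinian bases, which is exactly what the paper's simpler argument provides directly over rigid-point fibres. I would recommend redoing this along the paper's lines: check the fibres at rigid points, establish projectivity of $f_{\dR*}Z$, algebrise, and invoke the quasi-finite-plus-projective argument.
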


\begin{proof} We first note that by Proposition \ref{prop: props} the given map $\varepsilon_Z:f^*f_{\dR*}Z\rightarrow Z$ becomes a closed immersion on the fibre $W_v$ over the given $K$-valued point $v\in V(K)$. By letting $v$ varying and possibly increasing the base field $K$, we deduce that the same is true over any rigid point of $V$. Moreover, we know from Corollary \ref{cor: flapr} that on any connected open affinoid $\spa{A}\subset V$, the base change 
\[ f_{\dR*}Z\times_V \spa{A}\rightarrow \spa{A}\]
is projective over $A$. Since $\varepsilon_Z$ being a closed immersion is local on $V$, we therefore find ourselves in the following general situation. We have a smooth projective morphism $f:W\rightarrow \spa{A}$ over an affinoid base, and a morphism $T\rightarrow Z$ of projective $W$-varieties, which is a closed immersion after passing to any rigid point of $\spa{A}$. We wish to show that $T\rightarrow Z$ is a closed immersion. This is now a situation which can be algebrised, since by rigid analytic GAGA all three of $W,T,Z$ come from projective $A$-schemes, as do all the morphisms between them.

Since the role of $W$ can now be ignored, we can reduce to the following. Let $A$ be a Noetherian ring, and $i:T^a\rightarrow Z^a$ a morphism of projective $A$-schemes, which is an isomorphism on the fibres over all closed points of $A$. Then we wish to show that $i$ is an closed immersion. To see this, note that the quasi-finite locus of $i$ must contain every closed point of $Z^a$,  it must therefore be equal to $Z^a$ by \cite[Th\'{e}or\`{e}me 13.1.5]{EGA4.3}. Therefore $i$ is quasi-finite and projective, hence finite, say locally of the form $\spec{C}\rightarrow \spec{B}$. Moreover, for any maximal ideal $\mathfrak{m}$ of $B$, the induced map  $\frac{B}{\mathfrak{m}} \rightarrow C\otimes_B \frac{B}{\mathfrak{m}}$ is either the zero map or an isomorphism, in particular it is surjective. Hence $i$ is a closed immersion as claimed.
\end{proof}

\begin{definition} We say that a closed sub-variety $T\hookrightarrow Z$ is stable under the stratification if the composite map
\[ T \times_{W,p_0} \widehat{P}_W \rightarrow Z \times_{W,p_0} \widehat{P}_W \overset{\mathrm{pr}_Z\circ \epsilon}{\longrightarrow} Z \]
factors through $T$.
\end{definition}

\begin{theorem} \label{theo: inva}
Let $Z=\cur{U}_W(Y,\rho)$ for some $(Y,\rho) \in \cur{R}_W$ smooth over $K$. Then the closed immersion
\[ f^*f_{\dR*}Z\hookrightarrow Z \]
is stable under the stratification.
\end{theorem}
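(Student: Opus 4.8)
The plan is to reduce the assertion to the \emph{formal}, algebraisable situation governed by Theorem~\ref{theo: stratfor}, using the base change result of Proposition~\ref{prop: bc2}. Unwinding the definition, we must show that the composite
\[ (f^*f_{\dR*}Z)\times_{W,p_0}\widehat{P}_W \longrightarrow Z\times_{W,p_0}\widehat{P}_W \overset{\mathrm{pr}_Z\circ\,\epsilon}{\longrightarrow} Z \]
factors through the closed immersion $f^*f_{\dR*}Z\hookrightarrow Z$. Since $\widehat{P}_W=\mathrm{colim}_n P^n_W$ is the colimit of the infinitesimal neighbourhoods of the diagonal, this is a factorisation statement at each finite level $P^n_W$, i.e. the vanishing of the pull-back of the ideal sheaf $\cur{I}:=\cur{I}_{f^*f_{\dR*}Z}\subset\cur{O}_Z$ along $(f^*f_{\dR*}Z)\times_{W,p_0}P^n_W\to Z$. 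These pull-backs are coherent sheaves on finite-type $W$-schemes, so by Nakayama applied at a rigid point of the support (which necessarily lies over a rigid point of $V$), such a sheaf vanishes as soon as its restriction to every fibre $W_v$ over a rigid point $v\in V$ vanishes; in particular it suffices that its restriction to every formal completion $\widehat{W}_v$ vanishes. After a harmless finite extension of $K$ we may assume $v$ is $K$-rational, hence --- $V$ being smooth --- a smooth rigid point, and stratifications pull back, so we are reduced to proving: for every smooth rigid point $v\in V$, the base change $(f^*f_{\dR*}Z\hookrightarrow Z)\times_V\mathrm{Spf}(\widehat{\cur{O}}_{V,v})$ has image stable under the $K$-linear stratification of $\widehat{Z}_v$.

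Fix such a $v$. By Proposition~\ref{prop: bc2} this base change is identified, as a closed immersion of formal $\widehat{W}_v$-schemes, with $\varepsilon_{\widehat{Z}_v}\colon f_v^*f_{v,\dR*}\widehat{Z}_v\hookrightarrow\widehat{Z}_v$; it is indeed a closed immersion, being the base change of $\varepsilon_Z$, which is one by the preceding Proposition. Now $\widehat{W}_v$ is projective over $\mathrm{Spf}(\widehat{\cur{O}}_{V,v})$, while $\widehat{Z}_v$ is smooth and projective over $\widehat{W}_v$ --- smoothness because $Z=\cur{U}_W(Y,\rho)$ is smooth over $W$, $Y$ being smooth over $K$. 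We are therefore exactly in the situation of Theorem~\ref{theo: stratfor}, whose conclusion is that $\varepsilon_{\widehat{Z}_v}$ is compatible with the $K$-linear stratifications on $f_v^*f_{v,\dR*}\widehat{Z}_v$ and on $\widehat{Z}_v$ (the latter being precisely the restriction to $\widehat{Z}_v$ of the given stratification on $Z\in\mathbf{StrP}(W/K)$). But a closed immersion which is compatible with the stratifications is exactly a closed \emph{stratified} subscheme, so its image is stable under the stratification, as required.

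The main obstacle is the reduction carried out in the first paragraph: one has to be careful that ``stability under the stratification'', which a priori involves the infinite-level formal groupoid $\widehat{P}_W$ --- an object that is not of finite type --- really can be tested one infinitesimal neighbourhood at a time, and thereby turned into the vanishing of honest coherent sheaves on finite-type schemes over $W$, a question that then localises on the smooth base $V$. Once one has landed over the formal completion at a smooth point of $V$ the whole situation algebrises over the ground field $K$, and there is nothing left to do beyond quoting Proposition~\ref{prop: bc2} and Theorem~\ref{theo: stratfor}.
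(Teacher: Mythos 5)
Your proof is correct and rests on the same key ingredients as the paper's: reduce, at a smooth rigid point $v\in V$, to the base change along $\mathrm{Spf}(\widehat{\cur{O}}_{V,v})$; identify that base change with $\varepsilon_{\widehat{Z}_v}$ via Proposition~\ref{prop: bc2}; and then invoke the formal stratification produced by Theorem~\ref{theo: stratfor}. The difference lies in how the pointwise conclusion is globalised. The paper first uses the characteristic-zero observation that a stratification is determined by the induced action of $\cur{D}er_K(\cur{O}_W)$, so that stability collapses to the first-order condition $\partial(f)\in\cur{I}_T$; this is then an element of an $A$-module dying in every $\widehat{A}_\mathfrak{m}$, which forces it to be zero by faithful flatness of $A\rightarrow\prod_\mathfrak{m}\widehat{A}_\mathfrak{m}$. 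You instead keep all the infinitesimal levels $P^n_W$, recast stability as the vanishing of the pull-back of $\cur{I}_T$ to the finite-type space $(f^*f_{\dR*}Z)\times_{W,p_0}P^n_W$, and globalise by a Nakayama argument at a rigid point of the support; vanishing on each formal completion $\widehat{W}_v$ then suffices. Both routes are valid and comparable in length; the paper's derivation-based reduction is tidier since only a single module element must be killed, whereas yours is more explicit about the geometry of ``stable under the stratification''. One small imprecision worth flagging: the Nakayama step should refer to the fibre of the coherent sheaf on $(f^*f_{\dR*}Z)\times_{W,p_0}P^n_W$ over $v$ (not to $W_v$ itself), but the intended meaning is clear and the argument goes through.
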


\begin{proof} Let us write $T$ for $f^*f_{\dR*}Z$ to save on notation. First of all, the claim is local on $V$, which we may therefore assume to be affinoid, $V=\spa{A}$. Applying Theorem \ref{theo: stratfor} and Proposition \ref{prop: bc2} we know that for any rigid point $v\in V$ the base change $T \times_V \mathrm{Spf}(\widehat{\cur{O}}_{V,v})$ (again, this is just notation) is stable under the pull-back stratification. Now, since we are in characteristic zero, stability under the stratification (of either $T$ or $\widehat{T}_v$) amounts simply to stability under the induced action of $\cur{D}er_K(\cur{O}_W)$. Therefore what we need to show is that for any derivation $\partial\in \cur{D}er_K(\cur{O}_W)$, and any local section $f\in \cur{I}_T$ of the ideal of $T$ in $Z$, the section $\partial(f)$ is also in $\cur{I}_T$, i.e. maps to zero in $\cur{O}_T$.

Applying stability of $\widehat{T}_v$ under the stratification, we know that $\partial(f)$ has to map to zero in $\cur{O}_T\otimes_A \widehat{A}_\mathfrak{m}$ (suitably interpreted!) for all maximal ideals of $A$. We can therefore reduce to the following general problem: we are given a regular Noetherian ring $A$, and an element $m\in M$ of some $A$-module, such that $m\mapsto 0$ in $M\otimes_A \widehat{A}_\mathfrak{m}$ for all maximal ideals $\mathfrak{m}$. We must show that $m=0$. But this now follows from faithful flatness of $A\rightarrow \prod_\mathfrak{m} \widehat{A}_\mathfrak{m}$.
\end{proof}

We can now complete the proof of Theorem \ref{theo: main1}.

\begin{proof}[Proof of Theorem \ref{theo: main1}]
Suppose that we have some $E\in \mathrm{MIC}(W/K)$. Then applying Theorem \ref{theo: inva} we obtain a closed sub-variety
\[ f^*f_{\dR*}\P(E) \hookrightarrow \P(E) \]
which is stable under the stratification, and by Proposition \ref{prop: props} recovers the inclusion
\[ \P(E_w)^{\pi_1^\dR(W_v,w)} \hookrightarrow \P(E_w). \]
on the fibre over $w$. Since $f^*f_{\dR*}\P(E)$ is stable under the stratification on $\P(E)$, it therefore acquires an induced stratification such that 
\[  f^*f_{\dR*}\P(E) \hookrightarrow \P(E)  \]
is a closed immersion of stratified $W$-varieties. We now apply Theorem \ref{theo: tanninv}: we can deduce that the closed immersion $f^*f_{\dR*}\P(E) \hookrightarrow \P(E)$ must come from a unique $\pi_1^\dR(W,w)$-invariant closed sub-scheme of $\P(E_w)$. Put differently, we can see that the closed sub-scheme $\P(E_w)^{\pi_1^\dR(W_v,w)} \subset \P(E_w)$ is invariant under $\pi_1^\dR(W,w)$. Hence we may conclude by applying Proposition \ref{prop: mix}.
\end{proof}

\section{The homotopy sequence for algebraic \texorpdfstring{$k$}{k}-varieties}

In the second part of this article, we will use Theorem \ref{theo: main1} to deduce a corresponding result for algebraic $k$-varieties. Recall from \cite{Cre92} that for a geometrically connected $k$-variety $Y$, the category $\mathrm{Isoc}^\dagger(Y/K)$ of overconvergent isocrystals on $Y/K$ is Tannakian, and any point $y\in Y(k)$ provides a fibre functor. Let $\pi_1^\dagger(Y,y)$ denote the corresponding fundamental group. (For brevity we have not included $K$ in the notation, we hope that this will not present a problem.) Suppose that we are given a morphism $f:X\rightarrow S$ of geometrically connected $k$-varieties, and $x\in X(k)$. Write $s=f(x)$. If the fibre $X_s$ is also geometrically connected, then we call the sequence
\[\pi_1^\dagger(X_s,x) \rightarrow\pi_1^\dagger(X,x)\rightarrow\pi_1^\dagger(S,s)\rightarrow 1\]
the \emph{homotopy sequence} associated to the pair $(f,x)$.

\begin{theorem} \label{theo: main2}
Let $f:X\rightarrow S$ be a smooth and projective morphism of smooth $k$-varieties, with geometrically connected fibres and base, and let $x\in X(k)$. Then the homotopy sequence of the pair $(f,x)$ is exact.
\end{theorem}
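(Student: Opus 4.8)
The plan is to reduce Theorem~\ref{theo: main2} to the rigid analytic Theorem~\ref{theo: main1} via the chain of reductions sketched in the introduction, and then transport the exactness statement back across the various comparison maps between overconvergent and de\thinspace Rham fundamental groups. The first reduction is geometric: using a sequence of hyperplane sections together with a weak Lefschetz-type statement for the overconvergent fundamental group (which should follow from the surjectivity part of the expected homotopy sequence, i.e. weak exactness plus the projective exactness criterion of Proposition~\ref{prop: mix} applied to the isocrystal setting), one reduces to the case where $f:X\to S$ is a smooth projective \emph{family of curves}. Here a diagram chase is needed: cutting $X$ by hyperplanes changes neither $\pi_1^\dagger(S)$ nor the relevant quotient of $\pi_1^\dagger(X)$ up to the maps one needs to control, so exactness for the cut-down family implies it for the original one.

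Next, for a family of curves one exploits the fact that (after \'etale localisation on $S$, which is harmless for the fundamental group by the comparison with a Galois cover argument) the family is pulled back from the universal curve $\cur{C}_g\to\cur{M}_g$, and one can lift $S\to\cur{M}_g$ along a smooth formal lift $\cur{S}$ of $S$ over $\cur{V}$. This produces a smooth projective morphism $\wt{f}:\cur{X}\to\cur{S}$ of smooth formal $\cur{V}$-schemes lifting $f$, hence by rigid analytic GAGA a smooth projective morphism $f_K:\cur{X}_K\to\cur{S}_K$ of smooth rigid analytic $K$-varieties. The key input is then the description of the overconvergent fundamental group: $\pi_1^\dagger(S,s)$ is the affine group scheme Tannaka-dual to $\isoc{S/K}$, and using the lift one writes $\isoc{S/K}$ as the $2$-colimit of $\mathrm{MIC}(V_\lambda/K)$ over strict neighbourhoods $V_\lambda$ of the tube $]S[\subset\cur{S}_K$, which dualises to a presentation $\varprojlim_\lambda\pi_1^\dR(V_\lambda,s)\twoheadrightarrow\pi_1^\dagger(S,s)$. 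The same applies to $X$ using strict neighbourhoods $W_\lambda$ of $]X[\subset\cur{X}_K$, which can be chosen so that $f_K$ restricts to a smooth projective morphism $W_\lambda\to V_\lambda$ (shrinking cofinally), and compatibly with the chosen base points. For the fibre one uses that $X_s$ is itself a liftable curve and $]X_s[$ sits inside the $W_\lambda$'s.

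At this point Theorem~\ref{theo: main1} gives exactness of $\pi_1^\dR((W_\lambda)_v,w)\to\pi_1^\dR(W_\lambda,w)\to\pi_1^\dR(V_\lambda,v)\to 1$ for each $\lambda$. The final step is a diagram chase: exactness is preserved under the relevant inverse limits (here one must be slightly careful, since inverse limits are only left exact in general, but surjectivity of $\pi_1^\dR(W_\lambda)\to\pi_1^\dR(V_\lambda)$ passes to the limit for pro-(affine group scheme) systems with the right Mittag--Leffler-type behaviour, and exactness in the middle can be checked Tannakianly on representations, where one takes a colimit rather than a limit), and then under passing to the quotients $\pi_1^\dagger$, since a surjection of pro-objects induces a surjection on quotients and normal generation is compatible with quotients. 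Concretely, to see $\ker(\pi_1^\dagger(X)\to\pi_1^\dagger(S))$ equals the image of $\pi_1^\dagger(X_s)$: surjectivity onto $\pi_1^\dagger(S)$ is immediate from the curve case and weak exactness; and an object of $\isoc{X/K}$ trivial on $X_s$ comes, after passing to a suitable $\lambda$, from an object of $\mathrm{MIC}(W_\lambda/K)$ trivial on $(W_\lambda)_v$, hence by Theorem~\ref{theo: main1} from $V_\lambda$, hence descends to $\isoc{S/K}$ --- which is exactly condition~(1) of the Tannakian exactness criterion, with the other conditions handled similarly using Proposition~\ref{prop: mix} and the projective formulation.

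The main obstacle I expect is the middle reduction --- arranging the lift and the comparison of fundamental groups with strict neighbourhoods \emph{compatibly} with base change along $f$ and with the chosen base points, and in particular ensuring that the neighbourhoods $W_\lambda$ of $]X[$ and $V_\lambda$ of $]S[$ can be chosen cofinally so that $W_\lambda\to V_\lambda$ is smooth and projective and restricts correctly over the base point; the fact that projectivity is not local on the base (noted in the Remark after the definition) means one cannot simply work locally and glue. The Lefschetz reduction to curves also requires the weak-Lefschetz surjectivity statement for $\pi_1^\dagger$, which itself needs the surjectivity (weak exactness) half of the theorem being proved --- so care is needed to ensure the argument is not circular, e.g. by first establishing weak exactness of the overconvergent homotopy sequence in general (via the adjunction $f^*\dashv f_{\dR*}$ on isocrystals, paralleling Lemma~\ref{lemma: dradj}, which only needs $\mathbf{R}^0$ and so is available) and only then using it to perform the Lefschetz cutting.
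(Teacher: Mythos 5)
Your proposal follows the same architecture as the paper's proof: weak exactness via the $\cur{D}^\dagger$-module adjunction $f^*\dashv f_*$, the reduction of exactness to normality of the image of $\pi_1^\dagger(X_s)$, the weak Lefschetz pencil argument driven only by weak exactness (so correctly non-circular, as you observe), Zariski localisation and hyperplane-cutting to reduce to a family of curves, lifting the curve family to characteristic zero via the moduli scheme, and finally deducing the result from Theorem~\ref{theo: main1} applied to a cofinal system of strict neighbourhoods. So the route is the same and your worry about circularity is resolved in precisely the way the paper resolves it.

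However, the final step as you describe it has a genuine gap. You claim that an object of $\mathrm{Isoc}^\dagger(X/K)$ trivial on $X_s$ ``comes, after passing to a suitable $\lambda$, from an object of $\mathrm{MIC}(W_\lambda/K)$ trivial on $(W_\lambda)_v$, hence by Theorem~\ref{theo: main1} from $V_\lambda$, hence descends to $\mathrm{Isoc}^\dagger(S/K)$.'' The last inference does not hold: an object of $\mathrm{MIC}(V_\lambda/K)$, or even a compatible system across all $\lambda$ (i.e.\ an object of the colimit category $\mathrm{MIC}(S,\mathfrak{S}/K)$), need not be overconvergent, and the functor $\mathrm{Isoc}^\dagger(S/K)\to\mathrm{MIC}(S,\mathfrak{S}/K)$ is fully faithful but not essentially surjective. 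So you cannot verify condition~(1) of Theorem~\ref{theo: tann} for $\mathrm{Isoc}^\dagger$ directly this way. Relatedly, the discussion of inverse limits of group schemes and Mittag--Leffler behaviour is a red herring: the paper never passes to an inverse limit of the $\pi_1^\dR(V_\lambda)$ and never needs any such condition. Instead it introduces the Tannaka dual $\pi_1^\mathrm{colim}$ of the $2$-colimit category $\mathrm{MIC}(X,\mathfrak{X}/K)$, proves exactness of the sequence for $\pi_1^\mathrm{colim}$ directly via Theorems~\ref{theo: exactds} and~\ref{theo: main1} (choosing, for each representation, a single $\lambda$ from which it is pulled back), and then uses the fact that the comparison functors $\mathrm{Isoc}^\dagger\to\mathrm{MIC}(-,\mathfrak{(-)}/K)$ are fully faithful with image closed under subquotients, hence induce surjections $\pi_1^\mathrm{colim}\twoheadrightarrow\pi_1^\dagger$. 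Exactness of the top row then forces normality of the image of $\pi_1^\dagger(X_s)\to\pi_1^\dagger(X)$, and Corollary~\ref{cor: rednormal} closes the argument --- sidestepping the non-existent descent entirely. You should also note that the lift of the curve family to characteristic zero requires working with an actual moduli \emph{scheme} (tri-canonically embedded curves for $g\geq 2$, Weierstrass models after a base change $X\times_S X\to X$ for $g=1$, and a projective-bundle argument for $g=0$), rather than the stack $\cur{M}_g$ as you wrote; Elkik's lifting theorem and Raynaud's approximation theorem over the Henselisation do the actual work here and should be cited.
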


The basic idea of the proof will be to reduce to the situation when $X$ is a family of smooth projective curves. In this case we may lift the whole family to characteristic $0$, and then standard results in rigid cohomology will enable us to deduce the exactness we require from Theorem \ref{theo: main1}. Let us note now that formation of $\pi_1^\dagger$ commutes with taking finite extension of $K$ (and hence of $k$), and exactness of a sequence of affine group schemes can be checked after such a finite extension. We will make use of this to freely take finite extensions of $k$ throughout the proof.

Our first task in the proof of Theorem \ref{theo: main2} will be to show the overconvergent analogue of Lemma \ref{lemma: dradj}, i.e. that the homotopy sequence of a smooth projective morphism is always weakly exact. While conceptually simple, the proof will require the existence of push-forwards for overconvergent isocrystals, and showing this will require some rather daunting heavy machinery from the theory of arithmetic $\cur{D}^\dagger$-modules.

\begin{proposition} \label{prop: adjoint} Let $f:X\rightarrow S$ be a smooth projective morphism of smooth $k$-varieties, of constant relative dimension $d$. Then there exists a functor
\[ f_*:\mathrm{Isoc}^\dagger(X/K) \rightarrow \mathrm{Isoc}^\dagger(S/K), \]
right adjoint to 
\[f^*:\mathrm{Isoc}^\dagger(S/K) \rightarrow \mathrm{Isoc}^\dagger(X/K) \]
such that for any $s\in S(k)$ we have $s^*f_*E \cong H^0_\rig(X_s/K,E|_{X_s})$.
\end{proposition}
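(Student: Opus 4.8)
The plan is to set $f_*E:=\mathbf{R}^0f_{\rig*}E$, relative rigid cohomology in degree zero, and to reduce all three assertions to the single statement that $\mathbf{R}^0f_{\rig*}$ is a well-defined functor $\isoc{X}\rightarrow\isoc{S}$ whose formation commutes with arbitrary base change $T\rightarrow S$. Granting this, right adjointness is a formal consequence of the Leray spectral sequence and the projection formula: for $F\in\isoc{S}$ and $E\in\isoc{X}$ one has
\[ \mathrm{Hom}_{\isoc{X}}(f^*F,E)=H^0_\rig\!\left(X/K,\;f^*F^\vee\otimes E\right), \]
and, since $\mathbf{R}^{<0}f_{\rig*}$ vanishes (which is clear on fibres), the bottom edge map of the Leray spectral sequence identifies the right-hand side with $H^0_\rig(S/K,\mathbf{R}^0f_{\rig*}(f^*F^\vee\otimes E))$. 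The projection formula $\mathbf{R}^0f_{\rig*}(f^*F^\vee\otimes E)\cong F^\vee\otimes f_*E$ — valid because $F^\vee$ is a locally free isocrystal on $S$ — turns this into $H^0_\rig(S/K,\mathcal{H}om(F,f_*E))=\mathrm{Hom}_{\isoc{S}}(F,f_*E)$, naturally in both variables. The fibre formula $s^*f_*E\cong H^0_\rig(X_s/K,E|_{X_s})$ is then nothing but base change along $s:\spec{k}\rightarrow S$, together with the identity $\mathbf{R}^0(f_s)_{\rig*}(-)=H^0_\rig(X_s/K,-)$.

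It remains to construct $\mathbf{R}^0f_{\rig*}$ and to prove that it lands in $\isoc{S}$ and is compatible with base change; this is the step that calls for the machinery of arithmetic $\mathcal{D}^\dagger$-modules. The formation of $\mathbf{R}f_{\rig*}$, of the Leray spectral sequence, and of the projection-formula and base-change maps is local on $S$ and independent of the auxiliary choices, so we may assume that $S$ and $X$ fit into frames over a smooth formal $\cur{V}$-scheme and that $f$ lifts to a proper smooth morphism $\mathfrak{f}:\mathfrak{X}\rightarrow\mathfrak{S}$ of smooth formal $\cur{V}$-schemes. Via Berthelot's specialisation functor $\mathrm{sp}_+$, the isocrystal $E$ corresponds to a coherent $\mathcal{D}^\dagger_{\mathfrak{X},\mathbb{Q}}$-module (with the appropriate overconvergence condition along the boundary), and relative rigid cohomology is computed by the pushforward $\mathfrak{f}_+$ in the theory of arithmetic $\mathcal{D}^\dagger$-modules. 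One then invokes Caro's results on (over)coherent complexes: for a proper morphism $\mathfrak{f}_+$ preserves (over)coherence, so $\mathfrak{f}_+E$ is a bounded complex of coherent $\mathcal{D}^\dagger_{\mathfrak{S},\mathbb{Q}}$-modules; using the relative duality isomorphism and the smooth--proper base change theorem in this setting, the cohomology sheaves $\mathbf{R}^if_{\rig*}E$ are coherent, their formation commutes with base change, and — crucially — the one in degree zero lies in the essential image of $\mathrm{sp}_+$ for $S$, hence defines the desired object $f_*E\in\isoc{S}$.

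The main obstacle is precisely this last point: showing that $\mathbf{R}^0f_{\rig*}E$ is genuinely an \emph{overconvergent isocrystal} — coherent, and with an \emph{overconvergent} (not merely convergent) integrable connection — and that its formation commutes with base change. As already flagged in the introduction, constructing $\mathbf{R}^0f_*$ of an overconvergent isocrystal \emph{without} a Frobenius structure along a smooth proper morphism is genuinely hard: one is deprived of many of the standard tools (Kedlaya's semistable reduction, the Caro--Tsuzuki comparison, the full strength of the overholonomic formalism, all of which are usually set up Frobenius-equivariantly), so one must check that the stability statements one actually needs — preservation of (over)coherence under proper pushforward, Berthelot--Kashiwara, and the recognition of isocrystals inside the derived category of $\mathcal{D}^\dagger$-modules — are available in the Frobenius-free setting, at least for the degree-zero cohomology at issue here. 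A secondary, more technical nuisance is the descent argument needed to glue the local constructions over a general (possibly non-liftable) base $S$ and to check independence of the chosen frames, which we carry out by the same cohomological-descent techniques used in the construction of ordinary rigid cohomology.
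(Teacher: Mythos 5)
Your overall strategy -- push forward via arithmetic $\mathcal{D}^\dagger$-modules, then descend the adjunction -- is the same as the paper's, and your proposed object $f_*E$ is the same as theirs (they take $f_*E = \mathrm{sp}^{-1}\mathcal{H}^{-d}(f_+\mathrm{sp}_+E)$, which under the dictionary is precisely $\mathbf{R}^0f_{\rig*}E$). But you prove adjointness by a different route, and you stop exactly where the actual work begins, so there is a real gap. Your route is: Leray spectral sequence for $f_{\rig*}$, plus a projection formula $\mathbf{R}^0f_{\rig*}(f^*F^\vee\otimes E)\cong F^\vee\otimes f_*E$. Each of these is an additional theorem in the Frobenius-free $\mathcal{D}^\dagger$-module world, and neither is handed to you off the shelf. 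The paper's route avoids both: it takes the \emph{pre-existing} adjunction $(f_+,f^+)$ between direct and extraordinary inverse image of coherent $\mathcal{D}^\dagger$-modules (from \cite{Car06b}), checks via \cite[Proposition 4.2.4]{Car11} that $f^+[d]$ restricted to isocrystals is $f^*$, and then needs only a degree-concentration estimate (namely that $f_+\mathrm{sp}_+E[-d]$ lives in degrees $\geq 0$), which it gets from the fibrewise comparison with convergent cohomology. This is a genuinely cheaper argument: one inequality on cohomological amplitude instead of a spectral sequence plus a projection formula.

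The gap you flag at the end is the gap, and you leave it open. You write that ``one must check that the stability statements one actually needs \dots are available in the Frobenius-free setting''; but the paper does that checking, and it is the entire content of the proof. The references that make this work are precisely Caro's Frobenius-free papers: \cite{Car11} (whose title is \emph{Pleine fid\'elit\'e sans structure de Frobenius}, and which supplies the specialisation equivalences $\mathrm{sp}_+:\mathrm{Isoc}^\dagger\isomto\mathrm{Isoc}^{\dagger\dagger}$ and the compatibility $f^+[d]\leftrightarrow f^*$), and \cite{Car15a} (which gives stability of $D^b_\mathrm{isoc}$ under $f_+$, $f^!$ and duality, and the base-change theorem used for the fibre formula). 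Without pinning these down, ``invoke Caro's results'' is a promissory note, not a proof. A secondary simplification you miss: the paper dispenses with your cohomological-descent gluing by observing that a right adjoint, if it exists locally on $S$, glues for free by uniqueness of adjoints -- so one may just assume $S$ is affine and lifts to a triple $(\mathfrak{Q},H,\overline{S})$.
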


The proof will be postponed until \S\ref{sec: adjoint} below, and for now we will deduce several important consequences.

\begin{corollary} \label{cor: weakhes} Let $f:X\rightarrow S$, $x\in X(k)$, $s=f(x)$ be as in Theorem \ref{theo: main2}. Then the sequence
\[ \pi_1^\dagger(X_s,x)\rightarrow \pi_1^\dagger(X,x)\rightarrow \pi_1^\dagger(S,s)\rightarrow 1\]
of affine group schemes is weakly exact.
\end{corollary}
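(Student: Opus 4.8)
The plan is to mirror exactly the proof of Lemma \ref{lemma: dradj}, using Proposition \ref{prop: adjoint} as the overconvergent substitute for the de\thinspace Rham push-forward $f_{\dR*}$ and Theorem \ref{theo: tann} as the abstract input. So let $q\colon\pi_1^\dagger(X_s,x)\rightarrow\pi_1^\dagger(X,x)$ and $p\colon\pi_1^\dagger(X,x)\rightarrow\pi_1^\dagger(S,s)$ be the maps induced, via Tannakian duality, by restriction $\isoc{S}\rightarrow\isoc{X}$ (which is $f^*$) and by $\isoc{X_s}\rightarrow\isoc{X}$ respectively. We must check: that $p$ is faithfully flat; that the composite $p\circ q$ is trivial; and the two numbered hypotheses of Theorem \ref{theo: tann}. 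The first point is the statement that $f^*\colon\isoc{S}\rightarrow\isoc{X}$ is fully faithful, which (by the usual dictionary) follows from the unit $E\rightarrow f_*f^*E$ being an isomorphism for all $E\in\isoc{S}$; using the fibrewise formula $s^*f_*F\cong H^0_\rig(X_s/K,F|_{X_s})$ from Proposition \ref{prop: adjoint}, this reduces to $H^0_\rig(X_s/K,\cur{O})\cong K$, i.e. geometric connectedness of the fibres $X_s$, which is part of the hypotheses. (One should also note that $p\circ q$ being trivial is just the statement that $(f\circ(\text{inclusion of }X_s))^*$ of any object of $\isoc{S}$ is the constant isocrystal, which is clear since $X_s\rightarrow S$ factors through the point $s$.)

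For hypothesis (1) of Theorem \ref{theo: tann}: given $E\in\isoc{X}$, the counit $f^*f_*E\rightarrow E$ is an isomorphism if and only if it is so after pulling back to every closed fibre $X_s$ (this is a statement about a morphism of overconvergent isocrystals being an isomorphism, which can be checked fibrewise since $S$ is covered by the images of its closed points and isocrystals form a nice enough category — alternatively one checks it on the underlying coherent modules over the tube after choosing a frame), and using the fibrewise description of $f_*$ together with full faithfulness of $f^*$ on each fibre, this happens exactly when $E|_{X_s}$ is trivial, i.e. isomorphic to a direct sum of copies of $\cur{O}_{X_s}$; and by adjunction together with connectedness of $S$, an object with $E|_{X_s}$ trivial for one (hence every) $s$ is precisely one of the form $f^*W$. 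For hypothesis (2): given $E\in\isoc{X}$, one takes $f^*f_*E\subset E$ as the required subobject $W$; one must check that $f^*f_*E\rightarrow E$ is a \emph{monomorphism} of isocrystals (again checkable fibrewise, where it reduces to the fact that a subobject of a trivial object in $\isoc{X_s}$ is trivial and that $f_*$ computes the maximal trivial subobject), and that on the fibre over $x$ it cuts out exactly the maximal subrepresentation of $x^*E$ on which $\pi_1^\dagger(X_s,x)=q(L)$ — equivalently the $q(L)$-invariants $(x^*E)^{\pi_1^\dagger(X_s,x)}$ — which is precisely the content of $s^*f_*E\cong H^0_\rig(X_s/K,E|_{X_s})$ specialised at $x$.

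With those verifications in hand, Theorem \ref{theo: tann} applies verbatim and yields weak exactness of
\[ \pi_1^\dagger(X_s,x)\rightarrow\pi_1^\dagger(X,x)\rightarrow\pi_1^\dagger(S,s)\rightarrow 1, \]
which is exactly the assertion of the corollary. I do not expect any genuine obstacle here, since the entire argument is formally identical to Lemma \ref{lemma: dradj} once Proposition \ref{prop: adjoint} is granted; the one point requiring a word of care is the repeated use of ``a morphism of overconvergent isocrystals is an isomorphism (resp.\ monomorphism) iff it is so on every closed fibre,'' which one justifies either by descending to coherent modules on the tube of an affine piece of a frame — where fibrewise checking at closed points of the smooth affine base is legitimate — or by invoking that $\isoc{S}$ embeds into an appropriate category of $\cur{D}^\dagger$-modules where such fibrewise criteria are available. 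The genuinely hard analytic input, namely the construction of $f_*$ itself, is deferred to Proposition \ref{prop: adjoint} and \S\ref{sec: adjoint}, so the proof of this corollary is purely formal.
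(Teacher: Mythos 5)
Your proposal is correct and matches the paper's approach exactly: the paper's proof of this corollary consists of the single line ``We will apply Theorem \ref{theo: tann}. Given Proposition \ref{prop: adjoint}, the proof is identical to Lemma \ref{lemma: dradj} above,'' which is precisely the strategy you carried out. You have simply unpacked that sentence in more detail; the fibrewise checks and the use of $f^*f_*E\subset E$ as the witnessing subobject for hypothesis (2) are the same as in Lemma \ref{lemma: dradj}.
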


\begin{proof} We will apply Theorem \ref{theo: tann}. Given Proposition \ref{prop: adjoint}, the proof is identical to Lemma \ref{lemma: dradj} above.
\end{proof}

\begin{corollary} \label{cor: rednormal} To prove Theorem \ref{theo: main2} it suffices to show that the image of 
\[ \pi_1^\dagger(X_s,x)\rightarrow \pi_1^\dagger(X,x) \]
is a normal subgroup.
\end{corollary}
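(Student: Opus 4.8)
The strategy is to combine the weak exactness already established in Corollary \ref{cor: weakhes} with the elementary observation that weak exactness plus normality of the image yields exactness outright. First I would record that by Corollary \ref{cor: weakhes}, the sequence
\[ \pi_1^\dagger(X_s,x)\overset{q}{\rightarrow} \pi_1^\dagger(X,x)\overset{p}{\rightarrow} \pi_1^\dagger(S,s)\rightarrow 1 \]
is weakly exact, meaning $p$ is faithfully flat, $p\circ q$ is trivial, and $\ker(p)=q(\pi_1^\dagger(X_s,x))^{\mathrm{norm}}$, the normal closure of the image of $q$. Thus the only obstruction to genuine exactness of the homotopy sequence — i.e. to $\ker(p)=q(\pi_1^\dagger(X_s,x))$ — is the discrepancy between $q(\pi_1^\dagger(X_s,x))$ and its normal closure inside $\pi_1^\dagger(X,x)$.

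The key step is therefore simply the implication: if $q(\pi_1^\dagger(X_s,x))$ is already a normal subgroup of $\pi_1^\dagger(X,x)$, then it equals its own normal closure, so $\ker(p)=q(\pi_1^\dagger(X_s,x))^{\mathrm{norm}}=q(\pi_1^\dagger(X_s,x))$, and the sequence is exact by definition. Conversely, if the sequence is exact then $\ker(p)=q(\pi_1^\dagger(X_s,x))$ is a fortiori normal, so the two conditions are in fact equivalent; for the corollary only the forward direction is needed. I would note here that normality is being used in the sense fixed in the notations, namely a closed normal subgroup, and that $q(\pi_1^\dagger(X_s,x))$ is automatically closed (the scheme-theoretic image of a homomorphism of affine group schemes over a field is closed), so the only substantive hypothesis to be checked downstream is the normality.

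There is essentially no obstacle internal to this corollary — it is a formal consequence of Corollary \ref{cor: weakhes} and the definition of weak exactness. The real work is deferred: the genuine difficulty of the paper is relocated into verifying that $\mathrm{im}(\pi_1^\dagger(X_s,x)\rightarrow\pi_1^\dagger(X,x))$ is normal, which will be addressed by the chain of geometric reductions (hyperplane sections, reduction to a liftable family of curves) and the application of Theorem \ref{theo: main1} outlined in the introduction. Thus I would keep the proof of this corollary to a couple of lines, pointing forward to where normality is actually proved.

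The proof itself:

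\begin{proof}
By Corollary \ref{cor: weakhes}, the homotopy sequence
\[ \pi_1^\dagger(X_s,x)\overset{q}{\rightarrow} \pi_1^\dagger(X,x)\overset{p}{\rightarrow} \pi_1^\dagger(S,s)\rightarrow 1 \]
is weakly exact; that is, $p$ is faithfully flat, $p\circ q$ is trivial, and $\ker(p)=q(\pi_1^\dagger(X_s,x))^{\mathrm{norm}}$. The image $q(\pi_1^\dagger(X_s,x))$ is a closed subgroup of $\pi_1^\dagger(X,x)$. If it is moreover normal, then it coincides with its own normal closure, whence $\ker(p)=q(\pi_1^\dagger(X_s,x))^{\mathrm{norm}}=q(\pi_1^\dagger(X_s,x))$ and the sequence
\[ 1\rightarrow \pi_1^\dagger(X_s,x)\rightarrow \pi_1^\dagger(X,x)\rightarrow \pi_1^\dagger(S,s)\rightarrow 1 \]
is exact. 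Hence Theorem \ref{theo: main2} follows once we know that $q(\pi_1^\dagger(X_s,x))$ is normal in $\pi_1^\dagger(X,x)$.
\end{proof}
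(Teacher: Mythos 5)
Your argument is correct and is precisely the intended one: the paper gives no explicit proof for this corollary because it is an immediate consequence of Corollary \ref{cor: weakhes} and the definition of weak exactness, and your identification of the mechanism ($q(L)$ normal $\Rightarrow q(L)=q(L)^{\mathrm{norm}}=\ker(p)$) is exactly what the authors intend the reader to see.

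One small slip: in the display near the end of your proof you write that the sequence
\[ 1\rightarrow \pi_1^\dagger(X_s,x)\rightarrow \pi_1^\dagger(X,x)\rightarrow \pi_1^\dagger(S,s)\rightarrow 1 \]
is exact. With the ``$1\rightarrow$'' on the left this would additionally assert injectivity of $\pi_1^\dagger(X_s,x)\rightarrow \pi_1^\dagger(X,x)$, which neither follows from your argument nor is part of Theorem \ref{theo: main2}. The conclusion you have actually established, and the one Theorem \ref{theo: main2} requires, is exactness of the homotopy sequence
\[ \pi_1^\dagger(X_s,x)\rightarrow \pi_1^\dagger(X,x)\rightarrow \pi_1^\dagger(S,s)\rightarrow 1 , \]
i.e. surjectivity of $p$ together with $\mathrm{im}(q)=\ker(p)$. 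Drop the initial ``$1\rightarrow$'' and the proof is exactly right.
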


This observation will enable us to make some rather major simplifying assumptions in the proof of Theorem \ref{theo: main2}. Another corollary of weak exactness that will play an important role is the following very weak version of the Lefschetz hyperplane theorem for overconvergent fundamental groups.

\begin{theorem} \label{theo: weaklef} Let $X\subset \P^n_k$ be smooth, projective and geometrically connected. Assume that $\dim X\geq 2$ and let $Y=H\cap X$ be a smooth hyperplane section. Let $y\in Y(k)$. Then the normal closure of the image of 
\[ \pi_1^\dagger(Y,y)\rightarrow \pi_1^\dagger(X,y)\]
is the whole of $\pi_1^\dagger(X,y)$.
\end{theorem}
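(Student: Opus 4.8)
The plan is to fiber $X$ by a Lefschetz pencil of hyperplane sections and run an induction on $\dim X$, reducing the $\geq 2$-dimensional case down to a family of curves over the parameter line of the pencil, where Corollary \ref{cor: weakhes} and some diagram chasing take over. First I would choose a generic hyperplane section $Y = H \cap X$; after a finite extension of $k$ we may assume $y \in Y(k)$ and that $Y$ is smooth, geometrically connected (by Bertini, valid since $\dim X \geq 2$). The key geometric input is a Lefschetz pencil: blowing up $X$ along the axis $H \cap H' \cap X$ of a generic pencil gives $\widetilde{X} \to \P^1_k$ whose generic fiber is a smooth hyperplane section, with $Y$ realised as one of the fibers, and such that $\widetilde X \to X$ induces an isomorphism on $\pi_1^\dagger$ (blowing up a smooth center of codimension $\geq 2$ in a smooth variety does not change the overconvergent fundamental group — this should follow by cohomological descent / the fact that rigid cohomology is a birational invariant in this situation, or can be arranged by working over the open locus where the pencil is a genuine morphism and using the weak Lefschetz-type bound on $\pi_0$).

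Next I would apply Corollary \ref{cor: weakhes} to the family $\widetilde{X} \to \P^1_k$ (which is smooth and projective after restricting to the open $U \subset \P^1_k$ over which it is smooth; one handles the discarded finitely many points separately, again using that removing a divisor of high codimension complement does not alter $\pi_1^\dagger$ of the relevant spaces or at worst replaces equalities by normal-closure statements, which is exactly what the theorem allows). This gives weak exactness of
\[ \pi_1^\dagger(\widetilde{X}_t, \tilde y) \to \pi_1^\dagger(\widetilde{X}_U, \tilde y) \to \pi_1^\dagger(U, t) \to 1, \]
and since $U \subset \P^1_k$ with $\P^1_k$ simply connected for overconvergent isocrystals, $\pi_1^\dagger(U,t)$ is "small" — in fact the normal closure of the image of the fiber group surjects onto $\pi_1^\dagger(\widetilde X_U)$. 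Combining this with the identification $\pi_1^\dagger(\widetilde X_U) \twoheadrightarrow \pi_1^\dagger(\widetilde X) \cong \pi_1^\dagger(X)$ and noting that the fiber $\widetilde X_t$ for $t$ generic is a smooth hyperplane section, hence has $\pi_1^\dagger(\widetilde X_t)$ receiving a map from $\pi_1^\dagger(Y)$ up to conjugacy (all smooth hyperplane sections in the pencil are conjugate under monodromy, and $Y$ is one of them), a diagram chase shows the normal closure of the image of $\pi_1^\dagger(Y,y) \to \pi_1^\dagger(X,y)$ is everything. The induction comes in if one needs the fibers of the pencil themselves to be handled: when $\dim X = 2$ the fibers are curves and one is done directly; when $\dim X > 2$ one may first reduce $\dim X$ by applying the statement to a hyperplane section and transitivity of normal closures.

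The main obstacle I expect is the comparison of $\pi_1^\dagger$ under blowing up and under removing the bad fibers of the pencil — i.e. justifying that passing from $X$ to $\widetilde X$ and then to $\widetilde X_U$ does not lose surjectivity of the relevant maps on fundamental groups, only possibly forcing us into normal-closure statements rather than honest equalities (which is harmless here). This is where one must be careful about what is actually known about functoriality and birational invariance of overconvergent fundamental groups; I would isolate a clean lemma of the form "if $j: U \hookrightarrow X'$ is an open immersion of smooth geometrically connected $k$-varieties with $X' \setminus U$ of codimension $\geq 2$, or $\pi: \widetilde X \to X$ a blow-up in a smooth center, then $\pi_1^\dagger(U) \to \pi_1^\dagger(X')$, resp. $\pi_1^\dagger(\widetilde X) \to \pi_1^\dagger(X)$, is surjective" and prove it via the known description of $\isoc{\cdot}$ and restriction functors, or cite the relevant results of Shiho / Kedlaya on purity for overconvergent isocrystals. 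Everything else is formal group-theoretic bookkeeping with normal closures plus the already-established weak exactness.
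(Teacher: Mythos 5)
Your overall geometric strategy — Lefschetz pencil, blow up along the axis, restrict to the smooth locus $b: V\to U$, invoke weak exactness from Corollary~\ref{cor: weakhes}, and compare with the (trivial) $\pi_1^\dagger$ of $\P^1_k$ — is the same as the paper's. But there is a genuine gap in the final diagram chase, and it comes from choosing the pencil generically.

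You write that since $U\subset\P^1_k$ and $\P^1_k$ is simply connected, ``$\pi_1^\dagger(U,t)$ is small --- in fact the normal closure of the image of the fiber group surjects onto $\pi_1^\dagger(\widetilde X_U)$.'' This is false: $U$ is an open curve, so $\pi_1^\dagger(U)$ is enormous, and the normal closure of the image of the fiber group is precisely the kernel of the surjection $\pi_1^\dagger(V,\tilde y)\to\pi_1^\dagger(U,a(\tilde y))$, by weak exactness. So what you actually need to show is that the complementary $\pi_1^\dagger(U)$-part of $\pi_1^\dagger(V)$ dies under the surjection $\pi_1^\dagger(V)\to\pi_1^\dagger(\widetilde X)$. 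There is no formal reason for this to hold; a snake-lemma-type argument needs more input.

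The missing input is a \emph{section} of the pencil through the basepoint. The paper arranges this by choosing the second hyperplane $H'$ so that $y\in Y\cap H'$ (rather than a generic $H'$). The exceptional $\P^1$ over $y$ in the blow-up $\widetilde X\to X$ then maps isomorphically to $\P^1_k$ under the pencil map $a:\widetilde X\to\P^1_k$, giving a section $\sigma:\P^1_k\to\widetilde X$ restricting to a section of $V\to U$. Now $\pi_1^\dagger(V)$ is generated by $\sigma_*(\pi_1^\dagger(U))$ and the normal closure of the image of $\pi_1^\dagger(\widetilde Y)$ (by weak exactness and the splitting), and crucially $\sigma_*(\pi_1^\dagger(U))$ has trivial image in $\pi_1^\dagger(\widetilde X)$ because the diagram commutes with $\sigma$ over $\P^1_k$, where $\pi_1^\dagger(\P^1_k)=1$ (Lemma~\ref{lemma: projbun}). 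That is what lets the normal closure of the fiber group surject onto $\pi_1^\dagger(\widetilde X)\cong\pi_1^\dagger(X)$. Your ``generic pencil'' loses this section, and without it the argument does not close. Separately, the induction on $\dim X$ you envision is not needed: Corollary~\ref{cor: weakhes} applies to any smooth projective morphism with connected fibres, not just families of curves, so the pencil argument goes through directly in all relevant dimensions.
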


\begin{proof} After possibly extending $k$ we ay assume (by Bertini's hyperplane section theorem) that there exists a hyperplane $H'\subset\mathbb{P}^n_k$ such that $y\in Y\cap H'$ and $Z=Y\cap H'$ is smooth. Let $\pi:\widetilde{X}\rightarrow X$ be the blow-up of $X$ along $Z$, and $\widetilde{Y}\rightarrow Y$ the proper transform of $\pi$ along $Y\rightarrow X$.

\begin{claimu} For any $x'\in\pi^{-1}(x)$ the induced map $\pi_*:\pi_1^{\dagger}(\widetilde{X},x')\rightarrow\pi_1^{\dagger}(X,x) $ is an isomorphism.
\end{claimu}

\begin{proof}[Proof of claim] We want to show that the functor $\mathrm{Isoc}^\dagger(X/K)\rightarrow \mathrm{Isoc}^\dagger(\widetilde{X}/K)$ is an equivalence of categories. According to \cite[Proposition 5.3.6]{Ked07} the functor is essentially surjective. It is automatically faithful, hence we must demonstrate that it is full. So let $\pi^*E\rightarrow \pi^*F$ be a morphism. Since $\pi:\widetilde{X}\setminus \pi^{-1}(Z)\rightarrow X\setminus Z$ is an isomorphism, this induces a morphism $E|_{X\setminus Z}\rightarrow F|_{X\setminus Z}$ which by \cite[Theorem 5.2.1]{Ked07} must come from a morphism $E\rightarrow F$.
\end{proof}

Clearly, the claim also holds for the map $\widetilde{Y}\rightarrow Y$, and hence it suffices to show that the normal closure of the image of $\pi_1^{\dagger}(\widetilde{Y},\tilde{y})\rightarrow \pi_1^{\dagger}(\widetilde{X},\tilde{y})$ is the whole of $\pi_1^{\dagger}(\widetilde{X},\tilde{y})$. The pencil of hyperplane sections spanned by $Y=X\cap H$ and $X\cap H'$ furnishes a projective map $a:\widetilde{X}\rightarrow \mathbb{P}^1_k$ whose generic fibre is smooth, and the pre-image of $y$ with respect to $\pi$ gives a section $\sigma$ of $a$. Let $b:V\rightarrow U$ be the smooth locus of $a$, note that $\widetilde{Y}\subset V$ is a fibre of $b$. Since $b$ has geometrically connected fibres, Corollary \ref{cor: weakhes} implies that the induced sequence of group schemes
\[ \pi_1^{\dagger}(\widetilde{Y},\tilde{y})\rightarrow \pi_1^\dagger(V,\tilde y) \rightarrow \pi_1^{\dagger}(U,a(\tilde{y}))\rightarrow 1 \]
is weakly exact. We also have a diagram of affine group schemes
\[ \xymatrix{  \pi_1^{\dagger}(\widetilde{Y},\tilde{y})\ar[r]\ar@{=}[d] & \pi_1^\dagger(V,\tilde y)\ar[r]\ar[d]& \pi_1^{\dagger}(U,a(\tilde{y}))\ar[r]\ar[d]\ar@/_1.5pc/[l]_{\sigma_*} & 1  \\  \pi_1^{\dagger}(\widetilde{Y},\tilde{y})\ar[r] & \pi_1^\dagger(\widetilde X,\tilde y)\ar[r]& \pi_1^{\dagger}(\mathbb{P}^1_k,a(\tilde{y}))\ar[r]\ar@/_1.5pc/[l]_{\sigma_*} & 1 }
\]
such that the middle vertical arrow arrow $\pi_1^\dagger(V,\tilde y)\rightarrow \pi_1^\dagger(\widetilde X,\tilde y)$ is surjective. Since $\pi_1^\dagger(V,\tilde y)$ is generated by $\sigma_*(\pi_1^\dagger(U,a(\tilde{y})))$ and the normal closure of the image of $\pi_1^\dagger(\widetilde{Y},\tilde y)$, it follows that $\pi_1^\dagger(\widetilde X,\tilde y)$ is generated by $\sigma_*(\pi_1^\dagger(\P^1_k,a(\tilde{y})))$ and the normal closure of the image of $\pi_1^\dagger(\widetilde{Y},\tilde y)$. Since $\pi_1^\dagger(\P^1_k,a(\tilde y))=\{1\}$ by Lemma \ref{lemma: projbun} below the result follows.
\end{proof}

\begin{lemma} \label{lemma: projbun} Let $g:Y\rightarrow Z$ be a map which Zariski-locally on $Z$ is a product of projective bundles. Then the induced map
\[ g^*:\mathrm{Isoc}^\dagger(Z/K) \rightarrow \mathrm{Isoc}^\dagger(Y/K) \]
is an equivalence of categories.
\end{lemma}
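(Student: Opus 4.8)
The plan is to use descent to reduce to a single relative projective space $\mathbb{P}^n_Z\to Z$, and then to deduce the statement from the weak exactness of the homotopy sequence already proved in Corollary~\ref{cor: weakhes}, together with the vanishing of the overconvergent fundamental group of projective space. Since $\mathrm{Isoc}^\dagger(-/K)$ satisfies Zariski descent (by \cite{Ked07}), the assertion that $g^*$ is an equivalence is local on $Z$; enlarging $k$ and shrinking, we may assume $Z$ is a smooth, geometrically connected affine $k$-variety with a rational point, and that $Y\cong\mathbb{P}^{a_1}_Z\times_Z\cdots\times_Z\mathbb{P}^{a_m}_Z$ (each projective bundle being Zariski-locally trivial). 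Writing this as a tower $Y=P_1\to P_2\to\cdots\to P_{m+1}=Z$, in which each $P_k\to P_{k+1}$ is the projection $\mathbb{P}^{a_k}_{P_{k+1}}\to P_{k+1}$ of a relative projective space and each $P_k$ is again a smooth $k$-variety, it suffices to treat one step; that is, to show that for a smooth geometrically connected $k$-variety $W$ with a rational point, the functor $g^*\colon\mathrm{Isoc}^\dagger(W/K)\to\mathrm{Isoc}^\dagger(\mathbb{P}^n_W/K)$ is an equivalence.

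For this step, note that $g\colon\mathbb{P}^n_W\to W$ is smooth and projective with geometrically connected fibres, so Corollary~\ref{cor: weakhes} applies to give, for a suitable base point $x$ lying over $w\in W(k)$, a weakly exact sequence
\[ \pi_1^\dagger(\mathbb{P}^n_{k(w)},x)\to \pi_1^\dagger(\mathbb{P}^n_W,x)\to \pi_1^\dagger(W,w)\to 1 . \]
It therefore remains to prove that $\pi_1^\dagger(\mathbb{P}^n_{k'}/K')=\{1\}$ for every finite extension $k'/k$: granting this, the normal closure of the image of the first arrow is trivial, so the kernel of the faithfully flat map $\pi_1^\dagger(\mathbb{P}^n_W,x)\to\pi_1^\dagger(W,w)$ is trivial, whence that map is an isomorphism and $g^*$ is an equivalence; composing up the tower then yields the Lemma. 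To see that $\mathrm{Isoc}^\dagger(\mathbb{P}^n_{k'}/K')\simeq\mathrm{Vec}_{K'}$, one argues as follows: since $\mathbb{P}^n_{k'}$ is proper, overconvergent isocrystals on it coincide with convergent ones, and since $\mathbb{P}^n_{k'}$ admits the smooth proper lift $\mathbb{P}^n_{\mathcal{V}'}$, Berthelot's comparison identifies this category with that of coherent $\mathcal{O}$-modules with integrable connection on the rigid analytic space $\mathbb{P}^{n,\mathrm{an}}_{K'}$, which by rigid analytic GAGA \cite{Con06} is equivalent to $\mathrm{MIC}(\mathbb{P}^n_{K'}/K')$ in the algebraic sense. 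Finally, a coherent sheaf on $\mathbb{P}^n_{K'}$ admitting an integrable connection is trivial: restricted to any line it has degree $0$ on each indecomposable summand, hence is trivial there, so the sheaf is free, and then the connection has the shape $d+A$ with $A$ an $r\times r$ matrix with entries in $\Gamma(\mathbb{P}^n_{K'},\Omega^1_{\mathbb{P}^n_{K'}/K'})=0$; thus it is the trivial object.

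The only genuinely non-formal input is the last point, the triviality of (iso)crystals on projective space; everything else is Zariski descent plus the already-established weak exactness, and I expect the write-up to be short. Two minor cautions: one must check that every intermediate base $P_k$ in the tower of Step~1 is a smooth $k$-variety (so that Corollary~\ref{cor: weakhes} applies), which holds because a projective bundle over a smooth base is smooth; and one should be slightly careful that the identification of overconvergent isocrystals on the proper, good-reduction variety $\mathbb{P}^n_{k'}$ with modules with connection on the generic fibre of the lift is used correctly — but this is standard and the convergence condition is automatic in the proper case.
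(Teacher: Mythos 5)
Your proof is correct, but it takes a genuinely different route from the paper's. The paper argues more directly: Proposition~\ref{prop: adjoint} furnishes the adjoint $g_*$, and one then checks that the unit $F\to g_*g^*F$ and counit $g^*g_*E\to E$ are isomorphisms fibre by fibre, reducing immediately to the structure map $\P^{n_1}_k\times_k\cdots\times_k\P^{n_r}_k\to\spec{k}$, and then (via GAGA) to the classical triviality of modules with integrable connection on a product of projective spaces over a characteristic-zero field. You instead pass through the Tannakian fundamental group: after Zariski-localising and taking the tower of single projective-space fibrations, you invoke the weak exactness of the homotopy sequence (Corollary~\ref{cor: weakhes}) together with the vanishing $\pi_1^\dagger(\P^n_{k'})=1$, and conclude by the trivial-kernel/faithfully-flat argument. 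Both arguments ultimately rest on the same two inputs --- the existence of $g_*$ from Proposition~\ref{prop: adjoint}, and GAGA plus the triviality of flat bundles on projective space --- so neither introduces a circularity. The paper's approach is leaner (no base point or geometric connectedness hypotheses enter), whereas yours carries a bit of extra descent and base-change bookkeeping to bring the Tannakian machinery to bear; on the other hand your route makes the group-theoretic content (``$\P^n$ is simply connected, so a projective-bundle tower changes nothing'') explicit, which is a cleaner statement to have in mind. One stylistic remark: the paper reduces to $\P^{n_1}\times\cdots\times\P^{n_r}$ over a point in one stroke, while you split the product into a tower of single projective spaces; this is equivalent but costs you a check that each intermediate base is again smooth and geometrically connected, which you duly note.
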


\begin{proof} Applying Proposition \ref{prop: adjoint} we get unit and counit maps
\begin{align*} g^*g_*E &\rightarrow E,\;\;E\in \mathrm{Isoc}^\dagger(Y/K) \\
F &\rightarrow g_*g^*F,\;\;F\in \mathrm{Isoc}^\dagger(Z/K),
\end{align*}
we must prove that these are isomorphisms. But this can be checked fibre by fibre, we are therefore reduced to the case of the structure map $\P^{n_1}_k \times_k \ldots \times_k \P^{n_r}_k \rightarrow \spec{k}$. Hence using GAGA we may reduce to the statement that over a field $K$ of characteristic $0$, every vector bundle with integrable connection on $\P^{n_1}_K \times_K \ldots \times_K \P^{n_r}_K$ is trivial.
\end{proof}

\section{Pushforward of overconvergent isocrystals}\label{sec: adjoint}

The purpose of this section is to prove Proposition \ref{prop: adjoint}, stating that if $f:X\rightarrow S$ is a smooth and proper morphism of $k$-varieties, then $f^*:\mathrm{Isoc}^\dagger(S/K)\rightarrow \mathrm{Isoc}^\dagger(X/K)$ has a right adjoint $f_*:\mathrm{Isoc}^\dagger(X/K)\rightarrow \mathrm{Isoc}^\dagger(S/K)$, which commutes with base change and on fibres recovers the zeroeth cohomology group $H^0_\rig$ with coefficients. This will require some heavy machinery from the theory of arithmetic $\cur{D}$-modules, as developed by Berthelot and Caro, and while the result we require is essentially contained in work of Caro, it will take a little care to extract it in the form that we need. This section is extremely technical, and the casual reader will gain very little from going through it in detail; they are advised to simply take Proposition \ref{prop: adjoint} on trust.

With these warnings out of the way, let us begin. To start with, by uniqueness of adjoints, the question is local on the base $S$, which we may therefore assume to be affine. Hence we may assume that there exists a smooth and proper formal scheme $\mathfrak{Q}$ over $W$, a divisor $H\subset \mathfrak{Q} \times_\cur{V} k$ and a locally closed immersion $S\hookrightarrow \mathfrak{Q}$ such that $S=\overline{S}\setminus H$. In other words, $(\mathfrak{Q},H,\overline{S})$ is a `triplet lisse en dehors du diviseur' in the sense of \cite[D\'efinition 3.1.6]{Car11}. By choosing a projective embedding of $X$ over $S$, we may construct another smooth and proper formal scheme $\mathfrak{P}$ over $W$ together with a commutative diagram of embeddings
\[ \xymatrix{ X\ar[r]\ar[d]_f & \mathfrak{P} \ar[d]^g \\ S\ar[r] & \mathfrak{Q}  } \]
such that $\mathfrak{P}\rightarrow \mathfrak{Q}$ is smooth, and $X=\overline{X}\setminus g^{-1}(H)$. Set $T=g^{-1}(H)$, so that again $(\mathfrak{P},T,\overline{X})$ is a `triplet lisse en dehors du diviseur'.

We will let $D^b_\mathrm{surcoh}(\mathfrak{P},T,\overline{X}/K)$ denote the category of bounded complexes of overcoherent $\cur{D}^\dagger$-modules on the triple $(\mathfrak{P},T,\overline{X})$ in the sense of \cite[Notations 1.2.3]{Car15a}, in other words overcoherent complexes of $\cur{D}^\dagger_{\mathfrak{P},\Q}(^\dagger T)$-modules $\cur{E}$ such that $\mathbf{R}\underline{\Gamma}^\dagger_{\overline{X}}(\cur{E})\cong \cur{E}$. We will similarly denote by $D^b_\mathrm{surcoh}(\mathfrak{Q},H,\overline{S}/K)$ the category of bounded complexes of overcoherent $\cur{D}^\dagger$-modules on the triple $(\mathfrak{Q},H,\overline{S})$. Following \cite[D\'{e}finition 1.2.5]{Car15a}, we have full subcategories
\begin{align*}
 \mathrm{Isoc}^{\dagger\dagger}(\mathfrak{P},T,\overline{X}/K) &\subset \mathrm{Surcoh}(\mathfrak{P},T,\overline{X}/K) \\
 \mathrm{Isoc}^{\dagger\dagger}(\mathfrak{Q},H,\overline{S}/K) &\subset \mathrm{Surcoh}(\mathfrak{Q},H,\overline{S}/K)
\end{align*}
consisting of `overcoherent isocrystals' and by \cite[Corollaire 3.5.10, Th\'eor\`eme 4.2.2]{Car11} canonical equivalences of categories
\begin{align*}
\mathrm{sp}_{\overline{X}\hookrightarrow \mathfrak{P},T,+}:\mathrm{Isoc}^\dagger(X/K) &\isomto \mathrm{Isoc}^{\dagger\dagger}(\mathfrak{P},T,\overline{X}/K) \\
\mathrm{sp}_{\overline{S}\hookrightarrow \mathfrak{Q},H,+}:\mathrm{Isoc}^\dagger(S/K) &\isomto \mathrm{Isoc}^{\dagger\dagger}(\mathfrak{Q},H,\overline{S}/K),
\end{align*}
we will denote inverse functors by $\mathrm{sp}^{-1}_{-}$. There are full subcategories
\begin{align*}
D^b_\mathrm{isoc}(\mathfrak{P},T,\overline{X}/K) &\subset D^b_\mathrm{surcoh}(\mathfrak{P},T,\overline{X}/K) \\
D^b_\mathrm{isoc}(\mathfrak{Q},H,\overline{S}/K) &\subset D^b_\mathrm{surcoh}(\mathfrak{Q},H,\overline{S}/K)
\end{align*}
consisting of objects whose cohomology sheaves are overcoherent isocrystals.

Let $d$ denote the relative dimension of $f$, $\mathbf{D}_T$ (resp. $\mathbf{D}_H$) the $\cur{D}^\dagger_{\mathfrak{P},\Q}(^\dagger T)$-linear (resp. $\cur{D}^\dagger_{\mathfrak{Q},\Q}(^\dagger H)$-linear) dual functor, and $g_+, g^!$ the direct and inverse image functors between $\cur{D}^\dagger_{\mathfrak{P},\Q}(^\dagger T)$ and $\cur{D}^\dagger_{\mathfrak{Q},\Q}(^\dagger H)$-modules. Applying \cite[Proposition 3.1.7, Corollaire 3.5.10]{Car11} and \cite[Th\'eor\`eme 3.3.1]{Car15a} we have factorisations
\begin{align*}
f^!:=\mathbf{R}\underline{\Gamma}^\dagger_{\overline{X}}\circ g^! : D^b_\mathrm{isoc}(\mathfrak{Q},H,\overline{S}/K)&\rightarrow  D^b_\mathrm{isoc}(\mathfrak{P},T,\overline{X}/K) \\
f_+:=g_+: D^b_\mathrm{isoc}(\mathfrak{P},T,\overline{X}/K)&\rightarrow  D^b_\mathrm{isoc}(\mathfrak{Q},H,\overline{S}/K) \\
\mathbf{D}:=\mathbf{D}_T:D^b_\mathrm{isoc}(\mathfrak{P},T,\overline{X}/K)&\rightarrow  D^b_\mathrm{isoc}(\mathfrak{P},T,\overline{X}/K) \\
\mathbf{D}:=\mathbf{D}_H:D^b_\mathrm{isoc}(\mathfrak{Q},H,\overline{S}/K)&\rightarrow  D^b_\mathrm{isoc}(\mathfrak{Q},H,\overline{S}/K). 
\end{align*}
Set $f^+:=\mathbf{D}\circ f^!\circ \mathbf{D}$. By \cite[Proposition 4.2.4]{Car11} the diagram
\[\xymatrix{\mathrm{Isoc}^\dagger(S/K) \ar[r]^-{\mathrm{sp}_{\overline{S}\hookrightarrow \mathfrak{Q},H,+}}\ar[d]_{f^*} & D^b_\mathrm{isoc}(\mathfrak{Q},H,\overline{S}/K) \ar[d]^{f^+[d]} \\
\mathrm{Isoc}^\dagger(X/K) \ar[r]^-{\mathrm{sp}_{\overline{X}\hookrightarrow \mathfrak{P},D,+}} &D^b_\mathrm{isoc}(\mathfrak{P},D,\overline{X}/K)  } \]
is 2-commutative, and by combining \cite[Th\'{e}or\`{e}mes 1.2.7, 1.2.9]{Car06b} we can see that $f_+$ and $f^+$ are adjoint functors. Putting this all together, we obtain a natural isomorphism
\[ \mathrm{Hom}_{\mathrm{Isoc}^\dagger(X/K)}(f^*F,E)\cong \mathrm{Hom}_{D^b_\mathrm{isoc}(\mathfrak{Q},H,\overline{S}/K)}(\mathrm{sp}_{\overline{S}\hookrightarrow \mathfrak{Q},H,+}F,f_+\mathrm{sp}_{\overline{X}\hookrightarrow \mathfrak{P},T,+}E[-d]) \]
for any $E\in \mathrm{Isoc}^\dagger(X/K)$ and $F\in \mathrm{Isoc}^\dagger(S/K)$. To complete the proof of Proposition \ref{prop: adjoint}, it suffices to show that we have
\[ s^*\mathrm{sp}^{-1}_{\overline{S}\hookrightarrow \mathfrak{Q},H}\cur{H}^{i-d}(f_+\mathrm{sp}_{\overline{X}\hookrightarrow \mathfrak{P},D,+}E) \cong H^i_\rig(X_s/K,E|_{X_s}) \]
since this will imply that $f_+\mathrm{sp}_{\overline{X}\hookrightarrow \mathfrak{P},D,+}E[-d]$ is concentrated in degrees $\geq0$, and hence that
\[\mathrm{Hom}_{D^b_\mathrm{isoc}(\mathfrak{Q},H,\overline{S})}(\mathrm{sp}_{\overline{S}\hookrightarrow \mathfrak{Q},H,+}F,f_+\mathrm{sp}_{\overline{X}\hookrightarrow \mathfrak{P},D,+}E[-d]) = \mathrm{Hom}_{\mathrm{Isoc}^\dagger(S/K)}(F,\mathrm{sp}^{-1}_{\overline{S}\hookrightarrow \mathfrak{Q},H}\cur{H}^{-d}(f_+\mathrm{sp}_{\overline{X}\hookrightarrow \mathfrak{Q},D,+}E)).\]
Therefore taking 
\[ f_*E=\mathrm{sp}^{-1}_{\overline{S}\hookrightarrow \mathfrak{Q},H}\cur{H}^{-d}(f_+\mathrm{sp}_{\overline{X}\hookrightarrow \mathfrak{P},D,+}E) \]
will do the trick. To prove the fibre-wise comparison with cohomology, we note that by combining \cite[Proposition 4.2.4, Th\'eor\`eme 5.2.5]{Car11} and \cite[Th\'eor\`eme 4.4.2]{Car15a}, we have an isomorphism
\[ s^*\mathrm{sp}^{-1}_{\overline{S}\hookrightarrow \mathfrak{Q},H}\cur{H}^{i-d}(f_+\mathrm{sp}_{\overline{X}\hookrightarrow \mathfrak{P},D,+}E) \cong H^{i-d}(f_{s,+}\mathrm{sp}_{X_s\hookrightarrow \mathfrak{P}_{\tilde{s}},+}E|_{X_s}),  \]
where $\tilde{s}$ denotes some lift of $s$ to a $\cur{V}$-point of $\mathfrak{Q}$, and $\mathfrak{P}_{\tilde s}$ the fibre of $g$ over $\tilde{s}$. We may therefore reduce to Lemma \ref{lemma: conv} below. 

\begin{lemma}\label{lemma: conv} Let $X\hookrightarrow \mathfrak{P}$ be a closed embedding of a smooth $d$-dimensional $k$-variety into a smooth formal $\cur{V}$-scheme, and let $f:\mathfrak{P}\rightarrow \spf{\cur{V}}$ denote the structure morphism. Let $E$ be a convergent isocrystal on $X$. Then for all $i\in \Z$ we have an isomorphism
\[H^{i-d}(f_+ \mathrm{sp}_{X\hookrightarrow \mathfrak{P},+}E) \cong H^i_\mathrm{conv}(X/K,E) \]
of $K$-vector spaces.
\end{lemma}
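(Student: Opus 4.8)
The plan is to reduce this statement — a comparison between the zeroth-degree part of the arithmetic $\cur{D}^\dagger$-module pushforward $f_+ \mathrm{sp}_{X \hookrightarrow \mathfrak{P},+}E$ and rigid (convergent) cohomology with coefficients — to known compatibility results of Caro and Berthelot relating the arithmetic $\cur{D}^\dagger$-module formalism with the de Rham/rigid cohomology theory of (over)convergent isocrystals. First I would recall that, since the statement only concerns the closed embedding $X \hookrightarrow \mathfrak{P}$ into a smooth formal $\cur{V}$-scheme with structure morphism $f:\mathfrak{P}\rightarrow \spf{\cur{V}}$, there is no divisor to worry about: we may take $T=\emptyset$, and $\mathrm{sp}_{X\hookrightarrow \mathfrak{P},+}E$ is simply the arithmetic $\cur{D}^\dagger_{\mathfrak{P},\Q}$-module associated to the convergent isocrystal $E$ supported on the tube $]X[_{\mathfrak{P}}$, via $\mathbf{R}\underline{\Gamma}^\dagger_X$ applied to the usual realisation of $E$ as a coherent $\cur{D}^\dagger$-module on a formal neighbourhood. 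The pushforward $f_+$ along $\mathfrak{P}\rightarrow \spf{\cur{V}}$ is the arithmetic direct image, which differs from a naïve hyperdirect image by a shift: $f_+ = \mathbf{R}f_* \circ (\text{spectral sequence stuff})[\dim \mathfrak{P}]$, but restricted to the subcategory of complexes supported on $X$, the relevant comparison is governed by \cite[Th\'eor\`eme 5.2.5]{Car11} (or its antecedents in Berthelot's work).

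Next I would invoke the key comparison theorem: for a convergent (equivalently, for a partially overconvergent along the empty divisor) isocrystal $E$ on a smooth $k$-variety $X$ realised inside a smooth proper formal $\cur{V}$-scheme, Berthelot's comparison theorem (as extended by Caro, e.g. \cite[Th\'eor\`eme 4.4.2]{Car15a} or the results cited as \cite[Th\'eor\`eme 5.2.5]{Car11}) identifies $\mathbb{R}\Gamma$ of $f_+ \mathrm{sp}_{X\hookrightarrow\mathfrak{P},+}E$ (over $\spf{\cur{V}}$, so really a complex of $K$-vector spaces) with the rigid cohomology $R\Gamma_\mathrm{rig}(X/K,E)$ up to the shift by $d = \dim X$. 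Concretely, the arithmetic $\cur{D}^\dagger$-module pushforward is built so that, after taking cohomology sheaves and base-changing to the generic point, it computes de Rham cohomology of the realisation of $E$ on a strict neighbourhood of the tube $]X[$, which is exactly Berthelot's definition of rigid cohomology; the degree shift by $d$ accounts for the difference between the "geometric" normalisation of $f_+$ (which centers Poincaré duality) and the cohomological normalisation of $H^\bullet_\mathrm{conv}$. I would then simply match up the indices: $H^{i-d}(f_+\mathrm{sp}_{X\hookrightarrow\mathfrak{P},+}E) \cong H^i_\mathrm{conv}(X/K,E)$ follows by taking $\cur{H}^{i-d}$ of both sides of the complex-level comparison.

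The main obstacle — and the reason the excerpt flags this section as "extremely technical" — is bookkeeping: one must be careful that the normalisations and shifts in \cite{Car11} and \cite{Car15a} are consistent with each other and with Berthelot's original conventions, that "convergent isocrystal" here really corresponds to the overcoherent isocrystal with no overconvergence condition along a divisor (so that $\mathrm{sp}_{X\hookrightarrow\mathfrak{P},+}$ is the correct functor and agrees with Berthelot's $\mathrm{sp}_*$ up to the relevant $\mathbf{R}\underline{\Gamma}^\dagger_X$), and that $f_+$ for a non-proper morphism like $\mathfrak{P}\rightarrow\spf{\cur{V}}$ when restricted to complexes supported on $X$ (with $X$ not necessarily proper) is still the right object — here properness of $\mathfrak{P}$ is not assumed, so one should either reduce to the proper case by choosing $\mathfrak{P}$ proper (which is harmless since the statement is local and one can always find such an embedding) or use that $f_+$ applied to a complex with support proper over $\spf{\cur{V}}$ is well-behaved. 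I would handle this by first reducing to $\mathfrak{P}$ proper over $\cur{V}$ (possible after shrinking, as in the surrounding argument) and then citing the comparison theorem for convergent coefficients in the proper case directly, at which point the identification $H^{i-d}(f_+ \mathrm{sp}_{X\hookrightarrow\mathfrak{P},+}E)\cong H^i_\mathrm{conv}(X/K,E)$ is exactly \cite[Th\'eor\`eme 4.4.2]{Car15a} combined with the fact that rigid and convergent cohomology agree for overconvergent (a fortiori convergent) coefficients on $X$ when $X$ admits a good proper frame. The actual mathematical content is entirely contained in the cited results; the work is purely in verifying that the functors and shifts line up.
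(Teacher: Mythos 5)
Your general instinct — that the statement should reduce to a known $\cur{D}^\dagger$-module/rigid cohomology comparison after a suitable reduction — is right, but the specific reduction you propose goes in the wrong direction, and you leave the genuinely substantive step of the proof unaddressed. You suggest shrinking to the case where $\mathfrak{P}$ is proper over $\cur{V}$; the paper instead does the opposite. It first uses Zariski descent (both $H^*_\mathrm{conv}$ and $f_+$ on overcoherent complexes satisfy Zariski descent) to reduce to $\mathfrak{P}$ and $X$ \emph{affine}, and then localises further so that the closed immersion $X\hookrightarrow \mathfrak{P}$ lifts to a closed immersion $i:\mathfrak{X}\hookrightarrow \mathfrak{P}$ of smooth formal $\cur{V}$-schemes. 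The reason for this is that the actual difficulty is not whether $\mathfrak{P}$ is proper, but that $X$ is a proper closed subvariety of the special fibre $\mathfrak{P}_k$, so $\mathrm{sp}_{X\hookrightarrow\mathfrak{P},+}E$ is not simply the usual $\mathrm{sp}_*$-realisation of $E$ as a $\cur{D}^\dagger$-module on $\mathfrak{P}$ to which Berthelot's original comparison applies. Passing to $\mathfrak{P}$ proper does nothing to change that.

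Once a lift $i:\mathfrak{X}\hookrightarrow\mathfrak{P}$ is available, the paper invokes Caro's construction (\cite[\S2]{Car09a}) to identify $\mathrm{sp}_{X\hookrightarrow\mathfrak{P},+}E\cong i_+\mathrm{sp}_{X\hookrightarrow\mathfrak{X},+}E$, and then transitivity of $f_+$ reduces everything to the case $X=\mathfrak{P}\times_\cur{V} k$, where the statement is exactly Berthelot's comparison \cite[(4.3.6.3)]{Ber02}. This localise-lift-and-use-transitivity step is the mathematical content of the lemma; it is what your proposal is missing. In its place you cite \cite[Th\'eor\`eme 4.4.2]{Car15a}, but in this paper that result is used for the fibrewise base-change identification in the preceding paragraph, not as a replacement for the reduction to the ``good'' situation. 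Without the localisation argument, you have no way to pass from the pushforward of $\mathrm{sp}_{X\hookrightarrow\mathfrak{P},+}E$ (an overcoherent complex supported on $X$ inside a larger $\mathfrak{P}$) to a de Rham complex computing $H^*_\mathrm{conv}(X/K,E)$, and the ``bookkeeping'' you allude to cannot be completed.
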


\begin{proof} Since both sides satisfy Zariski descent, we may reduce to the corresponding question for both $\mathfrak{P}$ and $X$ affine. Further localising, we may assume that $X\hookrightarrow \mathfrak{P}$ lifts to a closed embedding of smooth formal $\cur{V}$-schemes $i:\mathfrak{X}\hookrightarrow \mathfrak{P}$. In this case we have by construction (see \cite[\S2]{Car09a}) that $\mathrm{sp}_{X\hookrightarrow \mathfrak{P},+}E\cong i_+\mathrm{sp}_{X\hookrightarrow \mathfrak{X},+}E$. Hence by the transitivity of push-forward we can reduce to the case $X=\mathfrak{P}\times_W k$, which follows for example from \cite[(4.3.6.3)]{Ber02}. 
\end{proof}

\section{Exactness for liftable morphisms}

In this and the following sections, we will slowly build up to the proof of Theorem \ref{theo: main2} in stages, starting from very particular situations and then reducing the general case to these. The first situation in which we will prove Theorem \ref{theo: main2} is under some very strong liftability assumptions on the morphism $f$.

So suppose that we have some smooth affine variety $S=\spec{A_0}$ over $k$. Then by \cite[Th\'{e}or\`{e}me 6]{Elk73} we know that $S$ lifts to a smooth and affine $\cur{V}$-scheme $\cur{S}=\spec{A}$. Choosing a presentation of $A$ gives us embeddings
\[ \cur{S} \hookrightarrow \A^N_\cur{V} \hookrightarrow \P^N_\cur{V} \]
and we let $\mathfrak{S}$ denote the completion of the closure of $\cur{S}$ inside $\P^N_\cur{V}$. Let $\overline{S}$ denote the closure of $S$ inside $\mathfrak{S}\times_\cur{V} k$, so we have a smooth and proper frame $(S,\overline{S},\mathfrak{S})$ over $\cur{V}$, in the sense of \cite[Definitions 3.3.5, 3.3.10]{LS07}.

\begin{definition} We will call any frame of the form $(S,\overline{S},\mathfrak{S})$, as just constructed, a `Monsky--Washnitzer' frame.
\end{definition}

The main result of this section is then the following.

\begin{theorem}\label{theo: heslift} Let $f:X\rightarrow S$, $x\in X(k)$ and $s=f(x)$ be as in Theorem \ref{theo: main2}. Assume that $S$ is affine, and that there exists a morphism of frames 
\[\xymatrix{ X\ar[r] \ar[d]_f & \overline{X} \ar[r] \ar[d] & \mathfrak{X} \ar[d]^g \\ S\ar[r] & \overline{S} \ar[r] & \mathfrak{S} }  \]
extending $f$ such that:
\begin{enumerate}
\item $(S,\overline{S},\mathfrak{S})$ is a Monsky--Washnitzer frame;
\item both squares in the above diagram are Cartesian;
\item $\mathfrak{X}\rightarrow \mathfrak{S}$ is projective, and smooth in a neighbourhood of $X$.
\end{enumerate}
Then the homotopy sequence for $(f,x)$ is exact.
\end{theorem}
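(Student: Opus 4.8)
The plan is to reduce the exactness of the overconvergent homotopy sequence to the rigid analytic statement Theorem \ref{theo: main1}, exploiting the liftability hypotheses to realise the category of overconvergent isocrystals via de\thinspace Rham cohomology on strict neighbourhoods. First I would use Corollary \ref{cor: weakhes} to know that the sequence is already weakly exact, so by Corollary \ref{cor: rednormal} it suffices to show that the image of $\pi_1^\dagger(X_s,x)\rightarrow \pi_1^\dagger(X,x)$ is a normal subgroup; equivalently, by the argument of Proposition \ref{prop: mix}, that for every $E\in \mathrm{Isoc}^\dagger(X/K)$ the fixed subscheme $\P(E_x)^{\pi_1^\dagger(X_s,x)}\subset \P(E_x)$ is invariant under the full group $\pi_1^\dagger(X,x)$.

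The key geometric input is that, under the hypotheses, the generic fibre $\mathfrak{S}_K$ carries a fundamental system of strict neighbourhoods $V_\lambda$ of the tube $]S[$ with $]S[\,\subset V_\lambda$, and likewise $\mathfrak{X}_K$ carries strict neighbourhoods $W_\lambda$ of $]X[$; since $\mathfrak{X}\rightarrow\mathfrak{S}$ is projective and smooth near $X$, we may arrange (shrinking $W_\lambda$ and $V_\lambda$ compatibly) that each induced map $W_\lambda\rightarrow V_\lambda$ is smooth and projective with geometrically connected fibres, and that $V_\lambda$, $W_\lambda$ are smooth and geometrically connected. The point $x$ lifts to a $K$-point of every $W_\lambda$ mapping to a $K$-point of $V_\lambda$. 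Then the standard description of overconvergent isocrystals via Monsky--Washnitzer-type realisations gives, for each of $S$ and $X$, a presentation of the Tannakian category $\mathrm{Isoc}^\dagger$ as a $2$-colimit over $\lambda$ of the categories $\mathrm{MIC}(V_\lambda/K)$, resp. $\mathrm{MIC}(W_\lambda/K)$ (compatibly with pullback along $f$), and dually a description
\[ \pi_1^\dagger(S,s)\cong\varprojlim_\lambda \pi_1^\dR(V_\lambda,s),\qquad \pi_1^\dagger(X,x)\cong\varprojlim_\lambda \pi_1^\dR(W_\lambda,x), \]
with a similar statement for the fibre $X_s$, whose rigid realisation is a fibre $W_{\lambda,v}$ of $W_\lambda\rightarrow V_\lambda$. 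Applying Theorem \ref{theo: main1} to each smooth projective $W_\lambda\rightarrow V_\lambda$ tells us that the de\thinspace Rham homotopy sequence is exact at each finite level, and in particular that $\P(E_x)^{\pi_1^\dR(W_{\lambda,v},x)}$ is $\pi_1^\dR(W_\lambda,x)$-invariant for every $E\in\mathrm{MIC}(W_\lambda/K)$.

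The remaining step is a diagram chase passing to the limit. Given $E\in\mathrm{Isoc}^\dagger(X/K)$, choose $\lambda$ large enough that $E$ is the realisation of some $E_\lambda\in\mathrm{MIC}(W_\lambda/K)$; the fixed scheme $\P(E_x)^{\pi_1^\dagger(X_s,x)}$ is cut out inside $\P(E_x)$ by the $\pi_1^\dagger(X_s,x)$-action, but since that group surjects onto $\pi_1^\dR(W_{\lambda,v},x)$ (compatibility of the colimit presentations with restriction to the fibre), this fixed scheme equals $\P((E_\lambda)_x)^{\pi_1^\dR(W_{\lambda,v},x)}$; by Theorem \ref{theo: main1} the latter is invariant under $\pi_1^\dR(W_\lambda,x)$, and since $\pi_1^\dagger(X,x)\rightarrow\pi_1^\dR(W_\lambda,x)$ need not be surjective we instead observe that the $\pi_1^\dagger(X,x)$-action on $\P(E_x)$ factors through $\pi_1^\dR(W_\lambda,x)$ (being the realisation of $E_\lambda$), so invariance under the larger group implies invariance under $\pi_1^\dagger(X,x)$. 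Thus $\P(E_x)^{\pi_1^\dagger(X_s,x)}$ is $\pi_1^\dagger(X,x)$-invariant, and Proposition \ref{prop: mix} together with Corollary \ref{cor: weakhes} finishes the proof. The main obstacle I expect is the bookkeeping around the presentation of $\mathrm{Isoc}^\dagger$ as a colimit of $\mathrm{MIC}(V_\lambda/K)$ and its compatibility with the fibre $X_s$ and with the structure map $f$: one must choose the strict neighbourhoods $W_\lambda$, $V_\lambda$ simultaneously so that $W_\lambda\to V_\lambda$ remains smooth projective and so that $W_{\lambda,v}$ is genuinely a strict neighbourhood of $]X_s[$ computing $\mathrm{Isoc}^\dagger(X_s/K)$, and then verify that the surjections $\varprojlim_\lambda\pi_1^\dR(W_\lambda,x)\twoheadrightarrow\pi_1^\dagger(X,x)$ are compatible with those for $S$ and $X_s$; everything else is then formal Tannakian manipulation.
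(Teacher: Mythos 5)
Your overall strategy is exactly the paper's: present (a colimit version of) the relevant Tannakian categories as $2$-colimits of the categories $\mathrm{MIC}(W_\lambda/K)$ and $\mathrm{MIC}(V_\lambda/K)$, apply Theorem~\ref{theo: main1} to each smooth projective $W_\lambda\to V_\lambda$, and then transfer the resulting normality statement back to $\pi_1^\dagger$ via Corollary~\ref{cor: rednormal}. However, two of the arrows in your argument point the wrong way, and this needs repair. First, $\pi_1^\dagger(S,s)$ is \emph{not} isomorphic to $\varprojlim_\lambda\pi_1^\dR(V_\lambda,s)$; the natural functor $\mathrm{Isoc}^\dagger(S/K)\to \mathrm{MIC}(S,\mathfrak{S}/K)=2\text{-}\mathrm{colim}_\lambda\mathrm{MIC}(V_\lambda/K)$ is fully faithful with image stable by sub-quotients (by \cite[Proposition 2.2.7]{Ber96b}), which yields a \emph{surjection} of the colimit group onto $\pi_1^\dagger$ but not an isomorphism. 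Second, and more seriously, $\pi_1^\dagger(X_s,x)$ does \emph{not} surject onto $\pi_1^\dR(W_{\lambda,v},x)$; for exactly the same Tannakian reason, the surjection goes the other way, $\pi_1^\dR(\mathfrak{X}_{K,\tilde s},\tilde x)\twoheadrightarrow \pi_1^\dagger(X_s,x)$. Your subsequent equality $\P(E_x)^{\pi_1^\dagger(X_s,x)}=\P((E_\lambda)_x)^{\pi_1^\dR(W_{\lambda,v},x)}$ is nevertheless true, but it holds because both groups act on $E_x$ through the same image (fully faithful with sub-quotient-stable image implies the two monodromy groups in $\mathrm{GL}(E_x)$ coincide), not because of a surjection in the direction you claim.

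Relatedly, there is no natural morphism between $\pi_1^\dagger(X,x)$ and $\pi_1^\dR(W_\lambda,x)$ in either direction, so the assertion that the $\pi_1^\dagger(X,x)$-action `factors through' $\pi_1^\dR(W_\lambda,x)$ is not literally meaningful; again one should argue via images in $\mathrm{GL}(E_x)$, or better, via the intermediate group the paper introduces: the Tannaka dual $\pi_1^\mathrm{colim}(]X[_\mathfrak{X},\tilde x)$ of the colimit category $\mathrm{MIC}(X,\mathfrak{X}/K)$. The paper's proof first establishes exactness of the sequence
\[ \pi_1^\dR(\mathfrak{X}_{K,\tilde s},\tilde x)\to\pi_1^\mathrm{colim}(]X[_\mathfrak{X},\tilde x)\to\pi_1^\mathrm{colim}(]S[_\mathfrak{S},\tilde s)\to 1 \]
(using Theorem~\ref{theo: exactds} together with Theorem~\ref{theo: main1} at each finite level, exactly the reduction you envisage), and then pushes exactness down to $\pi_1^\dagger$ via the three vertical surjections coming from \cite[Proposition 2.2.7]{Ber96b} and Corollary~\ref{cor: rednormal}. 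Introducing this intermediary makes all the comparison maps genuine group homomorphisms and avoids the factoring issues; with that modification your outline is essentially the paper's proof.
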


\begin{remark} Note that by GFGA together with the Monsky--Washnitzer assumption on $(S,\overline{S},\mathfrak{S})$, the morphism $g:\mathfrak{X}\rightarrow\mathfrak{S}$ in the statement of the theorem arises as the formal completion of a morphism of projective $\cur{V}$-schemes.
\end{remark}

In the situation of Theorem \ref{theo: heslift} we may choose a cofinal system of neighbourhoods $V_\lambda$ of $]S[_\mathfrak{S}$ inside $\mathfrak{S}_K$, for $\lambda \rightarrow 1^-$, such that each $V_\lambda$ is  smooth and  geometrically connected over $K$. Then $W_\lambda:=g^{-1}(V_\lambda)$ form a cofinal system of neighbourhoods of $]X[_\mathfrak{X}$ inside $\mathfrak{X}_K$. Let
\[ \mathrm{MIC}(S,\mathfrak{S}/K)=2\text{-}\mathrm{colim}_\lambda\mathrm{MIC}(V_\lambda/K) \]
 denote the category of coherent $j_S^\dagger\cur{O}_{\mathfrak{S}_K}$-modules with integrable connection. Similarly let
 \[ \mathrm{MIC}(X,\mathfrak{X}/K)=2\text{-}\mathrm{colim}_\lambda\mathrm{MIC}(W_\lambda/K) \]
 denote the category of coherent $j_X^\dagger\cur{O}_{\mathfrak{X}_K}$-modules with integrable connection. (For the equivalence between these two interpretations see \cite[Proposition 6.1.15]{LS07}.) 

\begin{proposition} \label{prop: tann2}Choose a lift $\tilde x\in ]X[_\mathfrak{X}(K)$ of $x\in X(k)$, and let $\tilde s=g(\tilde x)$. Then the categories $\mathrm{MIC}(X,\mathfrak{X}/K)$ and $\mathrm{MIC}(S,\mathfrak{S}/K)$ are neutral Tannakian over $K$, with fibre functors provided by $\tilde{x}^*$ and $\tilde{s}^*$ respectively.
\end{proposition}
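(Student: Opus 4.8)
The plan is to deduce both statements from the corresponding result for an honest smooth analytic variety, Proposition \ref{prop: tann1}, by passing to the filtered $2$-colimit. Recall that $\mathrm{MIC}(S,\mathfrak{S}/K) = 2\text{-}\mathrm{colim}_\lambda \mathrm{MIC}(V_\lambda/K)$ and $\mathrm{MIC}(X,\mathfrak{X}/K) = 2\text{-}\mathrm{colim}_\lambda \mathrm{MIC}(W_\lambda/K)$. First I would observe that each $V_\lambda$ is, by construction, a smooth geometrically connected analytic $K$-variety, and that the lift $\tilde s = g(\tilde x)$ lies in the tube $]S[_\mathfrak{S}$, hence in $V_\lambda(K)$ for every $\lambda$ in the cofinal system. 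After possibly shrinking the $V_\lambda$ the same is true of each $W_\lambda = g^{-1}(V_\lambda)$ and the point $\tilde x\in{}]X[_\mathfrak{X}$: here one uses that $g$ is projective and, near $X$, smooth over the geometrically connected base $V_\lambda$ with geometrically connected fibres (inherited from those of $f$), so that $W_\lambda$ is smooth and geometrically connected over $K$. Proposition \ref{prop: tann1} then already gives that each $\mathrm{MIC}(V_\lambda/K)$ is neutral Tannakian over $K$ with fibre functor $\tilde s^*$, and likewise each $\mathrm{MIC}(W_\lambda/K)$ with fibre functor $\tilde x^*$.

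Next I would check that the transition functors in these colimits are well behaved. For $V_{\lambda'}\subseteq V_\lambda$ the transition functor $\mathrm{MIC}(V_\lambda/K)\to\mathrm{MIC}(V_{\lambda'}/K)$ is restriction along an open immersion: it is therefore exact, $K$-linear and compatible with tensor products and duals, it commutes with the fibre functors $\tilde s^*$ (since $\tilde s$ factors through every $V_{\lambda'}$), and it is the identity on $\mathrm{End}(\cur{O}) = H^0_\dR(V_\lambda/K) = K$. The same holds for the $W_\lambda$.

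The main step is then the general, if slightly bureaucratic, assertion that a filtered $2$-colimit of neutral Tannakian categories over $K$ along exact $K$-linear tensor functors compatible with a chosen system of fibre functors is itself neutral Tannakian, with fibre functor the induced one. Concretely I would verify that the $2$-colimit is abelian (with $\mathrm{Hom}$-groups, kernels and cokernels all computed at finite level, where the transition functors are exact), that it inherits a rigid $K$-linear tensor structure (duals and internal Homs being preserved by the transition functors), that $\mathrm{End}(\cur{O}) = \mathrm{colim}_\lambda K = K$, and that the induced functor $\tilde s^*$ (resp. $\tilde x^*$) is exact and faithful — again both properties can be tested at finite level, where they hold because a fibre functor of a neutral Tannakian category is exact and faithful. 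The recognition criterion of \cite[Ch.\ II, Theorem 2.11]{DMOS82} then applies and finishes the proof.

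The hard part here is really just the second-to-last point: the careful verification that the $2$-colimit genuinely is an abelian, and indeed rigid tensor, category, which amounts to checking that nothing goes wrong when one represents objects and morphisms by finite-level data and manipulates them. There is no serious mathematical difficulty, only a need for care with the $2$-categorical bookkeeping; everything substantive reduces immediately to Proposition \ref{prop: tann1}. (One could alternatively argue directly with coherent $j^\dagger\cur{O}$-modules with connection, but the colimit description makes the reuse of Proposition \ref{prop: tann1} cleanest.)
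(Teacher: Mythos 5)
Your proof is correct, but it takes a different route from the paper's, which is extremely terse: the paper deduces local freeness of objects in the colimit categories from (the proof of) Proposition \ref{prop: tann1}, and then asserts that ``the rest of the proof is word for word the same as the proof of Proposition \ref{prop: tann1}'' --- in other words, one re-verifies the hypotheses of \cite[Ch.\ II, Proposition 1.20]{DMOS82} directly on the colimit category: $\tilde x^*$ is a faithful, exact, $K$-linear tensor functor, objects of rank one are line bundles, and $\mathrm{End}(\cur{O}^\dagger_{]X[})=K$ via the retraction supplied by $\tilde x^*$ together with the absence of nontrivial idempotents in $\Gamma(j_X^\dagger\cur{O}_{\mathfrak{X}_K})$, the ind-object $]X[$ being connected. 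You instead isolate the general categorical fact that a filtered $2$-colimit of neutral Tannakian categories along exact $K$-linear tensor functors compatible with a system of fibre functors is neutral Tannakian, and apply it to the towers $\{V_\lambda\}$ and $\{W_\lambda\}$. This is a clean and valid reduction, but it commits you to showing that \emph{each} $W_\lambda$ is smooth, geometrically connected and contains $\tilde x$, so that Proposition \ref{prop: tann1} genuinely applies at every finite level --- whereas the paper's direct re-run only needs connectedness of the limit tube $]X[$, and indeed the paper's setup only explicitly arranges the $V_\lambda$ to be geometrically connected. Your sketch for arranging the $W_\lambda$ to be geometrically connected (smoothness of $g$ near $X$, geometric connectedness of the fibres of $f$) is the right idea, though it is a genuine additional step and not quite a shrinking argument; it is worth noting that once one is willing to verify this, your route is arguably the more transparent one, since the colimit fact you invoke is a reusable lemma. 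Both approaches ultimately lean entirely on Proposition \ref{prop: tann1}.
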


\begin{proof} It follows from Proposition \ref{prop: tann1} that objects in $\mathrm{MIC}(X,\mathfrak{X}/K)$ (resp. $\mathrm{MIC}(S,\mathfrak{S}/K)$) are locally free, and the rest of the proof is word for word the same as the proof of Proposition \ref{prop: tann1}.
\end{proof}

We will let $\pi_1^\mathrm{colim}(]X[_\mathfrak{X},\tilde{x})$ and $\pi_1^\mathrm{colim}(]S[_\mathfrak{S},\tilde{s})$ denote the corresponding Tannaka duals.

\begin{proposition} The sequence of affine group schemes 
\[ \pi_1^\mathrm{dR}(\mathfrak{X}_{K,\tilde{s}},\tilde{x})\rightarrow \pi_1^\mathrm{colim}(]X[_\mathfrak{X},\tilde{x})\rightarrow \pi_1^\mathrm{colim}(]S[_\mathfrak{S},\tilde{s}) \rightarrow 1.  \]
is exact.
\end{proposition}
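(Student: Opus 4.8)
The plan is to apply Theorem~\ref{theo: main1} to each of the smooth projective morphisms $g\colon W_\lambda\to V_\lambda$ and then pass to the cofiltered limit in $\lambda$. The first task is to check that, after restricting to a cofinal subsystem of the $\lambda$'s, the morphism $g\colon W_\lambda\to V_\lambda$ really does satisfy the hypotheses of Theorem~\ref{theo: main1}. Projectivity of $g\colon W_\lambda\to V_\lambda$ is immediate from projectivity of $g\colon\mathfrak{X}\to\mathfrak{S}$, and $V_\lambda$ is smooth and geometrically connected by the choice of the $V_\lambda$. For smoothness, recall that $g$ is smooth on some open neighbourhood $\cur{U}$ of $X$ in $\mathfrak{X}$; by the Cartesian hypothesis of Theorem~\ref{theo: heslift} we have $X=g^{-1}(S)$ on special fibres, so $]X[_\mathfrak{X}=g_K^{-1}(]S[_\mathfrak{S})\subseteq\cur{U}_K$, and since $g_K$ is proper it follows that $W_\lambda=g^{-1}(V_\lambda)\subseteq\cur{U}_K$ for $\lambda$ sufficiently close to $1$; for such $\lambda$ the morphism $g\colon W_\lambda\to V_\lambda$ is smooth, and $W_\lambda$ is itself smooth because $V_\lambda$ is. The fibre $W_{\lambda,\tilde s}=g_K^{-1}(\tilde s)$ does not depend on $\lambda$; this is what is denoted $\mathfrak{X}_{K,\tilde s}$. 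It is smooth over $K$ since $g_K^{-1}(\tilde s)\subseteq]X[_\mathfrak{X}\subseteq\cur{U}_K$, it is projective over $K$ since $g$ is, and it is geometrically connected because it is the rigid generic fibre of the proper formal $\cur{V}$-scheme $\mathfrak{X}\times_{\mathfrak{S},\tilde s}\spf{\cur{V}}$ (base change of $g$ along the canonical $\cur{V}$-point associated to $\tilde s\in\,]S[_\mathfrak{S}(K)$), which is $\cur{V}$-flat along its special fibre $X_s$ (as $g$ is smooth along $X$) and has $X_s$ geometrically connected by the hypothesis of Theorem~\ref{theo: main2}. Finally, the Stein factorisation $W_\lambda\to V'_\lambda\to V_\lambda$ of the smooth proper morphism $g$ has $V'_\lambda\to V_\lambda$ finite \'etale; since $V_\lambda$ is connected this morphism has constant degree, and that degree is $1$ because the fibre $\mathfrak{X}_{K,\tilde s}$ over the point $\tilde s$ of $]S[_\mathfrak{S}$ is geometrically connected. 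Hence every fibre of $g\colon W_\lambda\to V_\lambda$ over a rigid point is geometrically connected, and all of the hypotheses of Theorem~\ref{theo: main1} hold.

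With this in hand, Theorem~\ref{theo: main1} provides, for each $\lambda$ in the cofinal subsystem, an exact sequence
\[ \pi_1^\dR(\mathfrak{X}_{K,\tilde s},\tilde x)\to\pi_1^\dR(W_\lambda,\tilde x)\to\pi_1^\dR(V_\lambda,\tilde s)\to 1, \]
compatible with the transition maps as $\lambda\to 1^-$. Because $\mathrm{MIC}(X,\mathfrak{X}/K)$ and $\mathrm{MIC}(S,\mathfrak{S}/K)$ are by definition the filtered $2$-colimits of the categories $\mathrm{MIC}(W_\lambda/K)$ and $\mathrm{MIC}(V_\lambda/K)$ (Proposition~\ref{prop: tann2}), and since the Tannaka dual of a filtered $2$-colimit of neutral Tannakian categories with compatible fibre functors is the cofiltered limit of the Tannaka duals, we may identify $\pi_1^\mathrm{colim}(]X[_\mathfrak{X},\tilde x)=\varprojlim_\lambda\pi_1^\dR(W_\lambda,\tilde x)$ and $\pi_1^\mathrm{colim}(]S[_\mathfrak{S},\tilde s)=\varprojlim_\lambda\pi_1^\dR(V_\lambda,\tilde s)$. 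It therefore remains to show that a cofiltered limit of exact sequences $L\to G_\lambda\to A_\lambda\to 1$ of affine group schemes over $K$, with $L$ independent of $\lambda$, is again exact, where $G=\varprojlim_\lambda G_\lambda$ and $A=\varprojlim_\lambda A_\lambda$. This is a routine verification on coordinate rings: one has $\cur{O}(G)=\varinjlim_\lambda\cur{O}(G_\lambda)$ and $\cur{O}(A)=\varinjlim_\lambda\cur{O}(A_\lambda)$, so faithful flatness of $G\to A$, triviality of the composite $L\to G\to A$, the identity $\ker(G\to A)=\varprojlim_\lambda\ker(G_\lambda\to A_\lambda)$, and the fact that the scheme-theoretic image of $L\to G$ equals $\varprojlim_\lambda$ of the scheme-theoretic images of $L\to G_\lambda$, all follow from the corresponding levelwise statements together with exactness of filtered colimits of modules (and the fact that $\varprojlim$ of affine group schemes represents the limit of their functors of points). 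Since $\ker(G_\lambda\to A_\lambda)$ coincides with the scheme-theoretic image of $L\to G_\lambda$ for each $\lambda$ by exactness at level $\lambda$, the same identity holds in the limit, which is exactly the exactness of $L\to G\to A\to 1$.

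The main obstacle is the geometric bookkeeping of the first step: ensuring that the strict neighbourhoods $W_\lambda\to V_\lambda$ constitute a cofinal system of smooth projective morphisms with smooth base and geometrically connected fibres, so that Theorem~\ref{theo: main1} genuinely applies to them. Once that is arranged, both the levelwise application of Theorem~\ref{theo: main1} and the passage to the cofiltered limit are formal.
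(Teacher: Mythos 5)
Your proof is correct, but it takes a genuinely different route from the paper's. The paper first establishes weak exactness of the sequence directly, by constructing a push-forward functor $g_{\dR*}:\mathrm{MIC}(X,\mathfrak{X}/K)\rightarrow\mathrm{MIC}(S,\mathfrak{S}/K)$ as a colimit of the $\lambda$-level push-forwards and arguing as in Lemma~\ref{lemma: dradj}. It then upgrades to full exactness via the dos Santos criterion (Theorem~\ref{theo: exactds}): for a given $E\in\mathrm{MIC}(X,\mathfrak{X}/K)$ it picks a single $\lambda$ from which $E$ is pulled back, applies Theorem~\ref{theo: main1} to $W_\lambda\to V_\lambda$, and uses the natural homomorphism $K_{\mathrm{colim}}\to K_\lambda$ between kernels to compare fixed-point loci in $\P(E_{\tilde x})$. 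You instead identify $\pi_1^\mathrm{colim}(]X[_\mathfrak{X},\tilde x)=\varprojlim_\lambda\pi_1^\dR(W_\lambda,\tilde x)$ (and similarly on the base), invoke Theorem~\ref{theo: main1} levelwise, and then prove a general lemma that a cofiltered limit of exact sequences of affine group schemes with constant first term is again exact. Your route is arguably more conceptual and makes all the geometric bookkeeping about the system $\{W_\lambda\to V_\lambda\}$ explicit, which the paper largely leaves implicit; the paper's route avoids the general limit lemma and reuses machinery it has already set up.

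Two points where your argument is a bit glib and deserve more care. First, you say faithful flatness of $G\to A$ follows from the levelwise statements ``together with exactness of filtered colimits of modules.'' Flatness of a filtered colimit of flat ring maps is not formal when the transition maps change; the clean argument is that each $\cur{O}(A_\lambda)\to\cur{O}(G_\lambda)$ is injective, hence so is $\cur{O}(A)\to\cur{O}(G)$ by exactness of filtered colimits, and then one invokes Takeuchi's theorem that an injective map of Hopf algebras over a field is automatically faithfully flat (or equivalently, one argues via the Tannakian characterisation of faithful flatness in terms of full faithfulness and closure under subobjects, which is stable under filtered $2$-colimits). Second, the identification of the Tannaka dual of a filtered $2$-colimit of neutral Tannakian categories (with compatible fibre functors) with the cofiltered limit of the Tannaka duals is correct but is itself a small lemma that should be stated; the subtlety is that the canonical functors $\mathrm{MIC}(W_\lambda/K)\to\mathrm{MIC}(X,\mathfrak{X}/K)$ need not be faithful, so one has to chase the definition of a tensor automorphism of the colimit fibre functor rather than simply restricting. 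Neither point is a real obstruction, but as written they are appeals to facts that should be cited or proved.
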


\begin{proof} By combining the push-forward functors $g_{\mathrm{dR}*}:\mathrm{MIC}(W_\lambda/K)\rightarrow \mathrm{MIC}(V_\lambda/K)$ considered in the proof of Lemma \ref{lemma: dradj} it is entirely straightforward to construct a push-forward functor
\[g_{\mathrm{dR}*}:\mathrm{MIC}(X,\mathfrak{X}/K)\rightarrow\mathrm{MIC}(S,\mathfrak{S}/K) \]
which is adjoint to $g^*$, and which on fibres recovers $H^0_\mathrm{dR}$. Now arguing exactly as in the proof of Lemma \ref{lemma: dradj} we can see that the claimed sequence is weakly exact. By Theorem \ref{theo: exactds} it therefore suffices to show that for any $E\in \mathrm{MIC}(X,\mathfrak{X}/K)$, with associated monodromy representation
\[\pi_1^\mathrm{colim}(]X[_\mathfrak{X},\tilde{x})\rightarrow \mathrm{GL}(E_{\tilde{x}}), \]
the inclusion
\[  \P(E_{\tilde{x}})^{K_\mathrm{colim}}(K)\subset \P(E_{\tilde{x}})^{\pi_1^\mathrm{dR}(\mathfrak{X}_{K,\tilde{s}},\tilde{x})}(K) \]
is in fact an equality. Note that any such object is pulled back from some $W_\lambda$ via the map $]X[_\mathfrak{X}\rightarrow W_\lambda$. Let $K_\mathrm{colim}$ denote the kernel of $\pi_1^\mathrm{colim}(]X[_\mathfrak{X},\tilde{x})\rightarrow \pi_1^\mathrm{colim}(]S[_\mathfrak{S},\tilde{s})$ and $K_\lambda$ the kernel of $ \pi_1^\mathrm{dR}(W_\lambda,\tilde{x}) \rightarrow \pi_1^\mathrm{dR}(V_\lambda,\tilde{s})$. We therefore have a natural map $K_\mathrm{colim}\rightarrow K_\lambda$. Applying Theorem \ref{theo: main1} the sequence
\[ \pi_1^\mathrm{dR}(\mathfrak{X}_{K,\tilde{s}},\tilde{x})\rightarrow \pi_1^\mathrm{dR}(W_\lambda,\tilde{x}) \rightarrow \pi_1^\mathrm{dR}(V_\lambda,\tilde{s}) \rightarrow 1  \]
is exact, and hence again applying Theorem \ref{theo: exactds} we can deduce that \[ \P(E_{\tilde{x}})^{\pi_1^\mathrm{dR}(\mathfrak{X}_{K,\tilde{s}},\tilde{x})}(K) = \P(E_{\tilde{x}})^{K_\lambda}(K).\]
But since we have $K_\mathrm{colim}\rightarrow K_\lambda$ it follows that
\[ \P(E_{\tilde{x}})^{\pi_1^\mathrm{dR}(\mathfrak{X}_{K,\tilde{s}},\tilde{x})}(K) = \P(E_{\tilde{x}})^{K_\lambda}(K)  \subset \P(E_{\tilde{x}})^{K_\mathrm{colim}}(K)\]
and the proof is complete. 
\end{proof}

\begin{proof}[Proof of Theorem \ref{theo: heslift}]
It follows from \cite[Proposition 2.2.7]{Ber96b} that the functors
\begin{align*}
\mathrm{Isoc}^\dagger(X/K) & \rightarrow \mathrm{MIC}(X,\mathfrak{X}/K) \\
\mathrm{Isoc}^\dagger(S/K) & \rightarrow \mathrm{MIC}(S,\mathfrak{S}/K) \\
\mathrm{Isoc}^\dagger(X_s/K) & \rightarrow \mathrm{MIC}(\mathfrak{X}_{\tilde{s}}/K)
\end{align*}
are all fully faithful with image stable by sub-quotients. In particular in the commutative diagram 
\[\xymatrix{ \pi_1^\mathrm{dR}(\mathfrak{X}_{K,\tilde{s}},\tilde{x})\ar[r] \ar[d] &  \pi_1^\mathrm{colim}(]X[_\mathfrak{X},\tilde{x})\ar[r] \ar[d] &  \pi_1^\mathrm{colim}(]S[_\mathfrak{S},\tilde{s}) \ar[r] \ar[d] & 1  \\ \pi_1^\dagger(X_s,x) \ar[r] &  \pi_1^\dagger(X,x) \ar[r] &  \pi_1^\dagger(S,s) \ar[r] & 1  } \]
all the vertical maps are surjective. Since the top sequence is exact, it follows that the image of $\pi_1^\dagger(X_s,x) \rightarrow  \pi_1^\dagger(X,x)$ must be normal subgroup, and hence the bottom sequence is exact by Corollary \ref{cor: rednormal}.
\end{proof}

\section{The case of a smooth family of curves}

While the liftability condition in Theorem \ref{theo: heslift} is extremely strong, it will always hold for a family of curves over a smooth affine base. That this is true is the key result of this section; we will then use this to deduce Theorem \ref{theo: main2} in relative dimension 1. In order to do so, we will need to show that it suffices to treat the case when the base is affine, which is the content of the following lemma. Throughout, let $f:X\rightarrow S$, $x\in X(k)$ and $s=f(x)$ be as in the statement of Theorem \ref{theo: main2}

\begin{lemma} \label{lemma: open} Let $U\subset S$ be an open subset containing $s$ and $f_U:X_U\rightarrow U$ the base change. If the homotopy sequence for $(f_U,x)$ is exact, then so is the homotopy sequence for $(f,x)$.
\end{lemma}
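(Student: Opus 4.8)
The plan is to show that exactness of the homotopy sequence is insensitive to shrinking the base, by exploiting the fact that the overconvergent fundamental group of $X$ is controlled by that of $X_U$ together with that of $S$ (which surjects onto $\pi_1^\dagger(U,s)$). First I would record the commutative diagram of homotopy sequences
\[
\xymatrix{
\pi_1^\dagger(X_s,x) \ar[r] \ar@{=}[d] & \pi_1^\dagger(X_U,x) \ar[r] \ar[d] & \pi_1^\dagger(U,s) \ar[r] \ar[d] & 1 \\
\pi_1^\dagger(X_s,x) \ar[r] & \pi_1^\dagger(X,x) \ar[r] & \pi_1^\dagger(S,s) \ar[r] & 1
}
\]
where the identification of the fibres is because $X_s=(X_U)_s$, and the vertical maps are induced by the open immersions $X_U\hookrightarrow X$ and $U\hookrightarrow S$. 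By Corollary \ref{cor: weakhes} both rows are weakly exact, so in particular the bottom row is exact away from the point of failure at $\pi_1^\dagger(X,x)$; by Corollary \ref{cor: rednormal} it suffices to prove that the image of $\pi_1^\dagger(X_s,x)\to \pi_1^\dagger(X,x)$ is normal.

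The key input I would isolate is that the restriction functor $\mathrm{Isoc}^\dagger(S/K)\to \mathrm{Isoc}^\dagger(U/K)$ is fully faithful with image closed under subobjects: this is a standard consequence of the fact that overconvergent isocrystals on a smooth variety extend uniquely across a closed subset of codimension $\geq 1$ (Kedlaya's results, as already used in the proof of Theorem \ref{theo: weaklef}), hence $\pi_1^\dagger(U,s)\to\pi_1^\dagger(S,s)$ is faithfully flat. The same applies to $\mathrm{Isoc}^\dagger(X/K)\to\mathrm{Isoc}^\dagger(X_U/K)$, so the middle vertical map $\pi_1^\dagger(X_U,x)\to\pi_1^\dagger(X,x)$ is also faithfully flat. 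Now a diagram chase: given the hypothesis that the top row is exact, the image of $\pi_1^\dagger(X_s,x)$ in $\pi_1^\dagger(X_U,x)$ is the normal subgroup $\ker(\pi_1^\dagger(X_U,x)\to\pi_1^\dagger(U,s))$. Pushing forward along the faithfully flat map to $\pi_1^\dagger(X,x)$, its image is the closure of the image of this normal subgroup. One checks that this image is exactly $\ker(\pi_1^\dagger(X,x)\to\pi_1^\dagger(S,s))$: it is contained in it because the right square commutes, and it surjects onto it because $\pi_1^\dagger(X_U,x)\to\pi_1^\dagger(X,x)$ is surjective and $\pi_1^\dagger(U,s)\to\pi_1^\dagger(S,s)$ is surjective, so any element of $\ker(\pi_1^\dagger(X,x)\to\pi_1^\dagger(S,s))$ lifts to $\pi_1^\dagger(X_U,x)$ and can be adjusted by an element of the image of $\pi_1^\dagger(U,s)$ — wait, rather, I would phrase it Tannakianly: the image of $\pi_1^\dagger(X_s,x)$ in $\pi_1^\dagger(X,x)$ being normal is equivalent to saying that for every $E\in\mathrm{Isoc}^\dagger(X/K)$, the maximal subobject of $E|_{X_s}$ trivial under $\pi_1^\dagger(X_s,x)$ is stable under $\pi_1^\dagger(X,x)$; but $E|_{X_s}=(E|_{X_U})|_{X_s}$ and we know this stability holds for $\pi_1^\dagger(X_U,x)$, which surjects onto $\pi_1^\dagger(X,x)$, so the stability is inherited.

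I expect the main obstacle to be purely bookkeeping: verifying carefully that the restriction functors on overconvergent isocrystals along an open immersion of smooth varieties are fully faithful with image stable under subquotients (equivalently that $\pi_1^\dagger(U,s)\to\pi_1^\dagger(S,s)$ is faithfully flat and similarly for $X$), and then that the image of a normal closed subgroup under a faithfully flat homomorphism, with surjectivity of the relevant quotients, is again the kernel of the corresponding quotient. None of this is deep — it is the same kind of diagram chasing already deployed in Theorem \ref{theo: weaklef} and the proof of Theorem \ref{theo: heslift} — but I would take care that the excision statement for overconvergent isocrystals is applied correctly (it requires smoothness of $S$, which we have, and that $S\setminus U$ be closed, which is automatic). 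The cleanest write-up probably avoids mentioning faithful flatness of the vertical maps at all and instead argues entirely in terms of invariant subobjects on the fibre $X_s$, using only surjectivity of $\pi_1^\dagger(X_U,x)\to\pi_1^\dagger(X,x)$ together with Corollary \ref{cor: rednormal}.
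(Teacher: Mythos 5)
Your argument is correct and follows the same route as the paper: surjectivity of $\pi_1^\dagger(X_U,x)\twoheadrightarrow\pi_1^\dagger(X,x)$ and $\pi_1^\dagger(U,s)\twoheadrightarrow\pi_1^\dagger(S,s)$ via Kedlaya's results, from which normality of the image of $\pi_1^\dagger(X_s,x)$ transfers from the top row to the bottom, concluding by Corollary \ref{cor: rednormal}. The mid-proposal detour through "closures of images under faithfully flat maps" is unnecessary (the image of a normal subgroup under a surjection is normal, full stop), but your final version matches the paper's proof.
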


\begin{proof}
We consider the diagram
\[\xymatrix{   \pi_1^\dagger(X_s,x) \ar[r] \ar@{=}[d] &  \pi_1^\dagger(X_U,x)\ar[r] \ar@{->>}[d] &\pi_1^\dagger(U,s) \ar[r]\ar@{->>}[d] & 1 \\ \pi_1^\dagger(X_{s},x) \ar[r] & \pi_1^\dagger(X,x)\ar[r] & \pi_1^\dagger(S,s)\ar[r] & 1.  } \]
where the vertical arrows are surjective by \cite[Theorem 5.2.1, Proposition 5.3.1]{Ked07}. If the top sequence is exact, then the image of $\pi_1^\dagger(X_{s},x) \rightarrow \pi_1^\dagger(X,x)$ is normal, and hence the bottom sequence is exact by Corollary \ref{cor: rednormal}.
\end{proof}

This enables us to prove exactness of the homotopy sequence for curves.

\begin{theorem} \label{theo: hescurves} Therem \ref{theo: main2} is true if $f:X\rightarrow S$ has relative dimension $1$.
\end{theorem}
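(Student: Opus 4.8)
The plan is to use Lemma \ref{lemma: open} to reduce to the case of an affine base, and then — after a finite \'etale base change — to exhibit a lift of the family to characteristic $0$ of exactly the type handled by Theorem \ref{theo: heslift}. By Lemma \ref{lemma: open} we may replace $S$ by an affine open neighbourhood of $s$, so assume $S$ is smooth and affine; as $S$ is connected, the fibres of $f$ are smooth projective curves of some fixed genus. After a finite extension of $k$ and a connected finite \'etale Galois base change $S'\to S$ with group $\Gamma$, the family $X_{S'}:=X\times_S S'\to S'$ acquires a level-$N$ structure for some $N\geq 3$ prime to $p$, hence is classified by a morphism $S'\to\cur M$ to the smooth quasi-projective fine moduli scheme $\cur M$ of such curves. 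Fix a Deligne--Mumford compactification $\cur M\hookrightarrow\overline{\cur M}$ that is projective over $\cur V$ and carries the universal stable curve $\overline{\cur C}\to\overline{\cur M}$, which is smooth exactly over $\cur M$.

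By \cite[Th\'{e}or\`{e}me 6]{Elk73} we lift $S'$ to a smooth affine $\cur V$-scheme $\cur S'$. Since $\overline{\cur M}$ is smooth over $\cur V$ along the image of $S'$ and $S'$ is affine (so that $H^{>0}(S',-)$ of any coherent sheaf vanishes), the classifying morphism lifts compatibly over all the $\varpi$-adic thickenings of $\cur S'$, and — as $\overline{\cur M}$ is projective over $\cur V$ — algebrises by Grothendieck's existence theorem to an actual morphism $\cur S'\to\overline{\cur M}$. Pulling back $\overline{\cur C}$ gives a projective $\cur S'$-scheme $\cur X'\to\cur S'$, flat over $\cur V$, with special fibre $X_{S'}\to S'$ and smooth in a neighbourhood of $X_{S'}$. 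Choosing projective embeddings, completing closures along special fibres as in the Monsky--Washnitzer construction, and \emph{defining} the compactification $\overline{X'}$ of $X_{S'}$ by the fibre product that makes the outer square Cartesian, one obtains a morphism of frames
\[\xymatrix{ X_{S'}\ar[r] \ar[d]_{f_{S'}} & \overline{X'} \ar[r] \ar[d] & \mathfrak X' \ar[d] \\ S'\ar[r] & \overline{S'} \ar[r] & \mathfrak S' }\]
with both squares Cartesian, $(S',\overline{S'},\mathfrak S')$ a Monsky--Washnitzer frame, and $\mathfrak X'\to\mathfrak S'$ projective and smooth near $X_{S'}$. By Theorem \ref{theo: heslift} the homotopy sequence for $(f_{S'},x')$ is exact, where $x'\in X_{S'}(k)$ lies over $x$ (enlarging $k$ once more if needed) and $s'=f_{S'}(x')$.

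To descend, note that $(X_{S'})_{s'}=X_s$, so the image $N$ of $\pi_1^\dagger(X_s,x)\to\pi_1^\dagger(X_{S'},x')$, which — via the closed immersion $\pi_1^\dagger(X_{S'},x')\hookrightarrow\pi_1^\dagger(X,x)$ coming from the finite \'etale cover $X_{S'}\to X$ — is also the image of $\pi_1^\dagger(X_s,x)\to\pi_1^\dagger(X,x)$, equals by the exactness just proved the kernel of $\pi_1^\dagger(X_{S'},x')\to\pi_1^\dagger(S',s')$. This kernel is normal in $\pi_1^\dagger(X_{S'},x')$ and, being the kernel of a $\Gamma$-equivariant morphism, is stable under the $\Gamma$-action; since $X_{S'}\to X$ is Galois with group $\Gamma$, fitting into an exact sequence $1\to\pi_1^\dagger(X_{S'},x')\to\pi_1^\dagger(X,x)\to\Gamma\to 1$, it follows that $N$ is normal in $\pi_1^\dagger(X,x)$. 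Hence the homotopy sequence for $(f,x)$ is exact by Corollary \ref{cor: rednormal}.

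The crux is the lifting step: one must produce a smooth projective lift carrying precisely the frame data demanded by Theorem \ref{theo: heslift}, and the only clean source of such a lift seems to be the moduli of curves, which forces one both to pass to a finite \'etale cover — so as to have a classifying morphism into a \emph{scheme} rather than a stack — and to use a \emph{projective} compactification of that moduli scheme, so that a formal lift of the classifying morphism over the affine (hence non-complete) base $\cur S'$ can be algebrised by Grothendieck's existence theorem; the auxiliary cover is then removed by the Galois descent above.
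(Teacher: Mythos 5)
Your overall strategy — reduce to an affine base via Lemma \ref{lemma: open}, classify the family by a morphism to a moduli scheme of curves, lift that morphism to characteristic~$0$, and feed the resulting frame into Theorem \ref{theo: heslift} — is the same as the paper's. The place where you genuinely diverge is the choice of moduli scheme: you rigidify by adding a level-$N$ structure (requiring a finite \'etale Galois base change $S'\to S$ and a closing Galois-descent step), whereas the paper rigidifies by tri-canonically embedding $X$ in $\P^{5g-6}_S$, which produces a classifying map from $S$ itself to a fine moduli \emph{scheme} $H^0_g$ over $\Z$ and so removes the need for the cover and the descent. Both are reasonable choices for $g\ge 2$. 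However, there are two real gaps in your argument.

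First, the cases $g\le 1$ are not covered by your construction, and they cannot be. For $g=0$ the moduli ``stack'' is $B\mathrm{PGL}_2$; there is no level structure to add and no fine moduli scheme. For $g=1$ a level-$N$ structure on a smooth genus-one fibration presupposes a zero section, and a nontrivial torsor under the Jacobian need not acquire one after a finite \'etale base change. The paper treats these separately: $g=0$ by the argument of Lemma \ref{lemma: projbun}, and $g=1$ by first replacing $X\to S$ with $X\times_S X\to X$, which tautologically has a section (the diagonal), and then passing to the moduli scheme of Weierstrass models. Your proof needs some analogous device.

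Second, the algebraisation step does not work as stated. The compatible system of lifts over the $\varpi$-adic thickenings of $\cur S'$ gives a morphism from the \emph{formal completion} $\widehat{\cur S'}$ to $\overline{\cur M}$, i.e.\ a family of stable curves over the $\varpi$-adic completion $\widehat A$ of $A=\Gamma(\cur S',\cur O)$. Grothendieck's existence theorem, which equates coherent sheaves on a proper scheme over a complete Noetherian base with coherent sheaves on its formal completion, applies when the \emph{base} is complete; it does not algebraise a morphism $\widehat{\cur S'}\to\overline{\cur M}$ to a morphism $\cur S'\to\overline{\cur M}$ when $A$ is merely a finitely generated smooth $\cur V$-algebra. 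What is needed is an approximation theorem over a Henselian pair: this is exactly why the paper introduces the $\varpi$-adic Henselisation $A^h$ of $A$ and invokes Raynaud's Th\'eor\`eme 2 of \cite{Ray72} to lift $S\to V$ to $\spec{A^h}\to V$, then spreads out to an \'etale $A$-algebra with the same special fibre. Your proof should be adjusted to follow this route (or cite an equivalent approximation argument) rather than appeal to Grothendieck existence.
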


\begin{proof}
Let $g$ denote the genus of $f$. If $g=0$ then we are done by the proof of Lemma \ref{lemma: projbun} - to make the argument work, we only need a rational point on the fibre over $s$, which we have by assumption. We will treat the case $g\geq 2$ and then point out where the argument needs to be modified to work for $g=1$.

By Proposition \ref{lemma: open} we are free to replace $S$ by any open sub-scheme containing $s$, in particular we may assume that $S=\spec{A_0}$ is affine, and $f:X\rightarrow S$ can be tri-canonically embedded in $\P^{5g-6}_S$. If we let $H_g^0$ denote the moduli scheme (over $\Z$) of such tri-canonically embedded curves, and $Z_g^0\rightarrow H_g^0$ the universal curve, then we obtain a Cartesian diagram of schemes
\[ \xymatrix{ X\ar[r]\ar[d] & Z_g^0 \ar[d] \\ S\ar[r] & H_g^0. } \]
After possibly shrinking $S$ further we may assume that there exists an open affine sub-scheme $V\subset H_g^0$ through which $S\rightarrow H_g^0$ factors. 

Now by \cite[Th\'{e}or\`{e}me 6]{Elk73} we may lift $A_0$ to a smooth $\cur{V}$-algebra $A$, let $A^h$ denote the $\varpi$-adic Henselisation of $A$. Since $V$ is smooth and affine over $\Z$ by \cite[Corollary 1.7]{DM69} we may apply \cite[Th\'{e}or\`{e}me 2]{Ray72} to deduce that the given morphism $S\rightarrow V$ lifts to a morphism
\[ \spec{A^h}\rightarrow V. \]
Since $V$ is of finite type over $\Z$, it follows that after possibly passing to some \'{e}tale $A$-algebra with the same special fibre, we may assume that the family of curves $X\rightarrow S$ lifts to a family $\cur{X}\rightarrow \cur{S}=\spec{A}$ over $A$, which is moreover a closed sub-scheme of $\P^{5g-6}_\cur{S}$. Now choose a projective embedding
\[ \cur{S} \hookrightarrow \A^N_\cur{V} \hookrightarrow \P^N_\cur{V}, \]
let $\overline{\cur{S}}$ be the closure of $\cur{S}$ inside $\P^N_\cur{V}$, and let $\overline{\cur{X}}$ be the closure of $\cur{X}$ inside $\P^{5g-6}_{\overline{\cur S}}$. Setting $\mathfrak{X}=\widehat{\overline{\cur X}}$ and $\mathfrak{S}=\widehat{\overline{\cur S}}$ we find ourselves in the situation of Theorem \ref{theo: heslift}.

When $g=1$ we can argue as follows. First of all, we consider the base change $X\times_S X\rightarrow X$ of $f$ by itself, equipped with the rational point $(x,x)$. Then we have a commutative diagram
\[ \xymatrix{ \pi_1^\dagger(X_s,x)\ar[r] \ar@{=}[d] & \pi_1^\dagger(X\times_S X,(x,x)) \ar[r] \ar@{->>}[d] & \pi_1^\dagger(X,x) \ar@{->>}[d] \ar[r] & 1 \\ \pi_1^\dagger(X_s,x) \ar[r] & \pi_1^\dagger(X,x) \ar[r] & \pi_1^\dagger(S,s) \ar[r] & 1  } \]
where the surjectivity of the vertical arrows follows from Corollary \ref{cor: weakhes}. If the top sequence is exact, then it follows that the image of $ \pi_1^\dagger(X_s,x) \rightarrow \pi_1^\dagger(X,x)$ is a normal subgroup, and hence the bottom sequence is exact by Corollary \ref{cor: rednormal}. In particular, after replacing $X\rightarrow S$ by $X\times_S X\rightarrow X$, we may assume that $f:X\rightarrow S$ admits a section, i.e. is an elliptic curve.

Hence after possibly localising on $S$ and using Lemma \ref{lemma: open}, we may assume that we have a smooth Weierstrass model $X\hookrightarrow \P^2_S$ of $X$. We now replace the scheme $H^0_g$ in the previous argument with the smooth moduli scheme over $\Z$ parametrising Weierstrass models of elliptic curves.
\end{proof}

\section{Cutting a smooth projective morphism by curves}

Using Theorem \ref{theo: weaklef}, we can now finally complete the proof of Theorem \ref{theo: main2} by reducing to the case of a family of smooth projective curves, and hence to Theorem \ref{theo: hescurves}. So suppose that we are in the situation of Theorem \ref{theo: main2}; by Lemma \ref{lemma: open} we may assume that $S$ is quasi-projective, and hence that there exists a global closed immersion $X\hookrightarrow \P^n_S$. Let $\widetilde{S}$ denote the product $\check{\P}^n_S \times_S \ldots \times_S \check{\P}^n_S$ of $d-1$ copies of the dual projective space, and define
\[ \widetilde{X} \subset X\times_S \widetilde{S} \]
to be the sub-scheme of tuples $(x,H_1,\ldots, H_{d-1})$ such that $x\in H_1\cap \ldots \cap H_{d-1}$. We therefore have a diagram
\[ \xymatrix{ \widetilde{X} \ar[r]\ar[d]_{\tilde f} & X \ar[d]^f \\ \widetilde{S} \ar[r] & S  } \]
such that $\widetilde{f}$ is projective, with generic fibre a smooth, projective, geometrically connected curve of some genus $g$. Moreover, we may choose an open sub-scheme $U\subset \widetilde{S}$, surjective over $S$, such that the pull-back $\tilde{f}_U:\widetilde{X}_U\rightarrow U$ is smooth with geometrically connected fibres. In particular, after possibly making a finite extension of $k$, we may assume that there exists some $k$-rational point $u\in U$ lifting $s\in S(k)$, and some $\tilde{x}\in \widetilde{X}_U(k)$ such that $\tilde{f}_U(\tilde{x})=u$.

In particular, combining Lemma \ref{lemma: projbun} with \cite[Theorem 5.2.1, Proposition 5.3.1]{Ked07} we have a commutative diagram
\[\xymatrix{   \pi_1^\dagger(\widetilde{X}_{u},\tilde x) \ar[r] \ar@{=}[d] &  \pi_1^\dagger(\widetilde{X}_U,\tilde x)\ar[r] \ar@{->>}[d] &\pi_1^\dagger(U,u) \ar[r]\ar@{->>}[d] & 1 \\
\pi_1^\dagger(\widetilde{X}_{u},\tilde x) \ar[r] \ar[d] &  \pi_1^\dagger(\widetilde{X},\tilde x)\ar[r] \ar[d]^{\cong} &\pi_1^\dagger(\widetilde S,u) \ar[r]\ar[d]^{\cong} & 1 \\  \pi_1^\dagger(X_{s},x) \ar[r] & \pi_1^\dagger(X,x)\ar[r] & \pi_1^\dagger(S,s)\ar[r] & 1.  } \]

\begin{proposition} \label{prop: redcurves} If the homotopy sequence of $(\tilde{f}_U,\tilde x)$ is exact, then so is that of $(f,x)$.
\end{proposition}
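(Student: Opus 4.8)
The plan is to apply Corollary \ref{cor: rednormal}: since the bottom sequence of the displayed diagram is already weakly exact by Corollary \ref{cor: weakhes}, it suffices to show that the image $M$ of $\pi_1^\dagger(X_s,x)\to\pi_1^\dagger(X,x)$ is a normal subgroup. I would begin by recording the structural features of the diagram already in place. Over a point $x\in X$ the fibre of $\widetilde X\to X$ is the product of $d-1$ copies of the linear subspace $\{H\in\check{\P}^n_{k(x)}:x\in H\}\cong\P^{n-1}_{k(x)}$, so $\widetilde X\to X$, and likewise $\widetilde S\to S$, are products of projective bundles; hence by Lemma \ref{lemma: projbun} the vertical arrows $\pi_1^\dagger(\widetilde X,\tilde x)\to\pi_1^\dagger(X,x)$ and $\pi_1^\dagger(\widetilde S,u)\to\pi_1^\dagger(S,s)$ are isomorphisms, while since $\widetilde X_U\subset\widetilde X$ and $U\subset\widetilde S$ are dense open immersions the vertical arrows of the top square are faithfully flat by \cite[Theorem 5.2.1, Proposition 5.3.1]{Ked07}.

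Next I would extract a normal subgroup from the exactness hypothesis. Since the top row is exact, $\mathrm{im}\bigl(\pi_1^\dagger(\widetilde X_u,\tilde x)\to\pi_1^\dagger(\widetilde X_U,\tilde x)\bigr)$ equals $\ker\bigl(\pi_1^\dagger(\widetilde X_U,\tilde x)\to\pi_1^\dagger(U,u)\bigr)$, a closed normal subgroup; pushing it forward along the faithfully flat homomorphism $\pi_1^\dagger(\widetilde X_U,\tilde x)\twoheadrightarrow\pi_1^\dagger(\widetilde X,\tilde x)\cong\pi_1^\dagger(X,x)$ shows that $N:=\mathrm{im}\bigl(\pi_1^\dagger(\widetilde X_u,\tilde x)\to\pi_1^\dagger(X,x)\bigr)$ is a closed normal subgroup of $\pi_1^\dagger(X,x)$. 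Commutativity of the lower-left square says that $\widetilde X_u\to X$ factors through $X_s$, so $N\subseteq M$ and moreover $N$ is the image under $\pi_1^\dagger(X_s,x)\to\pi_1^\dagger(X,x)$ of the subgroup $N':=\mathrm{im}\bigl(\pi_1^\dagger(\widetilde X_u,\tilde x)\to\pi_1^\dagger(X_s,x)\bigr)$.

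The remaining, crucial point is the reverse inclusion $M\subseteq N$, and this is where weak Lefschetz enters. By construction $\widetilde X_u=X_s\cap H_1\cap\cdots\cap H_{d-1}$ inside $\P^n_k$, so $\widetilde X_u\hookrightarrow X_s$ is realised as a chain of hyperplane sections $X_s=Y_0\supset Y_1\supset\cdots\supset Y_{d-1}=\widetilde X_u$ with $Y_{i+1}=H_{i+1}\cap Y_i$ and $\dim Y_i=d-i\geq 2$ for $0\le i\le d-2$. Applying Theorem \ref{theo: weaklef} to each step and composing, the normal closure of $N'$ in $\pi_1^\dagger(X_s,x)$ is all of $\pi_1^\dagger(X_s,x)$. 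Factoring $\pi_1^\dagger(X_s,x)\twoheadrightarrow M\hookrightarrow\pi_1^\dagger(X,x)$ and using that the image of a normal closure under a faithfully flat homomorphism is the normal closure of the image, it follows that $M$ is the normal closure of $N$ inside $M$; but $N$ is already normal in $\pi_1^\dagger(X,x)$, hence in $M$, so $M=N$. Thus $M$ is normal, and Corollary \ref{cor: rednormal} completes the proof (and, combined with Theorem \ref{theo: hescurves}, Theorem \ref{theo: main2}).

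I expect the real work to be hidden in the last paragraph: Theorem \ref{theo: weaklef} requires each intermediate section $Y_i=X_s\cap H_1\cap\cdots\cap H_i$ to be smooth and geometrically connected, not merely the final curve $\widetilde X_u$, and this is not automatic for an arbitrary $u$. It therefore has to be built into the choice of the open set $U\subset\widetilde S$ and of the point $u$: one shrinks $U$ so that its image in each partial product $\check{\P}^n_S\times_S\cdots\times_S\check{\P}^n_S$ lands in the locus over which the corresponding incidence variety is smooth with geometrically connected fibres — possible by generic smoothness together with iterated Bertini, since each such locus is open, fibrewise dense, and surjective onto $S$ — and then picks $u\in U(k)$ lying over $s$ whose hyperplanes all pass through $x$, together with the resulting point $\tilde x\in\widetilde X_u(k)$ (all after a harmless finite extension of $k$). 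The only other point to be careful about is the elementary bookkeeping with normal closures of closed subgroups of affine group schemes: that images and preimages of closed normal subgroups under faithfully flat homomorphisms are again closed and normal, and that the normal closure operation commutes with faithfully flat images. Both of these are routine.
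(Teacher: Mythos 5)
Your proposal is correct, and it rests on the same two pillars as the paper's argument: exactness of the top row gives a closed normal subgroup $N$ of $\pi_1^\dagger(X,x)$, and the weak Lefschetz statement (Theorem \ref{theo: weaklef}) compares $\widetilde{X}_u$ with $X_s$. The organization is somewhat different, however. The paper verifies the two conditions of Theorem \ref{theo: tann} directly for the middle sequence $\pi_1^\dagger(\widetilde X_u)\to\pi_1^\dagger(\widetilde X)\to\pi_1^\dagger(\widetilde S)\to 1$ (this is necessary because $\widetilde X\to\widetilde S$ is not smooth, so Corollary \ref{cor: weakhes} is unavailable for it), uses the top-row normality to upgrade weak exactness to exactness, and then does a diagram chase to transfer exactness down to $f$. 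Your version is shorter: you aim directly at the bottom row, where $f$ \emph{is} smooth projective so Corollary \ref{cor: rednormal} applies, and you reduce everything to the single assertion that $M=\mathrm{im}(\pi_1^\dagger(X_s)\to\pi_1^\dagger(X))$ is normal, which you prove by pinching $M$ between $N$ and its normal closure in $M$. In the paper the Lefschetz input enters in representation-theoretic form (an isocrystal trivial on $\widetilde X_u$ is trivial on $X_s$) while you use the group-theoretic form (the normal closure of the image is everything); these are Tannakian-dual to one another, so there is no real distance between them. Your closing observation is apt: the paper's appeal to Theorem \ref{theo: weaklef} ``for the map $\widetilde X_u\to X_s$'' is really an iteration over the chain $X_s=Y_0\supset Y_1\supset\cdots\supset Y_{d-1}=\widetilde X_u$, and this needs each intermediate $Y_i$ to be smooth (and, for $\dim\geq 2$, automatically geometrically connected), which has to be arranged when choosing $U$ and the point $u$ by generic smoothness and iterated Bertini; the paper leaves this implicit, but it is a routine refinement of the construction of $U$, exactly as you say.
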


\begin{proof} We first claim that under the hypothesis of the proposition, the sequence 
\[ \pi_1^\dagger(\widetilde{X}_{u},\tilde x) \rightarrow  \pi_1^\dagger(\widetilde{X},\tilde x) \rightarrow \pi_1^\dagger(\widetilde S,u)\rightarrow 1\]
satisfies the conditions of Theorem \ref{theo: tann} (note that this does \emph{not} follow from Corollary \ref{cor: weakhes}). Indeed, surjectivity of $\pi_1^\dagger(\widetilde{X},\tilde x) \rightarrow \pi_1^\dagger(\widetilde S,u)$ follows from that of $\pi_1^\dagger(X,x) \rightarrow \pi_1^\dagger(S,s)$, and one half of (1) is clear. For the other half of (1), suppose that $E\in \mathrm{Isoc}^\dagger(\widetilde{X})$ is such that $E|_{\widetilde{X}_u}$ is trivial. We may assume by Lemma \ref{lemma: projbun} that $E$ comes from an object $E'$ of $\mathrm{Isoc}^\dagger(X/K)$. Applying Theorem \ref{theo: weaklef} to the map $\widetilde{X}_u \rightarrow X_s$ we can see that in fact $E'\mid_{X_s}$ is trivial, and so $E'\cong f^*(F')$ for some $F'\in \mathrm{Isoc}^\dagger(S/K)$. Hence $E\cong \tilde{f}^*(F)$ for some $F\in \mathrm{Isoc}^\dagger(\widetilde S/K)$ as required. To prove (2) we note that if the top sequence is exact then the image of 
\[ \pi_1^\dagger(\widetilde{X}_{u},\tilde x) \rightarrow  \pi_1^\dagger(\widetilde{X},\tilde x) \]
is a normal subgroup, and hence for any representation $V$ of $\pi_1^\dagger(\widetilde{X},x)$, we know that $V^{\pi_1^\dagger(\widetilde{X}_u,\tilde x)}$ is in fact stable under $\pi_1^\dagger(\widetilde{X},\tilde x)$. 

Since we have already noted that the image of $\pi_1^\dagger(\widetilde{X}_{u},\tilde x) \rightarrow  \pi_1^\dagger(\widetilde{X},\tilde x)$
is a normal subgroup it follows that the sequence $\pi_1^\dagger(\widetilde{X}_{u},\tilde x) \rightarrow  \pi_1^\dagger(\widetilde{X},\tilde x) \rightarrow \pi_1^\dagger(\widetilde S,u)\rightarrow 1 $
is exact; the exactness of
\[ \pi_1^\dagger(X_s,x) \rightarrow  \pi_1^\dagger(X,x) \rightarrow \pi_1^\dagger(S,s)\rightarrow 1\]
now follows from a simple diagram chase.
 \end{proof}

\begin{proof}[Proof of Theorem \ref{theo: main2}]
By Proposition \ref{prop: redcurves} we may assume that $f$ has relative dimension $1$, in which case we apply Theorem \ref{theo: hescurves}.
\end{proof}

\section{Applications} \label{sec: app}

In this final part we deduce a couple of corollaries of Theorem \ref{theo: main2}.

\begin{theorem} \label{theo: lefschetz} Let $X\subset \P^n_k$ be smooth, projective and geometrically connected. Assume that $\dim X\geq 2$ and let $Y=H\cap X$ be a smooth hyperplane section. Let $y\in Y(k)$. Then the map
\[ \pi_1^\dagger(Y,y)\rightarrow \pi_1^\dagger(X,y)\]
is surjective. 
\end{theorem}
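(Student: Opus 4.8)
The plan is to \emph{upgrade} Theorem \ref{theo: weaklef}: that result only says the \emph{normal closure} of the image of $\pi_1^\dagger(Y,y)\to\pi_1^\dagger(X,y)$ is everything, and I want to promote this to the image \emph{itself} being everything. The extra ingredient is that, with the full homotopy exact sequence of Theorem \ref{theo: main2} now available, a suitable such image can be exhibited as an honest kernel, hence is normal; and a closed normal subgroup which equals its own normal closure is the whole group. Concretely, I would reuse verbatim the geometric setup of the proof of Theorem \ref{theo: weaklef}: after possibly enlarging $k$, choose a second hyperplane $H'$ with $y\in Z:=Y\cap H'$ and $Z$ smooth; let $\pi:\widetilde X\to X$ be the blow-up along $Z$, $\widetilde Y\to Y$ the proper transform, and fix a lift $\tilde y\in\widetilde Y(k)$ of $y$. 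Since $\pi_*$ induces isomorphisms $\pi_1^\dagger(\widetilde X,\tilde y)\cong\pi_1^\dagger(X,y)$ and $\pi_1^\dagger(\widetilde Y,\tilde y)\cong\pi_1^\dagger(Y,y)$, compatibly, it suffices to prove that $\pi_1^\dagger(\widetilde Y,\tilde y)\to\pi_1^\dagger(\widetilde X,\tilde y)$ is surjective. Recall from that proof that the pencil through $X\cap H$ and $X\cap H'$ gives a projective map $a:\widetilde X\to\P^1_k$ with a section $\sigma$ (supported on $\pi^{-1}(y)$), that $\widetilde Y$ is a fibre of $a$, and that over the open $U\subset\P^1_k$ where $a$ is smooth --- where Corollary \ref{cor: weakhes}, and hence Theorem \ref{theo: main2}, applies --- the restriction $b:V\to U$ of $a$, with $V:=a^{-1}(U)$, is a smooth projective morphism of smooth $k$-varieties with geometrically connected fibres and base, $V$ is a dense open subvariety of $\widetilde X$, $\widetilde Y\subset V$ is a fibre of $b$, and $\sigma$ restricts to a section $\sigma|_U$ of $b$.

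The key step is then to re-run the group-theoretic argument of Theorem \ref{theo: weaklef} with Theorem \ref{theo: main2} replacing Corollary \ref{cor: weakhes}. Theorem \ref{theo: main2} applied to $b$ says that
\[\pi_1^\dagger(\widetilde Y,\tilde y)\xrightarrow{\iota_*}\pi_1^\dagger(V,\tilde y)\xrightarrow{b_*}\pi_1^\dagger(U,a(\tilde y))\to 1\]
is exact, so $N:=\mathrm{im}(\iota_*)=\ker(b_*)$ is a closed normal subgroup of $\pi_1^\dagger(V,\tilde y)$, and, as $\sigma|_U$ splits $b_*$, the group scheme $\pi_1^\dagger(V,\tilde y)$ is generated by $N$ together with $\sigma_*\pi_1^\dagger(U,a(\tilde y))$. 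By \cite[Theorem 5.2.1, Proposition 5.3.1]{Ked07} the open immersion $j:V\hookrightarrow\widetilde X$ induces a faithfully flat homomorphism $j_*:\pi_1^\dagger(V,\tilde y)\twoheadrightarrow\pi_1^\dagger(\widetilde X,\tilde y)$, and since $j\circ\sigma|_U$ equals the composite $U\hookrightarrow\P^1_k\xrightarrow{\sigma}\widetilde X$, the map $j_*\circ\sigma_*$ factors through $\pi_1^\dagger(\P^1_k,a(\tilde y))$, which is trivial by Lemma \ref{lemma: projbun}. Applying the surjection $j_*$ to the above generators therefore shows that $\pi_1^\dagger(\widetilde X,\tilde y)$ is generated by $j_*(N)$ alone, i.e. $\pi_1^\dagger(\widetilde X,\tilde y)=j_*(N)$; and since $j\circ\iota$ is the inclusion $\widetilde Y\hookrightarrow\widetilde X$, this $j_*(N)$ is precisely the image of $\pi_1^\dagger(\widetilde Y,\tilde y)\to\pi_1^\dagger(\widetilde X,\tilde y)$. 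Transporting back along the blow-up isomorphisms yields surjectivity of $\pi_1^\dagger(Y,y)\to\pi_1^\dagger(X,y)$.

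I do not expect a genuine obstacle, as Theorem \ref{theo: main2} carries the real weight; the points requiring (routine) care are: (i) that $b:V\to U$ meets the hypotheses of Theorem \ref{theo: main2} --- its smoothness, projectivity, and the geometric connectedness of base and fibres are already used, via Corollary \ref{cor: weakhes}, in the proof of Theorem \ref{theo: weaklef}; and (ii) the elementary group-scheme bookkeeping --- that an extension split by a section is generated by the kernel and the image of the section, that a faithfully flat homomorphism sends such a generating family to a generating family, and that the blow-up isomorphisms of Theorem \ref{theo: weaklef} intertwine the maps out of $\pi_1^\dagger(\widetilde Y,\tilde y)$ and $\pi_1^\dagger(Y,y)$. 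A slightly shorter alternative keeps Theorem \ref{theo: weaklef} as a black box and only checks that $\mathrm{im}(\pi_1^\dagger(Y,y)\to\pi_1^\dagger(X,y))$ is normal: via the blow-up it is the image under the faithfully flat $j_*$ of the closed normal subgroup $\ker(b_*)$, hence closed and normal, and therefore equal to its own normal closure, which Theorem \ref{theo: weaklef} identifies with $\pi_1^\dagger(X,y)$.
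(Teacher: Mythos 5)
Your proposal is correct and takes essentially the same approach as the paper: the paper's entire proof of Theorem \ref{theo: lefschetz} is the one-line instruction to copy the proof of Theorem \ref{theo: weaklef} with Theorem \ref{theo: main2} substituted for Corollary \ref{cor: weakhes}, and your write-up is precisely the careful execution of that substitution (your "shorter alternative" at the end is a harmless repackaging of the same idea, observing that the image is now a genuine normal subgroup and hence equals the normal closure already identified in Theorem \ref{theo: weaklef}).
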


\begin{proof} We simply copy the proof of Theorem \ref{theo: weaklef}, replacing all instances of Corollary \ref{cor: weakhes} with Theorem \ref{theo: main2}.
\end{proof}

\begin{corollary} Let $X,Y$ be as in the statement of Theorem \ref{theo: lefschetz}. Then any irreducible $E\in F\text{-}\mathrm{Isoc}(X/K)$ remains irreducible upon restriction to $Y$.
\end{corollary}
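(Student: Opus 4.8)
The plan is to deduce this from the surjectivity statement of Theorem \ref{theo: lefschetz} by a purely Tannakian argument. Recall first that since $X$ and $Y$ are smooth and projective, convergent and overconvergent isocrystals on them coincide, so that $F\text{-}\mathrm{Isoc}(X/K)$ (resp. $F\text{-}\mathrm{Isoc}(Y/K)$) is the category of overconvergent isocrystals equipped with a Frobenius structure, and restriction along $Y\hookrightarrow X$ induces a functor $F\text{-}\mathrm{Isoc}(X/K)\to F\text{-}\mathrm{Isoc}(Y/K)$ lying over the restriction functor $\mathrm{Isoc}^\dagger(X/K)\to\mathrm{Isoc}^\dagger(Y/K)$. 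By Theorem \ref{theo: lefschetz} the homomorphism $\pi_1^\dagger(Y,y)\to\pi_1^\dagger(X,y)$ is faithfully flat, so the standard Tannakian dictionary for a faithfully flat homomorphism of affine group schemes (as already used repeatedly, e.g. in the proof of Theorem \ref{theo: heslift}) shows that $\mathrm{Isoc}^\dagger(X/K)\to\mathrm{Isoc}^\dagger(Y/K)$ is fully faithful with essential image stable under subobjects; in particular, for any $E_0\in\mathrm{Isoc}^\dagger(X/K)$, restriction induces a bijection between the sub-isocrystals of $E_0$ and those of $E_0|_Y$.

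Now let $E\in F\text{-}\mathrm{Isoc}(X/K)$ be irreducible, write $E_0$ for its underlying overconvergent isocrystal and $\phi_E\colon \phi^*E_0\isomto E_0$ for its Frobenius structure, and suppose for contradiction that $E|_Y$ admits a sub-$F$-isocrystal $F$ with $0\neq F\subsetneq E|_Y$. Let $F_0\subseteq E_0|_Y$ be its underlying sub-isocrystal. By the previous paragraph there is a unique sub-isocrystal $G_0\subseteq E_0$ in $\mathrm{Isoc}^\dagger(X/K)$ with $G_0|_Y=F_0$. I claim $G_0$ is stable under $\phi_E$. Indeed, $\phi_E(\phi^*G_0)$ is again a sub-isocrystal of $E_0$ (as $\phi^*$ is exact and $\phi_E$ an isomorphism), and restricting to $Y$ gives $\phi_E|_Y(\phi^*F_0)=F_0=G_0|_Y$, because $F$ is $\phi$-stable inside $E|_Y$; by the uniqueness in the displayed bijection, $\phi_E(\phi^*G_0)=G_0$. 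Hence $(G_0,\phi_E|_{G_0})$ is a sub-$F$-isocrystal of $E$. Moreover $G_0\neq 0$ and $G_0\neq E_0$, since under the bijection of subobjects these would force $F_0=0$ or $F_0=E_0|_Y$, contradicting $0\neq F\subsetneq E|_Y$. Thus $E$ has a proper nonzero sub-$F$-isocrystal, contradicting its irreducibility, and therefore $E|_Y$ is irreducible.

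The argument is essentially formal once Theorem \ref{theo: lefschetz} is available; there is no serious obstacle. The one point worth stating carefully is that descent of subobjects along $\pi_1^\dagger(Y,y)\to\pi_1^\dagger(X,y)$ must be used not only for the bare isocrystal but in a way compatible with the Frobenius pullback $\phi^*$, which is what lets one transport the Frobenius structure to $G_0$; this is harmless because $\phi^*$ on $\mathrm{Isoc}^\dagger$ commutes with restriction to $Y$ and the descent is unique. Note also that $Y$ is smooth, projective and (by the Lefschetz/Bertini hypotheses implicit in Theorem \ref{theo: lefschetz}) geometrically connected with the rational point $y$, so that $\pi_1^\dagger(Y,y)$ and the functor $\mathrm{Isoc}^\dagger(X/K)\to\mathrm{Isoc}^\dagger(Y/K)$ are defined and behave as required.
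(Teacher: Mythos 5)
Your proof is correct and follows essentially the same route as the paper's: lift the sub-isocrystal of $E|_Y$ to a sub-isocrystal of $E$ via the surjectivity of $\pi_1^\dagger(Y,y)\to\pi_1^\dagger(X,y)$ from Theorem \ref{theo: lefschetz}, then verify Frobenius-stability of the lift by checking the required equality of sub-objects after restriction to $Y$. The paper phrases this more tersely, but the underlying Tannakian argument is the same.
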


\begin{proof}
If $E'\subset E|_Y$ is a sub-$F$-isocrystal, then by Theorem \ref{theo: lefschetz} there exists some sub-isocrystal $E''\subset E$ such that $E''|_Y=E'$. To check stability of $E''$ under the Frobenius of $E$ boils down to showing the equality of $E''$ and $F^*E''\subset F^*E \cong E$ as sub-objects of $E$, which can clearly be checked after restricting to $Y$.
\end{proof}

We can also use Theorem \ref{theo: main2} to compare the $p$-adic fundamental group $\pi_1^\dagger(X)$ with the \'etale one $\pi_1^\et(X)$. So let us assume that $k$ is algebraically closed, $X$ is smooth, projective and connected over $k$ and that $x\in X(k)$. Then there is an obvious functor
\[ \mathrm{\acute{E}t}(X)\rightarrow \mathrm{Isoc}^\dagger(X/K) \]
which sends a finite \'etale cover $f:Y\rightarrow X$ to $f_*\cur{O}_{Y/K}^\dagger$. This gives rise to a homomorphism of pro-algebraic groups
\[ \pi_1^\dagger(X,x)\rightarrow \pi_1^\et(X,x) \]
and hence to a homomorphism of pro-finite groups
\[  \pi_0(\pi_1^\dagger(X,x))\rightarrow \pi_1^\et(X,x)\]
from the component group of $\pi_1^\dagger(X,x)$ to the \'etale fundamental group. The following strengthens a result of Crew \cite[Proposition 4.4]{Cre92}.

\begin{theorem} \label{theo: pi0}
The map $\pi_0(\pi_1^\dagger(X,x))\rightarrow \pi_1^\et(X,x)$ is an isomorphism.
\end{theorem}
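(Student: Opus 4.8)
The plan is to verify directly that the two Tannakian conditions characterising an isomorphism of affine group schemes hold for the map $\theta\colon\pi_0(\pi_1^\dagger(X,x))\to\pi_1^\et(X,x)$. First observe that since $\pi_1^\et(X,x)$ is pro-finite, the homomorphism $\pi_1^\dagger(X,x)\to\pi_1^\et(X,x)$ automatically kills the identity component, so it does factor through $\theta$; and $\pi_0(\pi_1^\dagger(X,x))$ is the Tannaka dual of the full Tannakian subcategory $\mathrm{Isoc}^\dagger(X/K)^{\mathrm f}\subset\mathrm{Isoc}^\dagger(X/K)$ of objects with finite monodromy group. So the statement amounts to saying that $\mathrm{Isoc}^\dagger(X/K)^{\mathrm f}$ is precisely the subcategory generated by the isocrystals $f_*\cur O^\dagger_{Y/K}$ attached to finite \'etale covers $f\colon Y\to X$, compatibly with fibre functors.

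For surjectivity of $\theta$, let $f\colon Y\to X$ be a connected finite Galois cover with group $G$. Since $X$ is proper, so is $Y$, hence $f_*\cur O^\dagger_{Y/K}$ is a genuine object $A_G$ of $\mathrm{Isoc}^\dagger(X/K)$, carrying the commutative algebra structure $\mathrm{Fun}(f^{-1}(-))$ and a $G$-action. By adjunction $\mathrm{Hom}_{\mathrm{Isoc}^\dagger(X/K)}(\cur O^\dagger_X,A_G)=H^0_\rig(Y/K)=K$ because $Y$ is connected; evaluating at $x$ this says the image of $\pi_1^\dagger(X,x)$ in $G$ acts with a single orbit on the $G$-torsor $f^{-1}(x)$, hence equals $G$. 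So $\pi_1^\dagger(X,x)\twoheadrightarrow G$, and running over all such $G$, which are cofinal among finite quotients of $\pi_1^\et(X,x)$, shows $\theta$ is faithfully flat.

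For injectivity of $\theta$ — the heart of the matter — I must show every $E\in\mathrm{Isoc}^\dagger(X/K)^{\mathrm f}$, with monodromy group $G$ say, comes from a finite \'etale cover, i.e. that $\pi_1^\dagger(X,x)\twoheadrightarrow G$ factors through $\pi_1^\et(X,x)$. Inside $\langle E\rangle\cong\mathrm{Rep}_K(G)$ sits the structure algebra $A_G=\mathrm{Fun}(G)$: a coherent $j^\dagger\cur O_{\mathfrak X_K}$-algebra with integrable connection, finite flat of rank $|G|$ over $\cur O^\dagger_X$, with a $G$-action and $A_G^G=\cur O^\dagger_X$. Since $X$ is proper, overconvergent and convergent isocrystals coincide, so $A_G$ is a convergent isocrystal; hence its restriction to the tube $]x'[$ of any closed point $x'\in X(k)$ — an open polydisc — is trivial as a module with connection, and, comparing fibres at $x'$, trivial as an algebra: $A_G|_{]x'[}\cong\mathrm{Fun}(G)\otimes_K\cur O_{]x'[}$. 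By relative rigid analytic GAGA $A_G$ algebrises to a finite flat $X$-scheme $\pi\colon\widetilde X\to X$ with $G$-action, and the local triviality just found shows $\pi$ is unramified over every closed point, hence finite \'etale. Then $A_G\cong\pi_*\cur O^\dagger_{\widetilde X/K}$ as $G$-algebras with connection, so $E\in\langle\pi_*\cur O^\dagger_{\widetilde X/K}\rangle$, i.e. the monodromy of $E$ factors through $\pi_1^\et(X,x)$, as required. (Alternatively one can lower $\dim X$ by passing to a smooth hyperplane section, using Theorem \ref{theo: lefschetz} for $\pi_1^\dagger$ and the Lefschetz theorem for $\pi_1^\et$ of SGA~2; for $\dim X\geq 3$ this reduces to the surface and curve cases, still handled by the argument above.)

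The main obstacle will be making the third step precise: showing that $A_G$ really algebrises to an honest finite $X$-scheme, that triviality of the convergent isocrystal on each tube genuinely forces that scheme to be finite \emph{\'etale} (this needs the comparison between tubes and formal completions, and in particular flatness and unramifiedness at all closed points), and then identifying the canonical isocrystal of the resulting cover with $A_G$. In effect the real work is to prove that finite \'etale covers of $X$ span a full Tannakian subcategory of $\mathrm{Isoc}^\dagger(X/K)$, closed under subquotients, with Tannaka group $\pi_1^\et(X,x)$ — the key inputs being that for proper $X$ one has $\mathrm{Isoc}^\dagger(X/K)=\mathrm{Isoc}^{\mathrm{conv}}(X/K)$ and that convergent isocrystals are trivial on the tube of a closed point, both of which should be quoted carefully.
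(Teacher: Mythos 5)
Your approach is genuinely different from the paper's, and it has a real gap at the crucial step of passing from characteristic $0$ to characteristic $p$.

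The paper does not try to construct the finite \'{e}tale cover directly. Instead it invokes Crew's result \cite[Proposition 4.3]{Cre92}, which reduces the theorem to showing that any $E\in \mathrm{Isoc}^\dagger(X/K)$ with finite monodromy injects into an isocrystal admitting a Frobenius structure. That injection is produced by a chain of reductions (Lemmas \ref{lemma: pi0cr} and \ref{lemma: pi0cr2}, using Theorem \ref{theo: lefschetz} to pass to a curve), followed by a lift to characteristic $0$, comparison with complex monodromy, and de Jong's alterations to find an honest finite cover in characteristic $p$ whose pushed-forward structure isocrystal is an $F$-isocrystal containing $E$. The Frobenius structure is what allows Crew's argument to descend an \'{e}tale cover from the generic fibre back to $X$.

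The gap in your argument is precisely the step you flag as the ``main obstacle'': the claim that ``by relative rigid analytic GAGA $A_G$ algebrises to a finite flat $X$-scheme $\pi\colon\widetilde{X}\to X$''. GAGA for a proper rigid space $]X[ = \mathfrak{X}_K^{\mathrm{an}}$ produces a coherent algebra on the algebraic generic fibre $\mathfrak{X}_K$ over $K$, not a scheme over $X$ in characteristic $p$. Your local triviality argument correctly shows that $\mathrm{Spec}(A_G)\to ]X[$ is \'{e}tale, hence corresponds to a finite \'{e}tale cover of $\mathfrak{X}_K$. But a finite \'{e}tale cover of $\mathfrak{X}_K$ need not extend to a finite \'{e}tale cover of the formal scheme $\mathfrak{X}$: the specialisation map $\pi_1^{\et}(\mathfrak{X}_{\bar{K}})\to\pi_1^{\et}(X_{\bar{k}})$ is surjective but typically far from injective on $p$-parts, and covers in the kernel do not descend. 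Nothing in your argument uses the convergence condition (beyond restriction to tubes of closed points of $X$) to force the $\cur{O}_{]X[}$-algebra $A_G$ to have a finite \'{e}tale integral model over $\mathfrak{X}$, and supplying that is exactly the content of the hard step. Crew's proof supplies it by exploiting the Frobenius; the present paper sidesteps the problem by producing the Frobenius. Your proposed ``alternative'' route via hyperplane sections does not help either, since it ultimately falls back on the same unproved claim in the one-dimensional case. Unless you can show that a convergent isocrystal which is a finite \'{e}tale algebra object with locally trivial fibres gives rise to a finite \'{e}tale cover of $X$ itself --- which in the absence of Frobenius appears to be a statement of comparable depth to the theorem being proved --- the argument as written does not close.
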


We can translate this into Tannakian terms as follows. Let $E\in \mathrm{Isoc}^\dagger(X/K)$, and consider the associated monodromy representation 
\[ \pi_1^\dagger(X,x)\rightarrow \mathrm{GL}(E_x). \]
By definition the image of this homomorphism is the monodromy group $\mathrm{DGal}_x(E)$ of $E$. Then Theorem \ref{theo: pi0} amounts to the claim that if $E\in \mathrm{Isoc}^\dagger(X/K)$ has \emph{finite} monodromy group, then it is trivialised by some finite \'etale cover $f:Y\rightarrow X$. Note that by \cite[Proposition 4.3]{Cre92} it suffices to show that we can find some injection $E\hookrightarrow E'$ where $E'$ has a Frobenius structure. The point is that we can now use Theorem \ref{theo: lefschetz} to reduce this claim to the case of curves.

\begin{lemma} \label{lemma: pi0cr} To prove Theorem \ref{theo: pi0} it suffices to treat the case when $X$ is a curve. 
\end{lemma}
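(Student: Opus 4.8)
The plan is to reduce from the case of a general smooth projective $X$ to the case of a curve by cutting down with hyperplane sections, exactly as in the reduction strategy used for Theorem \ref{theo: main2} itself, but now exploiting the \emph{full} Lefschetz surjectivity statement of Theorem \ref{theo: lefschetz} rather than just its weak normal-closure version. Following the remark after Theorem \ref{theo: pi0}, it suffices to show: if $E\in \mathrm{Isoc}^\dagger(X/K)$ has finite monodromy group $\mathrm{DGal}_x(E)$, then there is an injection $E\hookrightarrow E'$ with $E'$ carrying a Frobenius structure; for by \cite[Proposition 4.3]{Cre92} this then forces $E$ to be trivialised by a finite étale cover, which is what the isomorphism $\pi_0(\pi_1^\dagger(X,x))\cong \pi_1^\et(X,x)$ amounts to.

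First I would choose, after possibly enlarging $k$, a chain of smooth hyperplane sections $X = X_0 \supset X_1 \supset \cdots \supset X_{d-1} = C$ with $\dim X_i = d-i$, each $X_i$ smooth, projective and geometrically connected, $C$ a smooth projective curve, and all passing through (a point lying over) $x$; the existence of such a chain is a standard Bertini argument. Applying Theorem \ref{theo: lefschetz} repeatedly, each restriction map
\[ \pi_1^\dagger(X_{i+1},x)\rightarrow \pi_1^\dagger(X_i,x) \]
is surjective, hence the composite $\pi_1^\dagger(C,x)\rightarrow \pi_1^\dagger(X,x)$ is surjective. In Tannakian terms this says that the restriction functor $\mathrm{Isoc}^\dagger(X/K)\rightarrow \mathrm{Isoc}^\dagger(C/K)$ is fully faithful with image closed under subobjects, and moreover the monodromy group of $E|_C$ equals $\mathrm{DGal}_x(E)$ — in particular it is still finite. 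Now assume the theorem is known for curves: then $E|_C$ embeds into some $E'_C\in \mathrm{Isoc}^\dagger(C/K)$ admitting a Frobenius structure; equivalently $E|_C$ is a subobject of some object in the image of $F\text{-}\mathrm{Isoc}(C/K)\to \mathrm{Isoc}^\dagger(C/K)$, so (using \cite[Proposition 4.3]{Cre92} on the curve) $E|_C$ is trivialised by a finite étale cover of $C$.

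The final step is to propagate this back up to $X$. Since $E|_C$ has finite monodromy, its monodromy representation factors through a finite quotient of $\pi_1^\dagger(C,x)$; because $\pi_1^\dagger(C,x)\twoheadrightarrow \pi_1^\dagger(X,x)$ is surjective and $\mathrm{DGal}_x(E|_C)=\mathrm{DGal}_x(E)$, the monodromy representation of $E$ itself factors through the \emph{same} finite group $\Gamma$, a finite quotient of $\pi_1^\dagger(X,x)$. By the case of curves applied on $C$, we know $\Gamma$ is a quotient of $\pi_1^\et(C,x)$, and therefore — again using surjectivity of $\pi_1^\dagger(C,x)\to\pi_1^\dagger(X,x)$ together with the compatibility of the étale-to-overconvergent functors with hyperplane sections — $\Gamma$ is a quotient of $\pi_1^\et(X,x)$; hence the finite cover of $X$ corresponding to $\ker(\pi_1^\et(X,x)\to\Gamma)$ trivialises $E$, which is exactly the statement that $E$ lies in the image of $\mathrm{\acute{E}t}(X)$. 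This completes the reduction. The main obstacle I anticipate is book-keeping around base-points and the compatibility of all the relevant functors (the étale-to-isocrystal functor, restriction to hyperplane sections, and the identification of monodromy groups under these) so that the finite quotient through which $E$'s monodromy factors really is seen to descend to a quotient of $\pi_1^\et(X,x)$ rather than just of $\pi_1^\dagger(X,x)$; this is where one genuinely uses Theorem \ref{theo: lefschetz} (surjectivity on $\pi_1^\dagger$) as opposed to the weaker Theorem \ref{theo: weaklef}.
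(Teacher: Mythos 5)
Your first two steps match the paper's approach closely: cut down to a smooth projective curve $C\subset X$ through $x$ by iterated hyperplane sections, use Theorem~\ref{theo: lefschetz} to see that restriction $\mathrm{Isoc}^\dagger(X/K)\to\mathrm{Isoc}^\dagger(C/K)$ is fully faithful with image closed under subobjects (so that $\mathrm{DGal}_x(E|_C)=\mathrm{DGal}_x(E)$), then invoke the curve case via the Crew reformulation ``finite monodromy $\Leftrightarrow$ embeds into an $F$-isocrystal''. The gap appears in your final paragraph. From the curve case you know that $\Gamma:=\mathrm{DGal}_x(E)$ is a (continuous) quotient of $\pi_1^\et(C,x)$, and you know $\pi_1^\dagger(C,x)\twoheadrightarrow\pi_1^\dagger(X,x)$; you then assert that $\Gamma$ is therefore a quotient of $\pi_1^\et(X,x)$. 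But that does not follow. What you would need is that $\ker\bigl(\pi_1^\et(C,x)\twoheadrightarrow\pi_1^\et(X,x)\bigr)$ is contained in $\ker\bigl(\pi_1^\et(C,x)\to\Gamma\bigr)$, i.e.\ that the finite \'etale cover of $C$ trivialising $E|_C$ extends to a finite \'etale cover of $X$. Surjectivity of $\pi_1^\dagger(C,x)\to\pi_1^\dagger(X,x)$ gives you no control over $\ker\bigl(\pi_1^\et(C,x)\to\pi_1^\et(X,x)\bigr)$ (which is nontrivial already when $\dim X=2$), and the ``compatibility of the \'etale-to-overconvergent functors'' you invoke would only close the gap if you already knew that $\pi_0(\pi_1^\dagger(X,x))\to\pi_1^\et(X,x)$ is injective --- which is precisely the content of Theorem~\ref{theo: pi0}. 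So as written the argument is circular at the final step; this is not mere book-keeping.

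The paper avoids this by staying entirely inside the Frobenius-embedding reformulation you yourself set up in the first paragraph. One first reduces to $E$ simple (finite groups are reductive, so $E$ is semi-simple; then treat each simple summand). With $E|_C\hookrightarrow E'$ and $\varphi$ the Frobenius on $E'$, one forms $E''=\sum_{n\geq 0}(F^n)^*E|_C\subset E'$, which is visibly $\varphi$-stable. Simplicity of $E$ (hence of each $(F^n)^*E|_C$, since $F^*$ is an equivalence by \cite[Corollary 4.10]{Ogu84}) forces $E''\cong\bigoplus_i(F^{n_i})^*E|_C$ to be a \emph{finite} direct sum. The crucial point is that each $(F^{n_i})^*E|_C$ is the restriction to $C$ of the overconvergent isocrystal $(F^{n_i})^*E$ on $X$, so $E''$ manifestly extends to an object $\bigoplus_i(F^{n_i})^*E$ on $X$; the Frobenius structure on $E''$ and the inclusion $E|_C\hookrightarrow E''$ then extend to $X$ by the full faithfulness already furnished by Theorem~\ref{theo: lefschetz}. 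This hands you an injection of $E$ into an $F$-isocrystal on $X$, and you conclude by \cite[Proposition 4.3]{Cre92} as you intended. In short: rather than trying to descend the finite \'etale cover of $C$ to $X$ (which is where your argument breaks), you should descend the \emph{embedding into an $F$-isocrystal}, after first shrinking the target to a sum of Frobenius twists of $E|_C$ so that it is forced to be defined over $X$.
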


\begin{proof}
First of all, since finite groups are reductive, we may assume that $E$ is semi-simple (as an object of $\mathrm{Isoc}^\dagger(X/K)$), and therefore moreover simple. By Theorem \ref{theo: lefschetz} $E$ remains simple upon restriction to some iterated smooth hyperplane section $C\subset X$ of dimension 1, and the monodromy groups coincide. So applying the result for curves, we obtain some $E|_C\hookrightarrow E'$ with $E'$ admitting a Frobenius structure $\varphi$.

For all $n\geq 0$ we can therefore view the Frobenius pullback $(F^n)^*E|_C$ as lying inside $E'$ via $\varphi^n$. Let $E''\subset E'$ denote the sum of all the $(F^n)^*E|_C$ for $n\geq 0$, this is therefore stable by the Frobenius $\varphi$ of $E'$. By simplicity of $E$, and therefore of $(F^n)^*E|_C$ (since $F^*$ is an equivalence of categories by \cite[Corollary 4.10]{Ogu84}) we can see that we can write $E''\cong \bigoplus_i (F^{n_i})^*E|_C$. Hence $E''$ extends to $X$, and again applying Theorem \ref{theo: lefschetz} we can see that both the Frobenius structure on $E''$ and the inclusion $E|_C\hookrightarrow E''$ also extend to $X$, so we are done.
\end{proof}

In fact, arguing similarly we can make another reduction.

\begin{lemma} \label{lemma: pi0cr2} To prove Theorem \ref{theo: pi0} it suffices to show that whenever $C$ is a smooth, projective, connected curve, and $E\in \mathrm{Isoc}^\dagger(C/K)$ is simple with finite monodromy, then there exists some smooth connected curve $C'$, and a non-constant morphism $C'\rightarrow C$ such that $E|_{C'}$ is trivial.
\end{lemma}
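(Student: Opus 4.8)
The plan is to take the given non-constant morphism $h\colon C'\to C$, reduce it to a finite, generically \'etale morphism of smooth \emph{projective} curves, and then upgrade the triviality of $h^*E$ over the \'etale locus of $h$ to the existence of an overconvergent isocrystal on all of $C$, carrying a Frobenius structure and containing $E$. By \cite[Proposition 4.3]{Cre92} this is exactly what is needed, for curves, to conclude Theorem \ref{theo: pi0}, and Lemma \ref{lemma: pi0cr} then handles the general case.

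First I would reduce the morphism. Replacing $C'$ by its smooth compactification, $h$ extends to a morphism from the compactification to $C$ by properness; this extension is automatically finite (a non-constant morphism of smooth projective curves), and $h^*E$ --- trivial on the dense open $C'$ --- remains trivial on the compactification by full faithfulness of restriction \cite[Theorem 5.2.1]{Ked07}. So we may assume $C'$ is smooth, projective and connected and that $h$ is finite. Factoring $h=h_{\mathrm s}\circ h_{\mathrm i}$ into a purely inseparable morphism $h_{\mathrm i}\colon C'\to C_1$ followed by a separable one $h_{\mathrm s}\colon C_1\to C$, and using that over the perfect field $k$ a purely inseparable morphism of smooth curves absorbs a power of the absolute Frobenius --- an auto-equivalence of $\mathrm{Isoc}^\dagger(-/K)$ by \cite[Corollary 4.10]{Ogu84} --- the functor $h_{\mathrm i}^*$ is an equivalence, so $h_{\mathrm s}^*E$ is trivial on $C_1$. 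Replacing $C'$ by $C_1$ we may thus also assume $h$ is separable, hence \'etale over a dense open $U\subset C$.

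Now set $\mathcal G:=(h^{-1}(U)\to U)_*\mathcal O^\dagger_{h^{-1}(U)}$. Since $h^{-1}(U)\to U$ is finite \'etale, $\mathcal G$ is an overconvergent isocrystal on $U$ with a natural Frobenius structure, and as $E|_{h^{-1}(U)}$ is trivial, adjunction produces a nonzero, hence (since $E|_U$ is again simple, by \cite[Theorem 5.2.1, Proposition 5.3.1]{Ked07}) injective, map $E|_U\hookrightarrow\mathcal G$ in $\mathrm{Isoc}^\dagger(U/K)$. I would then copy the argument of Lemma \ref{lemma: pi0cr}: using the Frobenius structure on $\mathcal G$, form $E'':=\sum_{n\ge 0}(F^n)^*(E|_U)\subseteq \mathcal G$, which is stable under Frobenius; since each $(F^n)^*(E|_U)$ is simple (Ogus again) and $\mathcal G$ has finite rank, one gets $E''\cong\bigoplus_i (F^{n_i})^*(E|_U)$ for finitely many $n_i$. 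Because the absolute Frobenius commutes with the open immersion $U\hookrightarrow C$, each summand is the restriction of $(F^{n_i})^*E\in\mathrm{Isoc}^\dagger(C/K)$, so $E''$ is the restriction of $\widetilde E:=\bigoplus_i(F^{n_i})^*E$. By full faithfulness of restriction the inclusion $E|_U\hookrightarrow E''$ comes from an injection $E\hookrightarrow\widetilde E$, and the Frobenius structure $F^*E''\cong E''$ over $U$ extends to an isomorphism $F^*\widetilde E\cong\widetilde E$, so $\widetilde E$ carries a Frobenius structure. Thus $E$ embeds into an overconvergent $F$-isocrystal on $C$, which by \cite[Proposition 4.3]{Cre92} proves Theorem \ref{theo: pi0} for curves, and hence by Lemma \ref{lemma: pi0cr} in general.

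The one genuinely non-formal ingredient is the interplay between $C$ and the \'etale locus $U$ of $h$: namely that $\mathrm{Isoc}^\dagger(C/K)\to\mathrm{Isoc}^\dagger(U/K)$ is fully faithful (with image stable under subobjects) and compatible with Frobenius pullback, so that a Frobenius structure and a subobject visible only over $U$ can be transported back to all of $C$. I expect this --- together with checking that the purely inseparable reduction really does leave $h$ generically \'etale --- to be the only point requiring care; the remaining steps, in particular the ``$\sum(F^n)^*$'' manoeuvre, are essentially verbatim from the proof of Lemma \ref{lemma: pi0cr}.
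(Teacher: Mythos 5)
Your proof takes essentially the same route as the paper's: compactify $C'$, pass to the locus over which $h$ is finite \'etale, push forward the structure sheaf to obtain an $F$-isocrystal containing $E|_U$ as a subobject, and then run the $\sum_n (F^n)^*$ argument of Lemma \ref{lemma: pi0cr} to extend everything from $U$ back to $C$. The one place you say more than the paper does is the explicit factorization of $h$ into its separable and purely inseparable parts: the paper simply ``throws away the branch locus,'' which tacitly assumes $h$ is generically \'etale, whereas your Frobenius-absorption step makes that reduction honest --- a small but genuine tightening, since the $C'\to C$ produced in the proof of Theorem \ref{theo: pi0} need not a priori be separable.
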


\begin{proof} First of all, by extending $C'\rightarrow C$ to the compactification $\overline{C}'\rightarrow C$ and applying the fact that $\pi_1^\dagger(C')\rightarrow \pi_1^\dagger(\overline{C}')$ is surjective we may assume that $C'$ is finite over $C$. Now by throwing away the branch locus we may instead assume that $C'\overset{f}{\rightarrow} U \rightarrow C$ is finite \'etale over some non-empty open $U\subset C$. It follows that $E|_U \hookrightarrow f_*\cur{O}_{C'/K}^\dagger$ is a sub-object of something admitting a Frobenius structure. Now since $\pi_1^\dagger(C)\rightarrow \pi_1^\dagger(U)$ is surjective we may argue exactly as in Lemma \ref{lemma: pi0cr} above.
\end{proof}

\begin{remark} We have used in the proof the fact that for a smooth, connected affine curve $C$, the Frobenius pullback $F^*:\mathrm{Isoc}^\dagger(C/K)\rightarrow \mathrm{Isoc}^\dagger(C/K)$ is an equivalence of categories. To see this, we note that the tube of a subscheme is a topological invariant (i.e. does not change under nilpotent thickenings) so we may simply apply the proof of \cite[Corollary 4.10]{Ogu84}. 
\end{remark}

We can now complete the proof of Theorem \ref{theo: pi0}.

\begin{proof}[Proof of Theorem \ref{theo: pi0}] Let us suppose that we have $C$ a smooth projective connected curve and $E\in \mathrm{Isoc}^\dagger(C/K)$ with finite monodromy. Choose a lift $\cur{C}$ of $C$ to $\cur{V}$, with generic fibre $\cur{C}_K$ a smooth, projective, geometrically connected curve over $K$. Then $E$ corresponds to a module with integrable connection on $\cur{C}_K$.

Since $\pi_1^\dR(\cur{C}_K) \rightarrow \pi_1^\dagger(C)$ is surjective, it follows that $E$ has finite monodromy when considered as a representation of $\pi_1^\dR(\cur{C}_K)$. Therefore by comparison with the complex situation we know that there exists a finite \'etale cover $\cur{C}'_K\rightarrow \cur{C}_K$ trivialising $E$ as a module with integrable connection. Let $\cur{C}'$ denote the normalisation of $\cur{C}$ inside the function field extension $K(\cur{C}_K)\rightarrow K(\cur{C}'_K)$.

By de Jong's theorem on alterations \cite[Theorem 8.2]{dJ96} we can find, after possibly increasing $K$ (this does not change the problem), some alteration $\cur{C}''\rightarrow \cur{C}'$ with strictly semistable reduction, let $C''$ denote the special fibre. Then the map from the smooth locus of $C''$ to $C$ is dominant, so we may choose some connected component $C'''$ of $\mathrm{sm}(C'')$ which is non-constant over $C$. Then the formal completion $\widehat{\cur{C}}''$ is smooth over $\cur{V}$ in a neighbourhood of $C'''$, and so the pull-back of isocrystals along $C'''\rightarrow C$ can be identified with the pull-back of modules with integrable connection along
\[  ]C'''[_{\widehat{\cur{C}}''} \subset {\cur{C}_K''}^\mathrm{an}\rightarrow{\cur{C}_K'}^\mathrm{an} \rightarrow \cur{C}_K^\mathrm{an}. \]
In particular, the pull-back of $E$ to $C'''$ as an overconvergent  isocrystal is trivial, and so we may apply Lemma \ref{lemma: pi0cr2}. 
\end{proof}

\providecommand{\bysame}{\leavevmode\hbox to3em{\hrulefill}\thinspace}
\providecommand{\MR}{\relax\ifhmode\unskip\space\fi MR }
\providecommand{\MRhref}[2]{%
  \href{http://www.ams.org/mathscinet-getitem?mr=#1}{#2}
}
\providecommand{\href}[2]{#2}

\end{document}